\documentclass[10pt, a4paper]{amsart}
\usepackage{a4wide}

\usepackage[utf8]{inputenc}

\usepackage{amsfonts, amstext, amsmath, amsthm, amscd, amssymb}
\usepackage{mathrsfs}
\usepackage{mathtools}
\usepackage{tikz}
\usepackage{url}
\usepackage[all]{xypic}

\usepackage{multirow,arydshln}

\usepackage{color}
\usepackage[colorlinks,
    linkcolor={red!50!black},
    citecolor={blue!50!black},
    urlcolor={blue!80!black}]{hyperref}

\usepackage{enumitem}

\renewcommand{\setminus}{{\smallsetminus}}

\newcommand{\bp}{\begin{pmatrix}}
\newcommand{\ep}{\end{pmatrix}}
\newcommand{\be}{\begin{equation}}
\newcommand{\ee}{\end{equation}}
\newcommand{\ol}[1]{\overline{#1}}

\newcommand{\smfrac}[2]{\mbox{\footnotesize$\displaystyle\frac{#1}{#2}$}} 

\numberwithin{equation}{section}

\theoremstyle{plain}
\newtheorem{theorem}[equation]{Theorem}
\newtheorem{lemma}[equation]{Lemma}
\newtheorem{proposition}[equation]{Proposition}

\newtheorem{corollary}[equation]{Corollary}

\newtheorem*{claim*}{Claim}

\theoremstyle{definition}

\newtheorem{definition}[equation]{Definition}
\newtheorem{construction}[equation]{Construction}

\theoremstyle{remark}

\newtheorem{remark}[equation]{Remark}
\newtheorem*{ack}{Acknowledgements}

\numberwithin{equation}{section}



\def\Z{\mathbb Z}
\def\R{\mathbb R}
\def\Q{\mathbb Q}

\def\wt#1{\widetilde{#1}}

\def\sm{\setminus}
\def\S{\Sigma}

\def\La{\Lambda}

\def\bp{\begin{pmatrix}}
\def\ep{\end{pmatrix}}
\def\ba{\begin{array}}
\def\ea{\end{array}}
\def\bn{\begin{enumerate}}
\def\en{\end{enumerate}}

\DeclareMathOperator\lk{lk}
\DeclareMathOperator\Wh{Wh}
\DeclareMathOperator\Ext{Ext}
\DeclareMathOperator\Tor{Tor}
\DeclareMathOperator\Hom{Hom}

\DeclareMathOperator\ord{ord}

\DeclareMathOperator\Bl{Bl}

\DeclareMathOperator\pt{pt}

\def\ga{{g_{\rm{alg}}}}
\def\gs{{g_\Z^{\rm{sh}}}}


\begin{document}

\title{The $\Z$-genus of boundary links}

\author{Peter Feller}
\address{ETH Zurich, R\"amistrasse 101, 8092 Zurich, Switzerland}
\email{peter.feller@math.ch}

\author{JungHwan Park}
\address{Georgia Institute of Technology, Atlanta, GA, USA}
\email{junghwan.park@math.gatech.edu }

\author{Mark Powell}
\address{Department of Mathematical Sciences, Durham University, UK}
\email{mark.a.powell@durham.ac.uk}



\begin{abstract}
The $\Z$-genus of a link $L$ in $S^3$ is the minimal genus of a locally flat, embedded, connected surface in $D^4$ whose boundary is $L$ and with the fundamental group of the complement infinite cyclic.
We characterise the $\Z$-genus of boundary links in terms of their single variable Blanchfield forms, and we present some applications. In particular, we show that a variant of the shake genus of a knot, the $\Z$-shake genus, equals the $\Z$-genus of the knot.
\end{abstract}

\maketitle

\section{Introduction}

A link in $S^3$ is an oriented 1-dimensional locally flat submanifold of $S^3$ homeomorphic to a nonempty disjoint union of 
circles. For a link $L$, let $X_L:= S^3 \sm \nu L$ be the link exterior and let $M_L$ be the result of $0$-framed surgery on $L$.
An $r$-component link $L = L_1 \cup \cdots \cup L_r$ in $S^3$ is a \emph{boundary link} if the components bound a collection of $r$ mutually disjoint Seifert surfaces in~$S^3$, or equivalently if there is an epimorphism $\pi_1(X_L) \twoheadrightarrow F_r$ onto the free group on $r$ generators, sending the oriented meridian of $L_i$ to the $i$th generator of~$F_r$.

Throughout the article we write $\Lambda:= \Z[t,t^{-1}]$ for the Laurent polynomial ring. Let $\phi \colon \Z^r \to \Z$ be the homomorphism that sends $e_i \to 1$ for each standard basis vector $e_i$ of $\Z^r$. Let $L$ be an {$r$-component} link with vanishing pairwise linking numbers. Use the compositions $\pi_1(X_L) \to H_1(X_L;\Z) \xrightarrow{\cong} \Z^r \xrightarrow{\phi}\Z$ and $\pi_1(M_L) \to H_1(M_L;\Z) \xrightarrow{\cong} \Z^r \xrightarrow{\phi}\Z$ to define the homology of $X_L$ and $M_L$ with $\La$ coefficients. Here the middle isomorphisms send each positive meridian to a different basis element $e_i$. The module $H_1(X_L;\La)$ is called the (single variable) \emph{Alexander module} of $L$.  For~$L$ a boundary link, as we show in Lemma~\ref{lem:XL-vs-ML} the Alexander module is canonically isomorphic to $H_1(M_L;\La)$. We will consider the Blanchfield pairing on $H_1(M_L;\La)$, which is technically simpler since $M_L$ is a closed $3$-manifold.

\begin{definition}
A \emph{$\Z$-surface} for a link $L$ is a locally flat, embedded, compact, oriented, connected surface in the $4$-ball $D^4$ whose boundary coincides with $L$ as oriented submanifolds of $S^3$.
The \emph{genus} of a connected surface $\Sigma$ with $m \geq 1$ boundary components is $\smfrac{1}{2}(2-\chi(\Sigma)-m)$. The \emph{$\Z$-genus} of a link $L$ is the minimal genus amongst all $\Z$-surfaces for $L$.
We say a link is \emph{$\Z$-weakly slice} if its $\Z$-genus is zero.
\end{definition}

We algebraically characterise the $\Z$-genus of boundary links, extending work of the first author with Lewark~\cite[Theorem 1.1]{FellerLewark_19} on the knot case.

\begin{theorem}\label{thm:main}
Let $L$ be an $r$-component boundary link and $M_L$ be the $0$-framed surgery on $L$. Then the following are equivalent.
\begin{enumerate}[font=\upshape]
  \item\label{item:main-1} The link $L$ bounds a $\Z$-surface of genus $g$.
  \item\label{item:main-2}
There is a size $2g$ Hermitian square matrix $A$ over $\La$ such that  $A(1)$ has signature $0$
and such that $A$ presents the Blanchfield form of $M_L$ on $TH_1(M_L;\La)$.
\item\label{item:main-3} There exists an embedding of the connected, oriented, genus $g$ surface with $(r+1)$ boundary components into $S^3$ such that $r$ of the boundary components coincide with $L$, and the final boundary component is a knot with Alexander polynomial~$1$.
\end{enumerate}
\end{theorem}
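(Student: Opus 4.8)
The plan is to prove the cyclic chain of implications $(\ref{item:main-3}) \Rightarrow (\ref{item:main-1}) \Rightarrow (\ref{item:main-2}) \Rightarrow (\ref{item:main-3})$, which is the standard shape for such genus characterisations and follows the template of Feller--Lewark in the knot case, with the boundary-link bookkeeping handled via Lemma~\ref{lem:XL-vs-ML}.

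\textbf{From $(\ref{item:main-3})$ to $(\ref{item:main-1})$.}  Given the embedded genus $g$ surface $P \subset S^3$ with $r+1$ boundary components, where $r$ of them form $L$ and the last is a knot $K$ with $\Delta_K = 1$, the idea is that $K$ bounds a locally flat disc $D$ in $D^4$ with $\pi_1(D^4 \setminus D) \cong \Z$: this is Freedman's theorem that Alexander polynomial one knots are topologically slice (with good fundamental group), and in fact one gets the complement to have infinite cyclic $\pi_1$.  Push the interior of $P$ slightly into $D^4$, keeping $L$ fixed on $S^3$, and glue on $D$ along $K$.  The result is a locally flat embedded connected surface $\Sigma$ in $D^4$ with boundary $L$ and genus $g$ (capping off one boundary component of a genus $g$, $(r+1)$-boundary surface with a disc gives a genus $g$, $r$-boundary surface).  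A Mayer--Vietoris / van Kampen argument, using that $P$ deformation retracts appropriately and that $H_1$ of the complement is detected by meridians, shows $\pi_1(D^4 \setminus \Sigma) \cong \Z$; here one uses that the complement of $P$ pushed in is homotopy equivalent to the complement of $P$ in $S^3$, whose abelianisation is controlled since the linking numbers vanish, and that gluing in $D$ with infinite cyclic complement does not introduce new $\pi_1$.  Thus $\Sigma$ is a $\Z$-surface of genus $g$.

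\textbf{From $(\ref{item:main-1})$ to $(\ref{item:main-2})$.}  Suppose $\Sigma \subset D^4$ is a $\Z$-surface of genus $g$ for $L$.  Let $W$ be the exterior of $\Sigma$, or rather the $4$-manifold obtained from $D^4 \setminus \nu\Sigma$ by doing appropriate surgery so that its boundary is $M_L$; since $\pi_1(D^4\setminus\Sigma)\cong\Z$ the relevant $\Z$-homology is just ordinary homology with $\La$-coefficients.  A standard half-lives-half-dies argument on the intersection form of $H_2(W;\La)$ together with the long exact sequence of the pair $(W, M_L)$ and Poincar\'e--Lefschetz duality produces a Hermitian matrix $A$ over $\La$ presenting the Blanchfield form of $M_L$ on its $\La$-torsion submodule $TH_1(M_L;\La)$; this is the algebraic content of Blanchfield-form-presentation results (cf.\ Borodzik--Friedl, Powell) applied in the $\Z$-cover setting.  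The size of $A$ is $2g$: one counts that $H_2(W;\La)$, or the relevant summand, has $\La$-rank equal to the first Betti number of $\Sigma$, which is $2g$ since $\Sigma$ is connected with $r$ boundary components and we have also used that $b_1$ of the $r$-boundary genus $g$ surface is $2g + r - 1$, with the extra $r-1$ absorbed by the link-theoretic part of $M_L$ (this is exactly where vanishing linking numbers and the boundary-link hypothesis via Lemma~\ref{lem:XL-vs-ML} enter).  Evaluating at $t=1$, the matrix $A(1)$ is congruent to the intersection form of the $\Z$-coefficient capped-off exterior, which is that of a surface pushed into $D^4$ and hence metabolic/zero; so $\sigma(A(1)) = 0$.

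\textbf{From $(\ref{item:main-2})$ to $(\ref{item:main-3})$.}  This is the reverse-engineering step and I expect it to be the main obstacle.  Given a Hermitian $2g \times 2g$ matrix $A$ over $\La$ with $\sigma(A(1)) = 0$ presenting the Blanchfield form, we must build the embedded surface $P \subset S^3$ realising $(\ref{item:main-3})$.  The strategy is to realise $A$ as (a stabilisation of) a Seifert-type pairing: since $A(1)$ has signature zero, after stabilising by hyperbolics one can find a matrix $V$ over $\La$ with $A = (1-t)V + (1-t^{-1})V^{T}$, up to the allowed equivalence moves, such that $V + V^{T}$ is a unimodular even symmetric form of signature zero over $\Z$, i.e.\ congruent to a sum of hyperbolics.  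Such a $V$ is the Seifert matrix of an embedded surface: concretely one builds a disjoint union of Seifert surfaces for the $L_i$ (using that $L$ is a boundary link) and bands/plumbings realising the entries of $V$; the condition on $V + V^T$ lets one add one further boundary component, obtained by surgering along a suitable basis, which bounds and has trivial Alexander polynomial precisely because the corresponding block is hyperbolic.  The bookkeeping to get exactly genus $g$ and exactly $r+1$ boundary components, and to match the Blanchfield form on the nose rather than just stably, is the delicate part; here one invokes the classification of Blanchfield pairings and their presentations (the uniqueness statement underlying the equivalence of matrices presenting a fixed form) to absorb the stabilisations, mirroring \cite[proof of Theorem 1.1]{FellerLewark_19} but carried out one variable at a time and component by component for the boundary link.
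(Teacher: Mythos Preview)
Your chain of implications and the argument for $(\ref{item:main-3})\Rightarrow(\ref{item:main-1})$ match the paper. The other two implications diverge from the paper's approach, and in each case your version has a genuine gap.

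\medskip

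\textbf{On $(\ref{item:main-2})\Rightarrow(\ref{item:main-3})$.} You try to build the surface $P\subset S^3$ directly from the matrix $A$, essentially redoing the Feller--Lewark construction component by component, and you flag the ``bookkeeping to get exactly genus $g$'' as delicate without resolving it. The paper avoids this entirely. It first takes a collection of disjoint Seifert surfaces for $L$, tubes them together, and lets $K_L$ be a separating curve on the result---equivalently, $K_L$ is obtained from $L$ by $r-1$ internal band sums disjoint from the Seifert surfaces. A direct comparison of presentations (both come from the same Seifert matrix block $V$) shows that the Blanchfield form on $TH_1(M_L;\La)$ is isometric to $\Bl_{K_L}$. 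Now the hypothesis says this form is presented by a size $2g$ Hermitian $A$ with $\sigma(A(1))=0$, so the \emph{knot} theorem of Feller--Lewark applies to $K_L$ and produces a genus $g$ cobordism in $S^3$ from $K_L$ to an Alexander polynomial $1$ knot. Gluing this to the genus $0$ cobordism from $L$ to $K_L$ (the tubed planar piece) gives the desired surface. This reduction-to-knots manoeuvre is the key idea you are missing; your direct realisation approach would require reproving the knot case inside a more complicated setting.

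\medskip

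\textbf{On $(\ref{item:main-1})\Rightarrow(\ref{item:main-2})$.} Your outline is broadly correct in spirit, but the sentence ``the extra $r-1$ absorbed by the link-theoretic part of $M_L$'' hides the actual content. The intersection form on $H_2(W_P;\La)$ has rank $r-1+2g$, and one needs it to be represented by a matrix of block form $\begin{pmatrix}0&0\\0&A\end{pmatrix}$ with $A$ of size $2g$. The paper obtains this block structure \emph{geometrically}: for $j=1,\dots,r-1$ it uses the boundary-link Seifert surfaces $F_j$, pushed into a collar $S^3\times I$ at distinct levels, to surger the tori $\gamma_j\times S^1$ to closed surfaces $\Sigma_{F_j}$. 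Because these surfaces live in disjoint collar levels and the remaining $2g$ surfaces can be taken disjoint from the collar, all intersections involving the first $r-1$ basis elements vanish. This is precisely where the boundary-link hypothesis does real work (via what the paper calls a $\Z$-trivial surface system), and it is not supplied by Lemma~\ref{lem:XL-vs-ML} alone. Without this explicit construction, there is no reason for the $(r-1)\times(r-1)$ block to vanish, and hence no reason the presenting matrix has size $2g$ rather than $r-1+2g$.
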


Theorem~\ref{thm:main} shows that for boundary links, having a genus $g$ $\Z$-surface implies the existence of a  genus $g$ $\Z$-surface given by a $\Z$-disc in $D^4$ union a collection of 2-dimensional 1-handles attached along the boundary and embedded in $S^3$. It was conjectured in~\cite{Feller-Lewark:2018-1} that this is the case for all links,
in other words that \eqref{item:main-1}$\Leftrightarrow$ \eqref{item:main-3} for all links.
Towards tackling this conjecture, we ask: what is the appropriate generalization of \eqref{item:main-2} that applies for all links?
For example, in order to define the coefficient system on $M_L$ we used that the pairwise linking numbers vanish, so the formulation of Theorem~\ref{thm:main} does not apply to all links.

For the proof of Theorem~\ref{thm:main}, \eqref{item:main-3} $\Rightarrow$ \eqref{item:main-1} is a consequence of the fact that Alexander polynomial $1$ knots are $\Z$-slice~\cite[Theorem~11.7B]{Freedman-Quinn:1990-1}. For~\eqref{item:main-2} $\Rightarrow$ \eqref{item:main-3}, the proof consists of reducing the statement for links to the statement for knots by performing internal band sums. Finally, \eqref{item:main-1} $\Rightarrow$ \eqref{item:main-2} is an algebraic topology computation, involving the intersection pairing of a suitably constructed $4$-manifold with boundary $M_L$ and fundamental group $\Z$.


Noting that $H_1(M_L;\La) \cong H_1(X_L;\La)$ for $L$ a boundary link, as we show in Lemma~\ref{lem:XL-vs-ML}, we deduce the following corollary, which is a natural generalisation of the aforementioned result that a knot is $\Z$-slice if and only if its Alexander module is trivial.

\begin{corollary}\label{cor:weakly-slice-torsion-free}
 A boundary link is $\Z$-weakly slice if and only if it has torsion-free Alexander~module.
 \end{corollary}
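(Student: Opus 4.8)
The plan is to derive Corollary~\ref{cor:weakly-slice-torsion-free} directly from Theorem~\ref{thm:main} in the genus-zero case, $g=0$. Setting $g=0$ in the equivalence \eqref{item:main-1}$\Leftrightarrow$\eqref{item:main-2}, a boundary link $L$ is $\Z$-weakly slice if and only if there is a size $0$ Hermitian square matrix over $\La$ — i.e.\ the empty matrix — that presents the Blanchfield form on $TH_1(M_L;\La)$ (the signature condition on $A(1)$ being vacuous for the empty matrix). A form is presented by the empty matrix precisely when the form is on the zero module, so this says exactly that $TH_1(M_L;\La)=0$.

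Next I would translate the vanishing of the torsion submodule into the assertion that the Alexander module is torsion-free. This is immediate from the definition of $TH_1(M_L;\La)$ as the $\La$-torsion submodule of $H_1(M_L;\La)$: it is zero if and only if $H_1(M_L;\La)$ is torsion-free. Finally I would invoke Lemma~\ref{lem:XL-vs-ML}, which for a boundary link gives a canonical isomorphism $H_1(X_L;\La)\cong H_1(M_L;\La)$, to conclude that $H_1(M_L;\La)$ is torsion-free if and only if the Alexander module $H_1(X_L;\La)$ is torsion-free. Chaining these equivalences yields the corollary.

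The only genuinely non-formal point to address is that Theorem~\ref{thm:main} as stated presupposes vanishing pairwise linking numbers (so that the $\La$-coefficient systems are defined), and one should note that a boundary link automatically has vanishing pairwise linking numbers — indeed, disjoint Seifert surfaces for the components force all pairwise linking numbers to vanish — so Theorem~\ref{thm:main} and Lemma~\ref{lem:XL-vs-ML} do apply. I expect no real obstacle here; the main thing to be careful about is making the degenerate $g=0$ case of condition~\eqref{item:main-2} rigorous, namely that ``presented by a size $0$ matrix'' should be read as ``the torsion Blanchfield form is the zero form on the zero module'', which is the natural convention and consistent with the knot case recovered from \cite{FellerLewark_19}.
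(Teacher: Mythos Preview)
Your proposal is correct and matches the paper's approach: the paper simply remarks that the corollary follows from Theorem~\ref{thm:main} together with Lemma~\ref{lem:XL-vs-ML}, and you have spelled out exactly those steps (specialising to $g=0$ in \eqref{item:main-1}$\Leftrightarrow$\eqref{item:main-2}, interpreting the empty presentation as $TH_1(M_L;\La)=0$, and passing to the Alexander module via Lemma~\ref{lem:XL-vs-ML}).
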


\subsection{Applications}\label{subsec:applications}

We describe several applications, whose proofs will be given in Section~\ref{sec:proofs-applications}. The first application exhibits a phenomenon for links with unknotted components. This uses the obstruction in Corollary~\ref{cor:weakly-slice-torsion-free}: we compute that the Alexander modules of the links in question have nontrivial torsion submodules.

\begin{figure}[h]\label{fig:Ln}
\includegraphics[width=.6\textwidth]{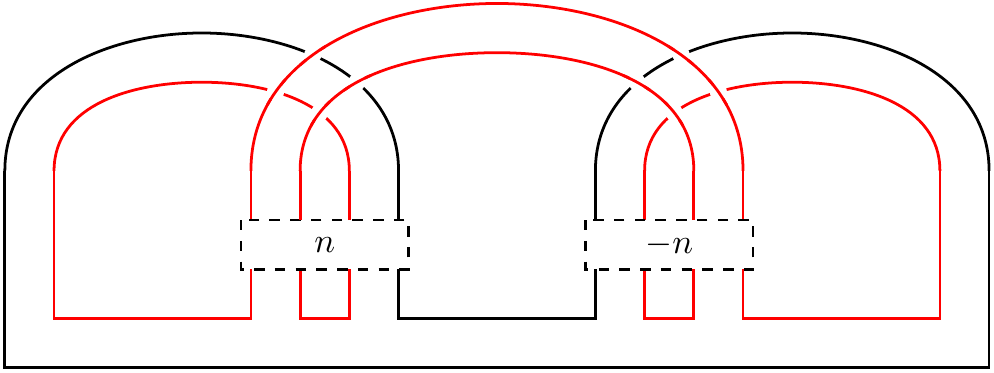}
\caption{The link $L_n$.  The dashed boxes indicate $\pm n$  full twists between the bands, without introducing any internal twisting to any of the bands.}
\end{figure}

\begin{corollary}
  The infinite family shown in Figure~\ref{fig:Ln} of $2$-component links $L_n$, where $n \neq 0,1$, are slice links, hence weakly slice, with unknotted components, but the $L_n$ are not $\Z$-weakly slice.
\end{corollary}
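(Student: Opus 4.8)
The plan is to verify the two positive claims about $L_n$ by direct inspection of Figure~\ref{fig:Ln}, and then to use Corollary~\ref{cor:weakly-slice-torsion-free} for the negative claim. First, I would observe that each $L_n$ is obtained from a fixed ``band pattern'' by inserting $\pm n$ full twists in the dashed boxes without internal twisting; since the two bands are parallel, the components remain unknotted for every $n$, which handles the assertion about unknotted components. For sliceness, I would exhibit an explicit set of slice disks: the two bands, together with the twisting regions, should be arranged so that after an isotopy the link becomes a boundary link that visibly bounds disjoint disks in $D^4$ (for instance, by finding a sequence of band moves turning $L_n$ into a split unlink, or by recognising $L_n$ as a ``homotopy-ribbon'' configuration). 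This part is routine hands-on link calculus and I would relegate the explicit moves to a figure.

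The crux is the last claim, that $L_n$ is not $\Z$-weakly slice for $n\neq 0,1$, and by Corollary~\ref{cor:weakly-slice-torsion-free} this reduces to showing that the single-variable Alexander module $H_1(X_{L_n};\Lambda)$ has nontrivial torsion submodule. To compute it, I would first check that $L_n$ is a boundary link (the two obvious disjoint Seifert surfaces coming from the bands), so that the single-variable Alexander module is defined and agrees with $H_1(M_{L_n};\Lambda)$. Then I would read a Seifert matrix $V$ off the genus-one (per component, or small total genus) Seifert surface that the band picture provides; the twisting parameter $n$ enters $V$ linearly. From $V$ one gets a presentation matrix $tV - V^{\transpose}$ (or $V - tV^{\transpose}$) for the Alexander module over $\Lambda$, and the order of the torsion submodule is governed by $\det(tV - V^{\transpose})$, the single-variable Alexander polynomial $\Delta_{L_n}(t)$. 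I expect $\Delta_{L_n}(t)$ to be a nonzero polynomial depending on $n$ — something like $\pm\big((n-1)t - n\big)\big(nt-(n-1)\big)$ up to units — which is non-trivial (i.e.\ not a unit in $\Lambda$) precisely when $n\neq 0,1$. Non-triviality of $\Delta_{L_n}$ forces the torsion submodule $TH_1(X_{L_n};\Lambda)$ to be nonzero, and then Corollary~\ref{cor:weakly-slice-torsion-free} gives that $L_n$ is not $\Z$-weakly slice.

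The main obstacle is the explicit homological computation: one must be careful that the ``obvious'' Seifert surfaces really are disjoint (so that $L_n$ is genuinely a boundary link and the single-variable theory applies), and one must correctly extract the $\Lambda$-module structure — not merely the Alexander polynomial — because $\Z$-weak sliceness is obstructed by the torsion submodule, and a priori the module could be torsion-free with a unit Alexander polynomial. I would address this by writing down the Seifert matrix explicitly from Figure~\ref{fig:Ln}, computing $tV-V^{\transpose}$, and identifying the cokernel; the factor of $\Lambda$ supporting torsion becomes visible as soon as $\det(tV-V^{\transpose})$ is a non-unit, and tracking the dependence on $n$ pins down exactly the excluded values $n=0,1$. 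Once the Alexander module is in hand, invoking Corollary~\ref{cor:weakly-slice-torsion-free} is immediate, and combining with the (elementary) sliceness and unknottedness observations completes the proof.
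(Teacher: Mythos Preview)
Your proposal is correct and follows essentially the same route as the paper: verify that $L_n$ is a boundary link, compute the Alexander module from the obvious Seifert surface via $tV-V^{T}$, identify the torsion as $\La/\langle (n-1)t-n\rangle \oplus \La/\langle nt-(n-1)\rangle$ (so your guessed polynomial is exactly right), and invoke Corollary~\ref{cor:weakly-slice-torsion-free}; for sliceness the paper is slightly more specific than you, pointing to a single saddle move along a band dual to the middle band.
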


\begin{proof} Let $n  \neq 0,1$ be an integer
and let  $L_n$ be the link in Figure~\ref{fig:Ln}. It is not too hard to check that $L_n$ is a boundary link.  Hence we get the following computation using the obvious Seifert surface:
$$H_1(M_{L_n};\La)\cong H_1(X_{L_n};\La)\cong \La \oplus \La /\langle (n-1)t-n\rangle \oplus \La/\langle nt-(n-1)\rangle.$$
By Corollary~\ref{cor:weakly-slice-torsion-free}, it follows that $L_n$ is not $\Z$-weakly slice. The fact that $L_n$ is slice can be seen by performing a saddle move corresponding to a dual band to the middle band in Figure~\ref{fig:Ln}.
\end{proof}

Recall that a knot $K$ is \emph{shake slice} if the generator of second homology of the 4-manifold \[X_0(K):= D^4 \cup_{K \times D^2} D^2 \times D^2,\] obtained by attaching a 2-handle to the closed 4-ball $D^4$ along $K$ with framing zero,  known as the \emph{$0$-trace} of $K$, can be represented by a locally flat embedded 2-sphere $S \subseteq X_0(K)$.
Moreover we say that a knot is \emph{$\Z$-shake slice} if in addition $\pi_1(X_0(K) \sm S) \cong \Z$, generated by a meridian of $S$. This notion was introduced in~\cite{FMNOPR}.

Extending this, the \emph{$\Z$-shake genus} of a knot $K$ is the minimal genus of a surface $\Sigma$ representing a generator of $H_2(X_0(K);\Z)$ with $\pi_1(X_0(K) \sm \Sigma) \cong \Z$, again generated by a meridian of $\Sigma$.  Theorem~\ref{thm:main} enables us to characterise this quantity.

\begin{theorem}\label{cor:shake}
For all knots, the $\Z$-genus equals the $\Z$-shake genus.
\end{theorem}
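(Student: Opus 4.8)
\textbf{Proof proposal for Theorem~\ref{cor:shake}.}

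The plan is to prove two inequalities. The easy direction is that the $\Z$-shake genus is at most the $\Z$-genus: given a $\Z$-surface $\Sigma$ in $D^4$ for $K$ of genus $g$, cap it off inside the $2$-handle $D^2\times D^2$ of $X_0(K)$ with the standard disc $\{0\}\times D^2$ (more precisely, the core of the $2$-handle) to obtain a closed surface $\widehat\Sigma$ in $X_0(K)$ of the same genus. A Mayer--Vietoris computation shows $\widehat\Sigma$ represents a generator of $H_2(X_0(K);\Z)\cong\Z$, and since the core disc has simply connected complement in the handle, a van Kampen argument across the splitting $X_0(K)=D^4\cup (D^2\times D^2)$ identifies $\pi_1(X_0(K)\sm\widehat\Sigma)$ with $\pi_1(D^4\sm\Sigma)\cong\Z$, with the meridian of $\widehat\Sigma$ a generator. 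Hence the $\Z$-shake genus of $K$ is at most $g$.

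For the reverse inequality I would start from a genus $g$ surface $\Sigma\subseteq X_0(K)$ representing a generator of $H_2(X_0(K);\Z)$ with $\pi_1(X_0(K)\sm\Sigma)\cong\Z$, and aim to conclude that $K$ bounds a $\Z$-surface of genus $g$ in $D^4$, after which Theorem~\ref{thm:main} (the equivalence \eqref{item:main-1}$\Leftrightarrow$\eqref{item:main-2} for the knot case, or directly the knot result of Feller--Lewark) is not even needed --- a genus $g$ $\Z$-surface is all we want. The key move is to make $\Sigma$ intersect the cocore $\{0\}\times D^2\subseteq D^2\times D^2$ of the $2$-handle transversely; since $\Sigma$ generates $H_2(X_0(K);\Z)$ and the algebraic intersection of this generator with the cocore is $\pm1$, the geometric intersection number is odd. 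The idea is to reduce this geometric intersection number to exactly one by tubing: whenever $\Sigma$ meets the cocore in two points of opposite sign, one can remove them by an ambient surgery (a finger move pushing $\Sigma$ along an arc in the cocore, which is a disc) at the cost of adding a tube, i.e.\ raising the genus. This is where the main obstacle lies: a naive tubing increases genus, which would ruin the genus count, so one must instead use that the complement $X_0(K)\sm\Sigma$ has abelian (indeed infinite cyclic) fundamental group to guide the tube so as not to increase genus --- essentially, one routes the tube to cancel pairs of intersection points while simultaneously using a Norman-trick / pushing-down-through-the-handle argument that converts the excess intersections with the cocore into nothing rather than into handles. I would model this on the standard passage between shake-slice and slice (the $0$-trace interpretation in \cite{FMNOPR} and classical work of Akbulut, and the analogous statement for the ordinary shake genus), carried out in the locally flat category, keeping track of the fundamental group condition throughout.

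Once $\Sigma$ is arranged to meet the cocore $\{0\}\times D^2$ in a single point transversely, $\Sigma$ meets the $2$-handle $D^2\times D^2$ in a single boundary-parallel disc (after an isotopy rel the rest of $\Sigma$), and removing this disc leaves a genus $g$ surface $\Sigma'$ in $D^4$ whose boundary is $K$ sitting in $S^3=\partial D^4$, i.e.\ a $\Z$-surface candidate. It remains to check $\pi_1(D^4\sm\Sigma')\cong\Z$: the complement $D^4\sm\Sigma'$ is obtained from $X_0(K)\sm\Sigma$ by removing an open tubular neighbourhood of the $2$-handle, i.e.\ by removing a $2$-handle's worth from the complement; a van Kampen argument shows that this can only quotient $\pi_1$, and the meridian of $\Sigma'$ still normally generates, so the group stays a quotient of $\Z$, while $H_1$ of the complement is $\Z$, forcing $\pi_1(D^4\sm\Sigma')\cong\Z$. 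Therefore $\Sigma'$ is a genus $g$ $\Z$-surface for $K$, giving $\Z$-genus $\le$ $\Z$-shake genus. Combining the two inequalities yields equality. The delicate points to write carefully are the local-flatness of all the surgeries and cappings (so the ambient handle manipulations are legitimate in the topological category), and the bookkeeping that the fundamental-group-$\Z$ hypothesis is both preserved and exploited at the step where geometric intersections with the cocore are reduced to one without creating genus.
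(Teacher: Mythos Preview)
Your hard direction has a genuine gap. The claim that one can use $\pi_1(X_0(K)\sm\Sigma)\cong\Z$ to reduce the geometric intersection of $\Sigma$ with the cocore to a single point \emph{without increasing genus} is not justified, and there is no ``analogous statement for the ordinary shake genus'' to appeal to: as the paper notes immediately after Theorem~\ref{cor:shake}, whether topological shake genus equals slice genus is open. The Norman trick trades intersections for genus, which is the wrong direction; finding an embedded Whitney disc for a cancelling pair, with interior disjoint from both $\Sigma$ and the cocore, is a genuine $4$-dimensional embedding problem, and the hypothesis on $\pi_1(X_0(K)\sm\Sigma)$ does not obviously control $\pi_1$ of the complement of $\Sigma\cup(\text{cocore})$. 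Moreover, even granting a single intersection point, your final van Kampen step is backwards: from $X_0(K)\sm\nu\Sigma=(D^4\sm\nu\Sigma')\cup(\text{handle}\sm\text{core})$ one obtains $\Z\cong\pi_1(X_0(K)\sm\nu\Sigma)\cong\pi_1(D^4\sm\nu\Sigma')/\langle\langle\lambda\rangle\rangle$, where $\lambda$ is the longitude of $K$. Thus $\Z$ is a \emph{quotient} of $\pi_1(D^4\sm\nu\Sigma')$, not conversely; since $\lambda$ lies in the commutator subgroup you only deduce that $\pi_1(D^4\sm\nu\Sigma')$ has perfect commutator subgroup, not that it is cyclic.

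The paper's proof avoids both issues by not reducing the intersection number at all. In Lemma~\ref{lem:shake} it removes a neighbourhood of the cocore to obtain a genus~$g$ surface $P\subseteq D^4$ bounding the boundary link $P_{k+1,k}(K)$, accepts that $\pi_1(D^4\sm\nu P)$ merely has perfect commutator subgroup (cf.\ Remark~\ref{remark:what-we-use}), and runs the $W_P$ intersection-form machinery of Section~\ref{sec:1=>2} to verify condition~\eqref{item:main-2} of Theorem~\ref{thm:main} for $M_{P_{k+1,k}(K)}$, giving $g_\Z(P_{k+1,k}(K))\le g$. Then Corollary~\ref{cor:bandsum}, via internal band sums disjoint from boundary Seifert surfaces, yields $g_\Z(P_{k+1,k}(K))=g_\Z(K_{1,1})=g_\Z(K)$. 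So the algebraic characterisation of Theorem~\ref{thm:main} is essential to the argument, contrary to your remark that it ``is not even needed''.
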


The case $r=1$ of Theorem \ref{thm:main}, which is the main theorem of \cite{FellerLewark_19}, describes the $\Z$-genus of a knot algebraically, so this yields an algebraic characterisation of the $\Z$-shake genus. Note that we can also define the shake genus and the slice genus of a knot by dropping the condition on the fundamental groups. It is not known whether these two knot invariants differ in general. However, the shake genus and the slice genus do not coincide in the smooth category~\cite[Corollary 1.2]{Piccirillo:2019-1}.

Given a knot $K$, let $K_{a,b}$ denote the
$(a,b)$-cable of $K$, where $a$ is the longitudinal winding, and let $P_{p,n}(K)$ denote the link obtained by taking $p+n$ parallel copies of $K$, with pairwise vanishing linking numbers, where~$p$ components have the same orientation as $K$ and the remaining $n$-components have the opposite orientation.
As we show in Lemma~\ref{lem:shake}, the $\Z$-shake genus of $K$ can be reinterpreted as the minimal genus of a connected surface in $D^4$ with boundary $P_{\ell+1,\ell}(K)$, for some $\ell \geq 0$.  From this point of view, the next corollary extends Theorem~\ref{cor:shake}.  We compute the $\Z$-genus for $P_{p,n}(K)$ for every pair of nonnegative integers~$p$ and~$n$.

\begin{corollary}\label{cor:parallel}
Let $p$ and $n$ be nonnegative integers and let $w=p-n$. If $w=0$, then $P_{p,n}(K)$ is $\Z$-weakly slice, and otherwise
\[g_\Z(P_{p,n}(K))=g_\Z(K_{w,1})=g_\Z(K_{w,-1}) \leq g_\Z(K).\]
\end{corollary}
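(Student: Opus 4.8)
The plan is to prove Corollary~\ref{cor:parallel} by combining Theorem~\ref{thm:main} with a direct analysis of the Blanchfield form of $M_{P_{p,n}(K)}$, together with an explicit geometric construction realizing the upper bound. I will treat the cases $w = 0$ and $w \neq 0$ separately.

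\textbf{The case $w=0$.} Here I would argue that $P_{p,p}(K)$ is a boundary link with torsion-free Alexander module and invoke Corollary~\ref{cor:weakly-slice-torsion-free}. The $2p$ parallel copies of $K$ with $p$ oppositely oriented cobound $p$ disjoint annuli pushed off $K$ along a Seifert surface of $K$; pairing each copy with an oppositely-oriented neighbour gives disjoint Seifert surfaces (annuli), so $P_{p,p}(K)$ is a boundary link. A Mayer--Vietoris or direct surface computation (using that the link exterior deformation retracts onto a space built from $2p$ copies of the knot exterior glued along the infinite cyclic cover pattern) shows $H_1(X_{P_{p,p}(K)};\La)$ is free over $\La$: geometrically the $p$ dual bands produce a slice disc system trivializing the torsion. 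Then Corollary~\ref{cor:weakly-slice-torsion-free} gives $\Z$-weak sliceness.

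\textbf{The case $w \neq 0$.} The key observation is that $P_{p,n}(K)$, $K_{w,1}$, and $K_{w,-1}$ all have \emph{the same} single-variable Blanchfield form on their torsion Alexander modules, namely that of the $w$-fold connected cyclic cover pattern applied to $K$ — equivalently the Blanchfield form of $K_{w,1}$, which up to isometry only depends on $w$ and on the Blanchfield form of $K$ itself (indeed $H_1$ of the relevant cover of the knot exterior, with the $t \mapsto t^{\pm w}$ reparametrization, is insensitive to the extra parallel cancelling strands and to the sign of the meridional generator). Since $M_L$ is what feeds into Theorem~\ref{thm:main}\eqref{item:main-2}, and the equivalence \eqref{item:main-1}$\Leftrightarrow$\eqref{item:main-2} says the $\Z$-genus is exactly the minimal size over Hermitian presentation matrices $A$ with $A(1)$ of signature $0$ presenting that form, the three $\Z$-genera coincide. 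I must also check $P_{p,n}(K)$ is a boundary link so that Theorem~\ref{thm:main} applies: again pair up $n$ of the $p$ positively-oriented strands with the $n$ negative strands via annuli, and cap the remaining $w$ positive strands with parallel Seifert surfaces, all made disjoint by pushing into distinct levels of a collar.

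\textbf{The upper bound $\leq g_\Z(K)$.} Given a $\Z$-surface $\Sigma$ of genus $g_\Z(K)$ for $K$ in $D^4$, take $w$ parallel pushoffs of $\Sigma$ (using the $0$-framing, consistent with the linking number hypothesis) and tube them together into a single connected surface; for the cable $K_{w,\pm 1}$ one instead takes the appropriate satellite operation on $\Sigma$ — concretely, the image of $\Sigma$ under the degree-$w$ map on a tubular neighbourhood, which is embedded, locally flat, connected after tubing, has the right boundary, and has genus $g_\Z(K)$ (the Euler characteristic and boundary count are preserved up to the tubes, which are chosen to keep genus fixed), with infinite cyclic complement because the ambient reparametrization $t \mapsto t^w$ factors $\pi_1$ through $\Z$. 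This produces the inequality for $K_{w,1}$ and $K_{w,-1}$, hence for $P_{p,n}(K)$ by the equality just established.

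\textbf{Main obstacle.} The genuine work is the Blanchfield form identification: showing rigorously that the torsion Alexander module together with its pairing for $P_{p,n}(K)$ is isometric to that of $K_{w,\pm 1}$, and that this depends only on $w$. This requires a careful satellite/cabling computation of the Blanchfield form — tracking how the infinite cyclic cover of the parallel-copies link exterior decomposes, checking the reparametrization $t \mapsto t^{\pm w}$ acts correctly on the pairing, and verifying that the redundant cancelling strands contribute only a free summand (which does not affect the torsion form). The sign issue distinguishing $K_{w,1}$ from $K_{w,-1}$ — reversing a string orientation sends $t$ to $t^{-1}$ on that factor — needs the observation that the single-variable Blanchfield form is invariant under $t \leftrightarrow t^{-1}$ up to isometry, which is why both cables have the same $\Z$-genus. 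Everything else (boundary-link verification, the geometric upper bound) is routine once this algebra is in place.
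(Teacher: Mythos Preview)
Your strategy differs from the paper's, and your upper-bound argument has a genuine gap.

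The paper does not compute Blanchfield forms of cables or parallel-copy links directly. Instead it invokes Corollary~\ref{cor:bandsum}: $P_{p,n}(K)$ is a boundary link (parallel copies of a Seifert surface for $K$, suitably oriented, give disjoint Seifert surfaces), and by $p+n-1$ internal band sums performed \emph{disjointly from these surfaces} one obtains the unknot when $w=0$, and either $K_{w,1}$ or $K_{w,-1}$ when $w\neq 0$ (Figure~\ref{fig:shakeg}). Corollary~\ref{cor:bandsum} then yields $g_\Z(P_{p,n}(K)) = g_\Z(K_{w,1}) = g_\Z(K_{w,-1})$ in one stroke. Your Blanchfield-isometry claim is in fact a \emph{consequence} of Theorem~\ref{thm:2=>1}\ref{item:2=>1 i} applied to exactly these band sums, so the satellite/reparametrisation computation you flag as the main obstacle is unnecessary --- the band-sum realisation hands you the isometry for free. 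Your route could be made to work, but it replaces a one-line appeal to Corollary~\ref{cor:bandsum} with a substantial Mayer--Vietoris analysis of the cable exterior.

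The real problem is your proof of $g_\Z(K_{w,\pm 1}) \leq g_\Z(K)$. Taking $w$ parallel $0$-framed pushoffs of a genus-$g$ $\Z$-surface $\Sigma$ and tubing them together produces a surface of genus $wg$, not $g$, and its boundary is $w$ parallel copies of $K$ rather than $K_{w,\pm 1}$; the ``image of $\Sigma$ under the degree-$w$ map on a tubular neighbourhood'' is not an embedding. There is no evident geometric construction of a genus-$g_\Z(K)$ $\Z$-surface for $K_{w,1}$ along these lines. The paper does not prove this inequality either: it cites \cite{Feller-Miller-Pinzon-Caicedo:2019-1} and \cite{McCoy:2019-1}, where the inequality is established via the identification $g_\Z = \ga$ by exhibiting an Alexander-trivial subblock of the correct size inside a Seifert matrix for the cable. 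Your sketch would need to be replaced by that algebraic argument or by a citation.
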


Theorem~\ref{cor:shake} can be recovered from the case $w=1$ and Lemma~\ref{lem:shake}.  
The fact that $g_\Z(K_{w,1}) \leq g_\Z(K)$ follows from \cite[Theorem 1.2]{Feller-Miller-Pinzon-Caicedo:2019-1} and \cite[Theorem~4]{McCoy:2019-1}.

Recall that an $r$-component link is a \emph{good boundary link} if there is a homomorphism $\theta \colon \pi_1(X_L) \to F_r$ sending the meridians to $r$ generators of the free group $F_r$, such that $\ker \theta$ is perfect; see \cite{Freedman:1993-1}, \cite{Freedman-Teichner:1995-2}, and~\cite{Cha-Kim-Powell} for more details.
An important open question related to topological surgery for 4-manifolds is whether every good boundary link is freely slice, that is bounds a disjoint union discs in $D^4$ such that the complement has free fundamental group~\cite[Corollary~12.3C]{Freedman-Quinn:1990-1}. We show that at least every good boundary link bounds a planar $\Z$-surface.

\begin{corollary}\label{cor:goodboundary}
Every good boundary link is $\Z$-weakly slice.
\end{corollary}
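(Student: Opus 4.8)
The plan is to reduce this to Corollary~\ref{cor:weakly-slice-torsion-free}. A good boundary link is in particular a boundary link, hence has vanishing pairwise linking numbers and a well-defined single-variable Alexander module, so by that corollary it suffices to prove that $H_1(X_L;\La)$ is torsion-free whenever $L$ is a good boundary link. I would establish this group-theoretically, using that the kernel of the defining homomorphism to the free group is perfect.

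Write $\pi := \pi_1(X_L)$ and let $\eps \colon \pi \to \Z$ be the homomorphism defining the $\La$-coefficients; it sends every meridian to $1$. Put $K := \ker \eps$. The infinite cyclic cover $\wt{X_L}$ has $\pi_1 \cong K$, so there is a $\La$-module isomorphism $H_1(X_L;\La) \cong H_1(\wt{X_L};\Z) \cong K/[K,K]$, where $t$ acts by the deck transformation, equivalently by conjugation in $\pi$ by a chosen meridian $\mu_1$. Next I would bring in the defining homomorphism $\theta \colon \pi \twoheadrightarrow F_r$ of the good boundary link, with perfect kernel $P := \ker\theta$ and $\theta(\mu_i) = x_i$ a free basis of $F_r$. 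Since $\Sigma \circ \theta = \eps$, where $\Sigma \colon F_r \to \Z$ sends each $x_i$ to $1$, one has $P \subseteq K$, and a short exact sequence of groups $1 \to P \to K \to Q \to 1$ in which $Q := K/P$ is identified $\Z$-equivariantly with $\ker(F_r \xrightarrow{\Sigma} \Z)$, matching conjugation by $\mu_1$ with conjugation by $x_1$. The five-term exact sequence in group homology reads
\[ H_2(K) \to H_2(Q) \to (H_1(P))_Q \to H_1(K) \to H_1(Q) \to 0, \]
and $H_1(P) = P/[P,P] = 0$ because $P$ is perfect, so $H_1(K) \xrightarrow{\cong} H_1(Q)$; by naturality this isomorphism respects the $t$-actions, so $K/[K,K] \cong Q/[Q,Q]$ as $\La$-modules.

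Finally I would compute $Q/[Q,Q]$ directly. The group $Q = \ker(F_r \xrightarrow{\Sigma} \Z)$ is free, with Schreier basis $\{\, x_1^k (x_j x_1^{-1}) x_1^{-k} \ :\ k \in \Z,\ 2 \le j \le r \,\}$, on which conjugation by $x_1$ (the $t$-action) shifts $k \mapsto k+1$; hence $Q/[Q,Q] \cong \La^{r-1}$ as a $\La$-module. This is free, in particular torsion-free, so $H_1(X_L;\La)$ is torsion-free and $L$ is $\Z$-weakly slice by Corollary~\ref{cor:weakly-slice-torsion-free}. I do not expect a genuinely hard step here: the points that need care are the bookkeeping of the $\La$-module structure (that is, the $t = [\mu_1\text{-conjugation}]$ action) through the cover/group-homology dictionary and through the five-term sequence, and checking that, for a good boundary link, one genuinely has $\theta$ surjective and $P \subseteq K$, so that $Q = K/P$ is exactly $\ker(F_r \to \Z)$ with the correct $\Z$-action.
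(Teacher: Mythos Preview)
Your proof is correct. Both you and the paper reduce to Corollary~\ref{cor:weakly-slice-torsion-free} and show that the single-variable Alexander module is isomorphic to $\La^{r-1}$, but the computations take different routes. The paper works with $M_L$, records that $H_1(M_L;\Z F_r) = 0$ (which is exactly the statement that $\ker\theta$ is perfect), and then runs the change-of-rings universal coefficient spectral sequence for $\Z F_r \to \La$ to read off $H_1(M_L;\La) \cong \Tor_1^{\Z F_r}(\Z,\La) \cong H_1(\vee^r S^1;\La) \cong \La^{r-1}$. You instead work with $X_L$, translate everything into group homology of covers, use the five-term sequence of the extension $1 \to P \to K \to Q \to 1$ together with perfectness of $P$ to get $H_1(K)\cong H_1(Q)$, and then compute $\ker(F_r \xrightarrow{\Sigma} \Z)^{\mathrm{ab}}$ explicitly via a Schreier basis. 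Your route is more elementary in that it avoids spectral sequences beyond the five-term sequence and makes the $\La$-module structure fully transparent through the shift action on the Schreier generators; the paper's route is terser and packages the same content into the identification $\Tor_1^{\Z F_r}(\Z,\La) \cong H_1(\vee^r S^1;\La)$, which is ultimately the same calculation as yours, since the $\Z$-cover of $\vee^r S^1$ has fundamental group $\ker(F_r \to \Z)$.
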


For a given link $L$, we can construct a new link called the \emph{Whitehead double}, denoted by $\Wh(L)$, by performing the untwisted Whitehead doubling operation on each component of $L$. Note that Whitehead doubling involves a choice of the sign for each clasp.  Recall that every Whitehead double of a link with vanishing linking numbers is a good boundary link, and hence by the previous corollary is $\Z$-weakly slice.

\begin{corollary}\label{cor:Wh}
If $L=L_1 \cup L_2$ is a $2$-component link, then for any choice of Whitehead double we have
\[g_\Z(\Wh(L))=\left\{
\begin{array}{cl}
0&\text{if $\lk(L_1,L_2)=0$},\\
1&\text{otherwise.}\end{array}\right.\]
Moreover, if $L=L_1 \cup L_2 \cup L_3$ is a $3$-component link, then $\Wh(L)$ is $\Z$-weakly slice if and only if either $($i$)$ $L$ has vanishing linking numbers, or $($ii$)$ for some $i,j,k$ with $\{i,j,k\}=\{1,2,3\}$:
\begin{enumerate}[label=(\emph{\alph*}), font=\upshape]
  \item the signs of the clasps of $\Wh(L_i)$ and $\Wh(L_j)$ disagree,
  \item $\lk(L_i,L_j)=0$,  and
  \item $|\lk(L_i,L_k)|=|\lk(L_j,L_k)|.$
\end{enumerate}
\end{corollary}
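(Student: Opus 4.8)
The plan is to reduce the entire corollary to a computation of the single-variable Alexander module of $\Wh(L)$, and in the last case also its Blanchfield form, using that a Whitehead double is always a boundary link. Since the Whitehead pattern has winding number zero, each component $\Wh(L_i)$ bounds a genus one Seifert surface $F_i$ --- a once-punctured torus obtained by clasping the two boundary circles of an annulus running along $L_i$ --- inside a tubular neighbourhood of $L_i$, and the $F_i$ are pairwise disjoint. So the first step is to record the generalised Seifert matrix $V$ of $\Wh(L)$ with respect to $F_1\sqcup\dots\sqcup F_r$. In the standard symplectic basis $\{a_i,b_i\}$ of $H_1(F_i)$, with $a_i$ the clasp curve and $b_i$ a parallel copy of $L_i$, the block of $V$ on $F_i$ is $\left(\begin{smallmatrix}\epsilon_i & 1\\ 0 & 0\end{smallmatrix}\right)$, where $\epsilon_i=\pm1$ is the sign of the $i$-th clasp and the vanishing lower row records that the doubling is untwisted; the only nonzero entries between distinct blocks are $\lk(b_i^+,b_j)=\lk(L_i,L_j)=:\ell_{ij}$, because $a_i$ is null-homologous in the complement of $L_j$ for $j\neq i$.

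Using the units $\pm1$ appearing in $tV-V^{\transpose}$ to eliminate the generators coming from the $a_i$, one reduces $tV-V^{\transpose}$ by elementary moves to the size $r$ matrix $M$ with diagonal entries $t$ and with $(i,j)$-entry $\epsilon_j\,\ell_{ij}(t-1)^2$ for $i\neq j$. By Lemma~\ref{lem:XL-vs-ML} together with the Seifert-surface description of the Alexander module, $\coker M$ is the torsion submodule $TH_1(M_{\Wh(L)};\La)$, and for $r\le 3$ a direct expansion gives
\[
  \det M \;=\; t^{\,r}\;-\;t\,(t-1)^4\,Q\;+\;(t-1)^6\,R,
\]
where $Q=\sum_{i<j}\epsilon_i\epsilon_j\ell_{ij}^2$ and $R=2\,\epsilon_1\epsilon_2\epsilon_3\,\ell_{12}\ell_{13}\ell_{23}$ (the term with $R$ being absent when $r=2$). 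In particular $\det M\neq 0$, so $\coker M$ is $\La$-torsion, and whenever some $\ell_{ij}\neq 0$ it is cyclic, $\coker M\cong\La/(\det M)$.

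For the $3$-component statement and the ``$\Z$-weakly slice'' part of the $2$-component statement this already suffices. By Corollary~\ref{cor:weakly-slice-torsion-free}, $\Wh(L)$ is $\Z$-weakly slice exactly when its Alexander module is torsion free, i.e.\ when $\coker M=0$, i.e.\ when $\det M$ is a unit of $\La$. Since $(t-1)^4\mid\det M-t^{\,r}$ while $(t-1)^2\nmid \pm t^{\,k}-t^{\,r}$ for $k\neq r$, the determinant is a unit precisely when $Q=R=0$. When $r=2$ this says $\ell_{12}=0$; when $r=3$, the vanishing of $R$ forces $\ell_{ij}=0$ for some pair with $\{i,j,k\}=\{1,2,3\}$, and then $Q=\epsilon_k(\epsilon_i\ell_{ik}^2+\epsilon_j\ell_{jk}^2)$ vanishes exactly when $\ell_{ik}=\ell_{jk}=0$ (the ``all linking numbers zero'' case) or when $\epsilon_i=-\epsilon_j$ and $|\ell_{ik}|=|\ell_{jk}|$ --- that is, precisely in alternative (i) or (ii) of the corollary. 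The same computation shows $\coker M\neq0$ when these conditions fail, giving the lower bound $g_\Z(\Wh(L))\ge1$ in the remaining case of the $2$-component statement.

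It remains to show $g_\Z(\Wh(L))\le1$ when $L=L_1\cup L_2$ and $\ell:=\lk(L_1,L_2)\neq0$; this is the crux, and I would establish it by verifying hypothesis~\eqref{item:main-2} of Theorem~\ref{thm:main} with $g=1$. Write $s=t+t^{-1}-2=t^{-1}(t-1)^2$, so that $\det M\doteq1-\epsilon_1\epsilon_2\ell^2s^2$, and consider the Hermitian matrix over $\La$
\[
  A=\begin{pmatrix}\ell s & 1\\ 1 & \epsilon_1\epsilon_2\,\ell s\end{pmatrix}.
\]
Then $\det A\doteq\det M$, so $\coker A\cong\La/(\det M)\cong\coker M$; the linking form induced by $A$ on this cyclic module has generator self-pairing $\doteq\ell s/\det A$, which is nonsingular since $\ell s$ is a unit modulo $\det A$; and $A(1)=\left(\begin{smallmatrix}0&1\\1&0\end{smallmatrix}\right)$ has signature $0$. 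What must still be checked is that this linking form actually is the Blanchfield form of $M_{\Wh(L)}$ on $TH_1(M_{\Wh(L)};\La)$, not merely a nonsingular form on the correct module: since both forms live on the cyclic module $\La/(\det M)$, this reduces to comparing the self-pairing of a generator, computed from $V$ through the usual formula presenting the Blanchfield form via $(tV-V^{\transpose})^{-1}$. Granting this, Theorem~\ref{thm:main} produces a genus one $\Z$-surface. I expect the two delicate points to be exactly this identification of the Blanchfield form --- as opposed to just its underlying module, which we have already pinned down --- together with the bookkeeping of the signs $\epsilon_i$ and of the pushoffs in the generalised Seifert matrix; if the direct verification turns out to be cumbersome, one can instead band the two components of $\Wh(L)$ into a knot $J$, use $g_\Z(\Wh(L))\le g_\Z(J)$ (capping off with the pair of pants produced by the band), and apply the knot case~\cite{FellerLewark_19} of Theorem~\ref{thm:main} to $J$, whose Alexander module is again $\La/(\det M)$.
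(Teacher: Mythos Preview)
Your treatment of the $\Z$-weakly slice question---both the full $3$-component statement and the case $\lk(L_1,L_2)=0$ for two components---is correct and essentially parallel to the paper's. Both arguments compute the order of $TH_1(M_{\Wh(L)};\La)$ and determine when it is a unit; the paper does this by first invoking Corollary~\ref{cor:bandsum} to band to a knot $K_L$ and then reading off $\Delta_{K_L}$ from its Seifert matrix, whereas you row-reduce the presentation $tV-V^{\transpose}$ directly. These are equivalent computations (indeed your Seifert matrix, with a single nonzero cross-entry $\lk(b_i^+,b_j)=\ell_{ij}$ per off-diagonal block, is geometrically transparent). A minor slip: for $r=2$ your reduced matrix has $\det M=t^2-\epsilon_1\epsilon_2\ell^2(t-1)^4$, without the extra factor of $t$ in your stated general formula; this does not affect the argument.

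The real gap is the upper bound $g_\Z(\Wh(L))\le 1$ when $r=2$ and $\ell\neq 0$. Your primary route---exhibiting the Hermitian matrix $A$ and invoking Theorem~\ref{thm:main}\eqref{item:main-2}---requires showing that $A$ presents the Blanchfield \emph{form}, not merely the underlying module, and you leave this unverified. Your fallback of banding to a knot $J$ and citing its Alexander module $\La/(\det M)$ also stops short: knowing the module alone does not yield $g_\Z(J)\le 1$. The paper bypasses the Blanchfield form entirely here. It uses Corollary~\ref{cor:bandsum} to obtain the \emph{equality} $g_\Z(\Wh(L))=\ga(K_L)$, and then simply observes that the top-left $2\times 2$ block of the Seifert matrix of $K_L$---the block coming from either genus-one Whitehead surface $F_i$---already satisfies $\det(tA-A^{\transpose})=t$, so $\ga(K_L)\le 1$ directly from the Seifert-matrix definition of the algebraic genus. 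Your own Seifert matrix contains exactly such a block, namely $\left(\begin{smallmatrix}\epsilon_1&1\\0&0\end{smallmatrix}\right)$; inserting this one observation into your fallback argument would close the gap without any Blanchfield computation.
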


Let $K$ and $J$ be two oriented knots in $S^3$ that can be separated by an embedded $2$-sphere. Set $I=[0,1]$. Consider an embedded band $b\colon I \times I \to S^3$ with $b(I\times I)\cap K=b(I\times \{0\})$ and $b(I\times I)\cap J=b(I\times \{1\})$, where the orientations on these intervals coming from the orientations of the knots and from the intervals as a subset of the circle $\partial b(I\times I)$ agree.   We obtain a new knot \[K \#_b J \coloneqq K\cup J\cup b\left(\{0,1\}\times I\right)\setminus b\left((0,1)\times \{0,1\}\right)\] from \emph{band surgery} along $b$, which is called the the \emph{band connected sum} of $K$ and $J$ along $b$. If $b$ is trivial, that is if there exists an embedded $2$-sphere separating $K$ and $J$ such that the intersection of the sphere and the image of the band is an arc, then the band connected sum yields the connected sum $K\# J$. It was proven by Miyazaki~\cite[Theorem 1.1]{Miyazaki:1998-1} that there is a ribbon concordance from $K \#_b J$ to $K \# J$ for any band~$b$. In particular, $g_\Z(K \# J) \leq g_\Z(K \#_b J)$. This result can be thought of as follows.
Given a two component split link, which is a particular kind of boundary link, the connected sum of the components minimises the $\Z$-genus among all possible internal band sums on the link.  We extend this to all boundary links.

\begin{corollary}\label{cor:bandsum}
Let $L$ be an $r$-component boundary link and let $K_L$, $K_L'$ be knots, both of which are obtained by performing $r-1$ internal band sums on $L$. Furthermore, suppose that internal bands for $K_L$ are performed disjointly from some collection of disjoint Seifert surfaces for $L$. Then $$g_\Z(L) = g_\Z(K_L) \leq g_\Z(K_L').$$
\end{corollary}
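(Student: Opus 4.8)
The plan is to establish the two relations $g_\Z(L)=g_\Z(K_L)$ and $g_\Z(L)\le g_\Z(K')$ separately, where in the second $K'$ denotes an arbitrary knot obtained from $L$ by $r-1$ internal band sums. Applying the inequality with $K'=K_L'$ and combining with the equality then gives $g_\Z(L)=g_\Z(K_L)\le g_\Z(K_L')$.

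\emph{The inequality $g_\Z(L)\le g_\Z(K')$.} Take a minimal genus $\Z$-surface $F'\subseteq D^4$ for $K'$, and, after a collar isotopy making $F'$ standard near $\partial D^4$, rebuild $L$ by attaching to $F'$ the $r-1$ dual bands of the band surgeries producing $K'$ from $L$, one at a time in reverse order; write $F_0=F'$ and $F_k$ for the surface after attaching $k$ of them. Each such dual band lies in $S^3$ and meets the current link only in its feet, so it can be pushed into an ever deeper slice $S^3\times(0,\eps)$ of a collar of $\partial D^4$, disjoint from all previously attached bands and, away from its feet, from the current surface; the result is a locally flat, compact, connected, oriented surface $F:=F_{r-1}\subseteq D^4$ with $\partial F=L$. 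Since $L$ has $r$ components and $K'$ is reached from $L$ in $r-1$ band sums, each of those band sums merges two distinct components, so each dual band has both feet on a single boundary component of the connected surface to which it is attached; attaching such a band to a connected oriented surface raises the number of boundary components by one and (by the Euler characteristic count) leaves the genus unchanged, so $g(F)=g(F')=g_\Z(K')$. Finally, adding each dual band replaces $D^4\setminus\nu F_k$ by $D^4\setminus\nu F_{k+1}$, obtained by removing the piece $\nu(b^*)\setminus\nu F_k$; this piece is contractible and meets the rest of the complement along $\partial\nu(b^*)\setminus\nu F_k$, an $S^3$ with two disjoint open $3$-balls removed and hence simply connected, so by van Kampen the removal does not change $\pi_1$. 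Iterating over the $r-1$ bands gives $\pi_1(D^4\setminus\nu F)=\pi_1(D^4\setminus\nu F')=\Z$, so $F$ is a $\Z$-surface for $L$ of genus $g_\Z(K')$ and $g_\Z(L)\le g_\Z(K')$.

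\emph{The equality $g_\Z(L)=g_\Z(K_L)$.} Here I would use Theorem~\ref{thm:main}, i.e.\ \eqref{item:main-1}$\Leftrightarrow$\eqref{item:main-2}, together with the Seifert matrix description of the single variable Blanchfield pairing of a boundary link. Choose disjoint Seifert surfaces $\Sigma_1,\dots,\Sigma_r$ for $L$ that are disjoint from the bands $b_1,\dots,b_{r-1}$ realising $K_L$; after a general position isotopy inside the connected manifold $S^3\setminus\bigcup_i\Sigma_i$ we may take the $b_j$ pairwise disjoint. Then $\Sigma:=\Sigma_1\cup\dots\cup\Sigma_r\cup b_1\cup\dots\cup b_{r-1}$ is a connected Seifert surface for the knot $K_L$; as the $r-1$ bands join the $r$ surfaces along a tree, the inclusions induce an isomorphism $H_1(\Sigma)\cong\bigoplus_i H_1(\Sigma_i)$, and since each $b_j$ is attached along the boundary circles and is disjoint from the interiors of the $\Sigma_i$, the Seifert form of $\Sigma$ in the corresponding basis is literally the total Seifert form $V$ of $\Sigma_1\sqcup\dots\sqcup\Sigma_r$. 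By the standard computation of the Alexander module and Blanchfield pairing from the infinite cyclic cover obtained by cutting along the Seifert surfaces (and Lemma~\ref{lem:XL-vs-ML}), $TH_1(M_L;\La)$ and $TH_1(M_{K_L};\La)$ each carry a Blanchfield pairing presented by $tV^{\transpose}-V$, so the two pairings are isometric. Therefore a Hermitian matrix $A$ over $\La$ with $A(1)$ of signature $0$ presents the Blanchfield pairing of $M_L$ if and only if it presents that of $M_{K_L}$, and Theorem~\ref{thm:main} yields $g_\Z(L)=g_\Z(K_L)$.

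Combining the two parts, $g_\Z(K_L')\ge g_\Z(L)=g_\Z(K_L)$, which is the assertion. I expect the crux to lie in the equality step: verifying carefully that banding disjoint Seifert surfaces along bands disjoint from their interiors leaves the total Seifert matrix unchanged, and pinning down precisely the Seifert-matrix presentation of the boundary-link Blanchfield pairing needed to invoke Theorem~\ref{thm:main}. The fundamental group computation in the inequality step is the other delicate point, but it reduces to a routine van Kampen argument once the dual bands are pushed into a collar.
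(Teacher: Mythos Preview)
Your proposal is correct and follows essentially the same two-step strategy as the paper. The inequality $g_\Z(L)\le g_\Z(K')$ is exactly the paper's argument (glue the genus~$0$ cobordism from $L$ to $K'$ to a minimal $\Z$-surface for $K'$; the paper cites the proof of \eqref{item:main-3}$\Rightarrow$\eqref{item:main-1} for the fundamental-group computation, which amounts to your van Kampen argument). For the equality, the paper likewise deduces $g_\Z(K_L)\le g_\Z(L)$ by applying Theorem~\ref{thm:main} to $L$ and then invoking the isometry $\Bl_L\cong\Bl_{K_L}$. The isometry you assert via ``the standard Seifert-matrix computation'' is precisely Theorem~\ref{thm:2=>1}\ref{item:2=>1 i}, whose proof in the paper is a bit more delicate than your sketch suggests: one must check that the torsion Blanchfield \emph{form} of the link (not just the torsion Alexander module) is given by the $2g\times 2g$ block coming from $V$, and the paper does this by passing to the localisation $\La_S=\Z[t^{\pm1},(t-1)^{-1}]$, appealing to Conway's computation there, and then descending back to $\La$ using that $t-1$ acts invertibly on the torsion. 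Your self-identified ``crux'' is exactly this point; note also that $tV^\transpose-V$ is not Hermitian, so the relevant presenting matrix is rather $(1-t)V+(1-t^{-1})V^\transpose$.
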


\subsubsection*{Organisation}

Section~\ref{section:preliminaries} gives preliminaries on the Blanchfield form and Alexander duality in a disc, a useful generalisation of Alexander duality in a sphere.  Section~\ref{sec:2=>1} proves the implications \eqref{item:main-2} $\Rightarrow$ \eqref{item:main-3} $\Rightarrow$ \eqref{item:main-1}  in Theorem~\ref{thm:main}. Section \ref{sec:1=>2} proves that \eqref{item:main-1} implies \eqref{item:main-2}.
Section~\ref{sec:proofs-applications} proves the applications described in Section~\ref{subsec:applications}.

\begin{ack}
We thank Christopher Davis, Lukas Lewark, Duncan McCoy, Matthias Nagel, and Arunima Ray for helpful conversations and a couple of timely suggestions.  MP thanks the MPIM in Bonn, where he was a visitor during the period in which this paper was written. PF  gratefully acknowledges support by the SNSF Grant 181199.
\end{ack}

\section{Blanchfield forms and Alexander duality}\label{section:preliminaries}

\subsection{The Blanchfield form}

Let $M$ be a closed, oriented, connected 3-manifold equipped with a homomorphism $\pi_1(M) \to \Z$, giving rise to twisted homology and cohomology with coefficients in the $\La$-modules $\La$, $\Q(t)$, and $\Q(t)/\La$.
The \emph{Blanchfield form}~\cite{Blanchfield:1957-1} $\Bl_M$ is the nonsingular, sesquilinear, Hermitian form~\cite{Powell-QJM} defined on the torsion submodule $TH_1(M;\La)$ of $H_1(M;\La)$.
\[
\Bl_M \colon TH_1(M; \La) \times TH_1(M; \La) \to \Q(t)/\La,
\]
with adjoint $x\mapsto \Bl_M(-,x)$ given by the sequence of maps that we now describe; compare also~\cite{Borodzik-Friedl-Powell}. First, we use the Poincar\'{e} duality map $PD^{-1} \colon TH_1(M;\La) \xrightarrow{\cong} TH^2(M;\La)$.
The universal coefficient spectral sequence (see \cite[Section~2.1~and~2.4]{Hillman:2012-1-second-ed}) gives rise to an exact sequence as follows, where $\ol{\Ext}$ denotes that the involution on $\La$ determined by $t \mapsto t^{-1}$ has been used to alter the $\La$-module structure:
\[ 0  \to \ol{\Ext}^1_\La(H_1(M,\La),\La) \to  H^2(M;\La)\to \ol{\Ext}^0_\La(H_2(M,\La),\La).\]
Since $\ol{\Ext}^0_\La(H_2(M,\La),\La) = \Hom_\La(H_2(M,\La),\La)$ is torsion-free, we obtain a map $TH^2(M;\La)\to \ol{\Ext}^1_\La(H_1(M,\La),\La)$, which we then compose with the map
\[\ol{\Ext}^1_\La(H_1(M,\La),\La) \to \ol{\Ext}^1_\La(TH_1(M,\La),\La)\]
 induced by the inclusion from $TH_1(M;\La) \subseteq H_1(M;\La)$.
 Next, the Bockstein long exact sequence arising from the short exact sequence of coefficients $0 \to \La \to \Q(t) \to \Q(t)/\La \to 0$:
\[\xymatrix @R-0.8cm @C-0.2cm {
\ar[r] & \ol{\Ext}^0_{\La}(TH_1(M;\La),\Q(t)) \ar[r] & \ol{\Ext}^0_{\La}(TH_1(M;\La),\Q(t)/\La) \ar[r] & \\
\ar[r]& \ol{\Ext}^1_{\La}(TH_1(M;\La),\La) \ar[r] & \ol{\Ext}^1_{\La}(TH_1(M;\La),\Q(t)) \ar[r] & }\]
has first and last terms vanishing, the first since $TH_1(M;\La)$ is $\La$-torsion and the last since $\Q(t)$ is an injective $\La$-module.  Thus there is a map \[\ol{\Ext}^1_\La(TH_1(M;\La),\La)\to \ol{\Ext}^0_{\La}(TH_1(M;\La),\Q(t)/\La) = \Hom_\La(TH_1(M;\La),\Q(t)/\La).\]
The composition of these maps gives a homomorphism \[TH_1(M;\La) \to \Hom_\La(TH_1(M;\La),\Q(t)/\La),\] which as promised is the adjoint of the Blanchfield pairing.

\begin{definition}
  We say that the Blanchfield pairing is \emph{presented} by a Hermitian square matrix $A(t)$ over $\La$ of size $n$ if it is isometric to the pairing
\begin{align*}
 \ell_A \colon  \La^n/(A(t) \La^n) \times   \La^n/(A(t) \La^n) &\to \Q(t)/\La \\
 (v,w) & \mapsto v^T A(t^{-1})^{-1} \ol{w}.
\end{align*}
\end{definition}

For an $r$-component link $L$, we write $\Bl_{L}$ for the Blanchfield form of the zero surgery 3-manifold~$M_L$, defined using the homomorphism $\pi_1(M_L) \to \Z$ sending each oriented meridian to $1 \in \Z$.

\subsection{Alexander duality in a disc}\label{sec:Alexander-duality}

In this section we briefly recall a version of Alexander duality for the disc.
Let $X$ be a submanifold properly embedded in a disc $D^n$ and assume that $X$ admits an open tubular neighbourhood $\nu X$.

\begin{proposition}\label{proposition:alex-duality-disc}
For every $k \in \Z$, we have:
\[H_k(D^n \sm \nu X) \cong \wt{H}^{n-k-1}(S^{n-1} \cup_{\nu \partial X} \nu X).\]
\end{proposition}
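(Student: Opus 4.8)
The plan is to reduce the statement to ordinary Alexander duality in the sphere $S^n$ by a standard doubling/collapsing argument. Regard $D^n$ as the upper hemisphere of $S^n$, with lower hemisphere $D^n_-$, so that $S^n = D^n \cup_{S^{n-1}} D^n_-$. The properly embedded submanifold $X \subseteq D^n$ has $\partial X \subseteq S^{n-1}$, and we may assume (after an isotopy) that near $S^{n-1}$ the pair $(D^n, X)$ looks like the product $(S^{n-1}, \partial X) \times [0,1)$. Let $Y := X \cup_{\partial X} (\partial X \times [0,\infty))$ be obtained by pushing a collar of $\partial X$ out into the lower hemisphere, and place $Y$ inside $S^n$ so that $Y \cap D^n_-$ is exactly this collar $\partial X \times I$; then $Y$ is a closed submanifold of $S^n$ (no boundary) with a tubular neighbourhood $\nu Y$, and $Y \cap S^{n-1} = \partial X$, $\nu Y \cap S^{n-1} = \nu \partial X$.

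The two key computations are then: first, $S^n \sm \nu Y$ deformation retracts onto $D^n \sm \nu X$, since the part of $S^n \sm \nu Y$ lying in the lower hemisphere is $D^n_- \sm (\nu \partial X \times I)$, which is a collar on $S^{n-1} \sm \nu \partial X$ and can be pushed back up into $D^n$; this gives $H_k(S^n \sm \nu Y) \cong H_k(D^n \sm \nu X)$. Second, $\nu Y$ itself deformation retracts to $Y$, and $Y = X \cup_{\partial X}(\partial X \times I)$ is homotopy equivalent to $X$ with a collar, but the more useful observation is that the regular neighbourhood $S^{n-1} \cup_{\nu \partial X} \nu X$ appearing in the statement is precisely (up to homotopy equivalence) $\nu Y \cup (S^{n-1} \sm \nu \partial X)$ — equivalently, a regular neighbourhood in $S^n$ of $Y \cup S^{n-1}$ deformation retracts onto $S^{n-1} \cup_{\nu \partial X} \nu X$. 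So I would identify $\wt{H}^{n-k-1}(S^{n-1} \cup_{\nu\partial X}\nu X)$ with $\wt H^{n-k-1}$ of a closed (or regular) neighbourhood of $Y \cup S^{n-1}$ inside $S^n$.

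With these two identifications in hand, the result is just classical Alexander duality in $S^n$: for a compact, suitably nice subset $Z \subseteq S^n$ (here $Z$ a regular neighbourhood of $Y \cup S^{n-1}$, which is an ANR/compact manifold with boundary), one has $\wt{H}_k(S^n \sm Z) \cong \wt{H}^{n-k-1}(Z)$. Taking $Z = \nu Y \cup (S^{n-1}\sm \nu\partial X) \simeq S^{n-1}\cup_{\nu\partial X}\nu X$, its complement in $S^n$ is $S^n \sm \nu Y$ intersected with the open upper hemisphere, which retracts to $D^n \sm \nu X$ exactly as above; chaining the isomorphisms gives $H_k(D^n \sm \nu X) \cong \wt H^{n-k-1}(S^{n-1}\cup_{\nu\partial X}\nu X)$. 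One should note $\wt H_k$ versus $H_k$ on the complement: since $D^n\sm \nu X$ is nonempty and connected to the ``outside'' region, $S^n \sm Z$ is nonempty, and whether reduced or unreduced homology appears is handled by the same bookkeeping as in the usual statement; I would state the duality in reduced form throughout and note the left side equals unreduced $H_k$ for $k>0$ and differs by $\Z$ for $k=0$ in the usual way, matching the convention already used for $\wt H$ on the right.

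The main obstacle is purely point-set/PL-topological: making precise the claim that a regular neighbourhood of $Y \cup S^{n-1}$ in $S^n$ deformation retracts onto $S^{n-1}\cup_{\nu\partial X}\nu X$, and that $S^n \sm \nu Y$ retracts onto $D^n \sm \nu X$, i.e. the collar-pushing arguments. Everything else is a citation to Alexander duality. Since $X$ is assumed to have an open tubular neighbourhood and is properly embedded, the collar structure along $S^{n-1} = \partial D^n$ is available (by the local flatness/submanifold hypothesis and uniqueness of collars), so these retractions are straightforward but should be spelled out carefully; I would phrase the argument so that it works equally in the locally flat topological category, which is the setting needed for the applications in the paper.
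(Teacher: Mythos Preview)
Your strategy---embed $D^n$ as a hemisphere of $S^n$ and invoke classical Alexander duality---is sound and genuinely different from the paper's proof, but the execution with $Y$ contains an error. The claim that $D^n_-\sm(\nu\partial X\times I)$ ``is a collar on $S^{n-1}\sm\nu\partial X$'' is false: $D^n_-$ is a ball, not $S^{n-1}\times I$, so after removing a neighbourhood of a finite collar of $\partial X$ the entire region of $D^n_-$ below the collar remains. Hence $S^n\sm\nu Y$ does not retract onto $D^n\sm\nu X$, and $S^n\sm Z$ is not just the upper-hemisphere piece. Concretely, take $n=3$ and $X$ an unknotted properly embedded arc in $D^3$: then $D^3\sm\nu X\simeq S^1$, whereas your $Y$ is a slightly longer arc in $S^3$ whose tubular neighbourhood is an open $3$-ball, so $S^3\sm\nu Y$ is contractible and cannot retract onto $S^1$. (Relatedly, $X\cup_{\partial X}(\partial X\times I)$ is not a closed submanifold: it still has boundary $\partial X\times\{1\}$.)

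The repair is simpler than your construction: drop $Y$ entirely and apply Alexander duality in $S^n$ directly to the compact ANR $Z:=S^{n-1}\cup\nu X$. Then $S^n\sm Z$ is the disjoint union of the open lower hemisphere (contractible) and $\mathrm{int}(D^n)\sm\nu X\simeq D^n\sm\nu X$; the extra contractible component exactly converts reduced to unreduced homology, so $\wt H_k(S^n\sm Z)\cong H_k(D^n\sm\nu X)$ for all $k$, and Alexander duality gives the statement.

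For comparison, the paper never passes to $S^n$. It applies Poincar\'e--Lefschetz duality to the manifold $D^n\sm\nu X$ to get $H^{n-k}(D^n\sm\nu X,\,(S^{n-1}\sm\nu\partial X)\cup(\partial\nu X\sm\nu\partial X))$, uses excision to identify this with $H^{n-k}(D^n,\,S^{n-1}\cup\nu X)$, and then reads off $\wt H^{n-k-1}(S^{n-1}\cup\nu X)$ from the long exact sequence of the pair using that $D^n$ is contractible (with the cases $k=n-1,n$ handled by hand). Your corrected approach trades this chain for a single citation to Alexander duality in the sphere, at the cost of the doubling step; both are short.
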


\begin{proof}
 We have
\begin{align*}
  H_k(D^n \sm \nu X) &\cong H^{n-k}(D^{n} \sm \nu X, (S^{n-1}\sm \nu\partial X) \cup (\partial \nu X \sm \nu \partial X)) \cong H^{n-k}(D^n, S^{n-1} \cup \nu X)
\end{align*}
by the composition of Poincar\'{e}-Lefschetz duality, and excision.
We consider the long exact sequence of the pair:
\[H^{n-k-1}(D^n) \to H^{n-k-1}(S^{n-1} \cup \nu X) \to H^{n-k}(D^n,S^{n-1} \cup \nu X) \to H^{n-k}(D^n).\]
Thus, $H^{n-k-1}(S^{n-1} \cup \nu X) \cong H^{n-k}(D^n,S^{n-1} \cup \nu X)$ unless $k=n-1,n$. For $k=n$ both the left and right hand sides in the statement of the proposition vanish: $H_n(D^n \sm \nu X) =0= \wt{H}^{-1}(S^{n-1} \cup_{\nu \partial X} \nu X)$.  In the case that $k=n-1$, we obtain a short exact sequence
\[0 \to \Z \to H^{0}(S^{n-1} \cup \nu X) \to H^{1}(D^n,S^{n-1} \cup \nu X) \to 0, \]
so $\wt{H}^0(S^{n-1} \cup \nu X) \cong H^{1}(D^n,S^{n-1} \cup \nu X) \cong H_{n-1}(D^n \sm \nu X)$.
Therefore, we obtain our statement for Alexander duality in a disc as claimed:
\[H_k(D^n \sm \nu X) \cong \wt{H}^{n-k-1}(S^{n-1} \cup_{\nu \partial X} \nu X).\qedhere\]
\end{proof}

\section{\eqref{item:main-2} implies \eqref{item:main-3} implies \eqref{item:main-1}}\label{sec:2=>1}

We prove two of the implications in Theorem~\ref{thm:main} in this section.
We need a purely algebraic lemma that shows up in this section and in Section~\ref{sec:1=>2}.

\begin{lemma}\label{lemma:algebraic}
  Let $g \geq 0$ and $r \geq 1$ be integers, and let $Q$ be a $\La$-module with a presentation
  \[\La^{r-1+2g} \xrightarrow{B} \La^{r-1+2g} \to Q \to 0\]
  where $B$  is a square matrix over $\La$ of the form
  \[B = \begin{pmatrix}
    0_{(r-1) \times (r-1)} & 0_{(r-1) \times 2g} \\ 0_{2g \times (r-1)} & A_{2g \times 2g}
  \end{pmatrix}\]
  with $\det(A) \neq 0$. Then the following holds.
\begin{enumerate}[label=(\emph{\roman*}),font=\upshape]
  \item\label{item:algebraic-i} The torsion submodule of $Q$, denoted by $TQ$, is presented by $\La^{2g} \xrightarrow{A} \La^{2g} \to TQ \to 0$. In particular, $\ord TQ = \det A$.
  \item\label{item:algebraic-iii} The $\Lambda$-module $Q$ decomposes as $TQ \oplus \La^{r-1}$.
\end{enumerate}
\end{lemma}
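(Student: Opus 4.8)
The plan is to exploit the block structure of $B$ to split $Q$ as a direct sum of a free module and the cokernel of $A$, and then read off both statements with essentially no further work. Concretely, I would first note that, acting on column vectors $(x,y)^T$ with $x\in\La^{r-1}$ and $y\in\La^{2g}$, the matrix $B$ sends $(x,y)^T$ to $(0,Ay)^T$; hence its image is exactly the submodule $0\oplus A\La^{2g}\subseteq \La^{r-1}\oplus\La^{2g}$. Taking the quotient by this submodule gives a canonical isomorphism
\[
Q \;=\; \La^{r-1+2g}/B\La^{r-1+2g} \;\cong\; \La^{r-1}\,\oplus\,\coker(A),
\qquad \coker(A)=\La^{2g}/A\La^{2g}.
\]

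Next I would identify the torsion submodule. Since $\det A\neq 0$ and $A\cdot\operatorname{adj}(A)=\det(A)\cdot I_{2g}$, for every $v\in\La^{2g}$ we have $\det(A)\,v = A\bigl(\operatorname{adj}(A)v\bigr)\in A\La^{2g}$, so $\det(A)$ annihilates $\coker(A)$; as $\La$ is an integral domain and $\det A\neq 0$, this shows $\coker(A)$ is entirely $\La$-torsion. On the other hand $\La^{r-1}$ is free, hence torsion-free. Using that the torsion submodule of a direct sum is the direct sum of the torsion submodules, the splitting above yields $TQ = 0\oplus\coker(A)=\coker(A)$. In particular $TQ$ has the presentation $\La^{2g}\xrightarrow{A}\La^{2g}\to TQ\to 0$, and since the order of a torsion $\La$-module presented by a square matrix is the generator of its $0$-th Fitting ideal, namely the determinant of that matrix, we get $\ord TQ=\det A$. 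This is \ref{item:algebraic-i}. Statement \ref{item:algebraic-iii} is then immediate: the very same splitting reads $Q\cong\La^{r-1}\oplus\coker(A)=\La^{r-1}\oplus TQ$.

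I do not expect any genuine obstacle here; the argument is short. The only two points deserving a word of care are (a) checking that $\im B$ is \emph{precisely} $0\oplus A\La^{2g}$ (immediate from the block form, but worth spelling out so the direct-sum splitting is clean), and (b) recalling the Fitting-ideal definition of $\ord$ so that the identity $\ord TQ=\det A$ is literally correct, with the usual understanding that this determinant is well defined up to multiplication by a unit $\pm t^{k}$ of $\La$.
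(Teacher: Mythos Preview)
Your proposal is correct and follows essentially the same approach as the paper's proof: split $Q$ via the block structure of $B$ into the free summand $\La^{r-1}$ generated by the first $r-1$ standard basis vectors and the submodule $\coker(A)$ generated by the last $2g$, observe the latter is $\det(A)$-torsion, and conclude. Your version is in fact more explicit than the paper's (which simply asserts that the first $r-1$ generators span a free summand and that the decomposition is ``straightforward''); in particular your use of the adjugate to verify $\det(A)\cdot\coker(A)=0$ and your remark that the torsion functor commutes with direct sums make the argument watertight.
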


\begin{proof}
  Let $\{e_1,\dots,e_{r-1+2g}\}$ be the standard basis of $\La^{r-1+2g}$, and write $[e_i]$ for the image of $e_i$ under the quotient $\La^{r-1+2g} \to Q$.
  Consider the subgroup
  \[T:= \langle [e_{r}],\dots,[e_{2g+1-1}]\rangle\]
  generated by the shown subset of the $[e_i]$.  Each $[e_{i}]$ for $i= r,\dots,r-1+2g$ is $\det(A)$-torsion; in particular, $T$ consists entirely of torsion, hence $T\subseteq TQ$.
On the other hand, given the form of $B$, the remaining generators $\{[e_i]\}_{i=1}^{r-1}$ generate a free summand $\La^{r-1}$, and it is straightforward to see that $Q= TQ \oplus \La^{r-1}$.
\end{proof}

We fix the following setup. Let $L$ be an $r$-component boundary link and let $\{F_i\}_{i=1}^{r}$ be a collection of disjoint Seifert surfaces for $L$. Tube the surfaces $\{F_i\}_{i=1}^{r}$ together along $r-1$ tubes disjoint from the surfaces to obtain a connected Seifert surface $F$, of genus $g$ say.  Let $N$ be a Seifert matrix for $L$ of size $r-1+2g$ obtained by picking a basis $\{\gamma_1,\ldots,\gamma_{r-1+2g}\}$ of $H_1(F;\Z)$ of some Seifert surface for $L$ as follows: the first $r-1$ elements are given by meridians for the tubes used in the construction of $F$, while the next $2g$ elements are given by simple, oriented, closed curves disjoint from the meridians of the tubes that form a symplectic basis of the closed surface $F/\{L_i\}_{i=1}^r$ given by crushing each component of $L$ to a distinct point.  Let $V$ denote the Seifert matrix representing the Seifert form restricted to span of the last $2g$ basis elements.
In particular, we have that $\det(V-V^T)=1$.  We have that $N$ has the form
\begin{equation}\label{eqn:Seifert-form}
  N = \begin{pmatrix}
    0_{(r-1) \times (r-1)} & 0_{(r-1) \times 2g} \\ 0_{2g \times (r-1)} & V_{2g \times 2g}
  \end{pmatrix}
\end{equation}
since meridians to the tubes link themselves and all other curves trivially.

Let $K_L$ be a separating curve on $F$ such that one of the components of $F\setminus K_L$ contains $\partial F$, while the other contains the simple closed curves representing $\gamma_i$ for $r\leq i\leq r-1+2g$.
{We can take $K_L$ to be a collection of push-offs of the components of $L$, banded together along the tubes used in the construction of $F$. Hence $K_L$ is a knot obtained by performing $r-1$ internal band sums on $L$, where internal bands are disjoint from $\{F_i\}_{i=1}^{r}$.}
Then by construction $K_L$ has $V$ as a Seifert matrix.

For $i=1,\dots,r-1+2g$, let $e_i \in H_1(S^3 \sm F;\Z)$ be the element that is Alexander dual to $\gamma_i \in H_1(F;\Z)$.  We also write $e_i$ for $e_i \otimes 1 \in H_1(S^3 \sm  F;\Z) \otimes_{\Z} \La \cong \La^{r-1+2g}$.  Recall that $\{e_i\}_{i=1}^{r-1+2g}$ generates the Alexander module $H_1(X_L;\La)$; see e.g.~\cite[Theorem~6.5]{Lickorish:1997-1}. 

\begin{lemma}\label{lem:algboundarylinkchar}
Let $L$ be an $r$-component link boundary link. Then the following holds.
\begin{enumerate}[label=(\emph{\roman*}),font=\upshape]
  \item\label{item:algebraic--ii} Let $T$ be the $\La$-torsion submodule of $H_1(X_L;\La)$. Then $H_1(X_L;\La) \cong \Lambda^{r-1}\oplus T$ and $\ord(T)(1)= \pm 1$.
  \item\label{item:algebraic--iii} For any choice of Seifert surface $F$ and basis $\{\gamma_i\}_{i=1}^{r-1+2g}$ as above, giving rise to an identification
  \[H_1(X_L;\La)=\Lambda^{r-1+2g}/((tN-N^T)\Lambda^{r-1+2g})\] generated by $\{[e_i]\}_{i=1}^{r-1+2g}$, the torsion submodule $T$ of $H_1(X_L;\La)$ is spanned by $[e_{r}],\dots,[e_{r-1+2g}]$, and presented by $tV-V^T$.
\end{enumerate}
\end{lemma}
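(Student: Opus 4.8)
The plan is to obtain both parts as formal consequences of the algebraic Lemma~\ref{lemma:algebraic}, fed by the standard Seifert presentation of the Alexander module.

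First I would recall the standard fact that, with respect to the generators $\{[e_i]\}_{i=1}^{r-1+2g}$ Alexander dual to the chosen basis $\{\gamma_i\}$ of $H_1(F;\Z)$, the Alexander module is presented by the square matrix $tN-N^T$; that is, $H_1(X_L;\La)\cong\La^{r-1+2g}/((tN-N^T)\La^{r-1+2g})$ with generators $[e_1],\dots,[e_{r-1+2g}]$, see e.g.\ \cite[Theorem~6.5]{Lickorish:1997-1}. Substituting the block decomposition~\eqref{eqn:Seifert-form} of $N$, the presentation matrix becomes
\[
tN-N^T=\begin{pmatrix} 0_{(r-1)\times(r-1)} & 0_{(r-1)\times 2g}\\ 0_{2g\times(r-1)} & tV-V^T\end{pmatrix}.
\]
I would then observe that $A:=tV-V^T$ satisfies $\det A\neq 0$ in $\La$: evaluating the polynomial $\det(tV-V^T)$ at $t=1$ gives $\det(V-V^T)=1\neq 0$, so $\det A$ is a nonzero element of $\La$.

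Now Lemma~\ref{lemma:algebraic} applies with $Q=H_1(X_L;\La)$, $B=tN-N^T$ and this $A$. Part~\ref{item:algebraic-iii} of that lemma yields the splitting $H_1(X_L;\La)\cong T\oplus\La^{r-1}$, establishing the first assertion of~\ref{item:algebraic--ii}. Part~\ref{item:algebraic-i} yields that the torsion submodule $T$ is presented by $\La^{2g}\xrightarrow{tV-V^T}\La^{2g}\to T\to 0$; moreover, since the relations encoded by $B$ involve only the generators $[e_r],\dots,[e_{r-1+2g}]$, these generate the torsion summand $T$, while $[e_1],\dots,[e_{r-1}]$ freely generate the complementary $\La^{r-1}$, which is the content of~\ref{item:algebraic--iii}. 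Finally, $\ord(T)=\det A=\det(tV-V^T)$ is only well defined up to a unit $\pm t^k$ of $\La$, so specialising at $t=1$ and using $\det(V-V^T)=1$ gives $\ord(T)(1)=\pm 1$, completing~\ref{item:algebraic--ii}. As the argument uses only the shape~\eqref{eqn:Seifert-form} and the normalisation $\det(V-V^T)=1$, which hold for any admissible $F$ and $\{\gamma_i\}$, the statement of~\ref{item:algebraic--iii} holds for every such choice, as claimed.

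There is no genuine obstacle here beyond bookkeeping: once the Seifert presentation is in place, both parts fall out of Lemma~\ref{lemma:algebraic}. The single point requiring care is the order-ideal computation — $\ord(T)$ is defined only up to units $\pm t^k$ in $\La$, which is why the clean value $\det(V-V^T)=1$ at $t=1$ is recorded as $\ord(T)(1)=\pm 1$ rather than $=1$.
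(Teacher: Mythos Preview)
Your proposal is correct and follows essentially the same approach as the paper's own proof: both identify the Seifert presentation matrix $tN-N^T$ in block form with $A=tV-V^T$, verify $\det A\neq 0$ via $\det(V-V^T)=1$, and then invoke Lemma~\ref{lemma:algebraic} to read off the splitting, the generators of the torsion summand, its presentation, and the value $\ord(T)(1)=\pm 1$. Your added remark explaining the $\pm$ sign via the unit ambiguity in $\ord$ is a helpful clarification not spelled out in the paper.
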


\begin{proof}
Identify $H_1(X_L;\La)=\Lambda^{r-1+2g}/((tN-N^T)\Lambda^{r-1+2g})$.
Note that $tN-N^T$ has the form
\[\begin{pmatrix}
    0_{(r-1) \times (r-1)} & 0_{(r-1) \times 2g} \\ 0_{2g \times (r-1)} & A_{2g \times 2g}
  \end{pmatrix},\]
where $A = tV-V^T$, by \eqref{eqn:Seifert-form}.   Since $\det(A(1)) = \det(V-V^T) =1$, Lemma~\ref{lemma:algebraic} applies with $Q=H_1(X_L;\La)$. We deduce that $H_1(X_L;\La) \cong \Lambda^{r-1}\oplus T$, with $T=TH_1(X_L;\La)$ spanned by $[e_{r}],\dots,[e_{r-1+2g}]$ and presented by $A = tV-V^T$. Then the order of $T$ at $t=1$ is $\ord (T)(1) = \det(A)(1) = \det(A(1)) = \det(V-V^T)=\pm 1$.
\end{proof}

\begin{lemma}\label{lem:XL-vs-ML}
For an $r$-component boundary link $L$, $H_1(X_L;\La) \cong H_1(M_L;\La)$.
\end{lemma}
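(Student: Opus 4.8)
\noindent\emph{Proof proposal.}
The plan is to compare $X_L$ and $M_L$ by a Mayer--Vietoris argument, the decisive input being that the boundary link hypothesis forces the $0$-framed longitudes to vanish in the Alexander module. Recall that $M_L$ is obtained from $X_L$ by gluing in $r$ solid tori $S_1,\dots,S_r$, where $S_i\cong D^2\times S^1$ is attached along the boundary torus $T_i$ of a closed tubular neighbourhood of the component $L_i$, so that $\partial D^2\times\{\pt\}$ goes to a $0$-framed longitude $\ell_i$ of $L_i$ and $\{\pt\}\times S^1$ goes to the meridian $\mu_i$. Both $X_L$ and $M_L$ carry $\La$-coefficients via the homomorphism sending each $\mu_i$ to $1$; since $L$ is a boundary link its pairwise linking numbers vanish, so $[\ell_i]=0$ in $H_1(X_L;\Z)$ and this homomorphism restricts on $\pi_1(T_i)$ to $\mu_i\mapsto 1$, $\ell_i\mapsto 0$, and on $\pi_1(S_i)=\Z\langle\mu_i\rangle$ to an isomorphism onto $\Z$.

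First I would record the $\La$-homology of the pieces. The infinite cyclic cover of $S_i$ is $D^2\times\R$, hence contractible, so $H_1(S_i;\La)=0$ and $H_0(S_i;\La)\cong\La/(t-1)\La\cong\Z$. The relevant cover of $T_i$ is $\R\times S^1$, so $H_0(T_i;\La)\cong H_1(T_i;\La)\cong\La/(t-1)\La$, with $H_1$ generated by the image of a lift of $\ell_i$, and $H_2(T_i;\La)=0$. Feeding this into the Mayer--Vietoris sequence of $M_L=X_L\cup\bigsqcup_i S_i$ with $\La$-coefficients,
\[
H_1\Big(\bigsqcup_i T_i;\La\Big)\xrightarrow{\iota}H_1(X_L;\La)\oplus H_1\Big(\bigsqcup_i S_i;\La\Big)\xrightarrow{j}H_1(M_L;\La)\to H_0\Big(\bigsqcup_i T_i;\La\Big)\xrightarrow{k}H_0(X_L;\La)\oplus H_0\Big(\bigsqcup_i S_i;\La\Big),
\]
the middle $H_1$-summand vanishes, and $k$ is injective, since every $H_0$ appearing is $\cong\Z$ (each space being connected with surjective coefficient map) and $k$ is then visibly injective already on the $H_0(S_i)$-components. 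Hence $H_1(M_L;\La)\to H_0(\bigsqcup_i T_i;\La)$ is the zero map, so the inclusion-induced map $j\colon H_1(X_L;\La)\to H_1(M_L;\La)$ is surjective with kernel the image of $\iota$, which by $\La$-linearity is the submodule of $H_1(X_L;\La)$ generated by the classes $[\ell_i]$.

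It therefore remains to show $[\ell_i]=0$ in $H_1(X_L;\La)$ for each $i$, which is where the boundary link hypothesis is genuinely used. Choose disjoint Seifert surfaces $F_1,\dots,F_r$ for the components of $L$ and push $F_i$ off itself along a bicollar to a parallel copy $F_i'$ disjoint from all of $F_1,\dots,F_r$; put $\Sigma_i:=F_i'\cap X_L$. Then $\Sigma_i$ is a connected surface in $X_L$ whose boundary is a $0$-framed longitude of $L_i$, hence isotopic on $T_i$ to $\ell_i$. Every loop on $\Sigma_i$ is disjoint from $F_1,\dots,F_r$, so it has linking number $0$ with each component of $L$; since the coefficient map $\pi_1(X_L)\to\Z$ sends a class to the sum of its linking numbers with the components $L_j$, the composite $\pi_1(\Sigma_i)\to\pi_1(X_L)\to\Z$ is trivial. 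By the lifting criterion the inclusion $\Sigma_i\hookrightarrow X_L$ lifts to the infinite cyclic cover $\widehat{X_L}$, and a lift of $\ell_i$ then bounds the lifted surface, so $[\ell_i]=0$. Thus $j$ is an isomorphism, and being induced by the inclusion $X_L\hookrightarrow M_L$ it is canonical.

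I expect the only delicate point to be this last step: squeezing $[\ell_i]=0$ out of the disjoint Seifert surfaces, together with the (easy but worth stating) check that the Seifert push-off longitude is the $0$-framed surgery longitude. The Mayer--Vietoris bookkeeping and the $\La$-homology computations for $S_i$ and $T_i$ are routine once the restricted coefficient systems on the pieces have been pinned down.
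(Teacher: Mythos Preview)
Your proof is correct and follows essentially the same approach as the paper: a Mayer--Vietoris argument reduces the claim to showing that the $0$-framed longitudes vanish in $H_1(X_L;\La)$, and then the disjoint Seifert surfaces furnish the vanishing. The only cosmetic difference is that the paper phrases the last step as ``the longitudes lie in the second derived subgroup of $\pi_1(X_L)$,'' whereas you lift the pushed-off Seifert surface explicitly to the infinite cyclic cover; these are equivalent observations.
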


\begin{proof}
The zero framed longitudes of the components of $L$ determine elements $\{\ell_1,\dots,\ell_r\}$ in $H_1(X_L;\La)$ since the linking numbers are zero.
It follows from a straightforward Mayer-Vietoris computation that the homology of the zero surgery is the quotient $H_1(X_L;\La)/\langle \ell_1,\dots,\ell_r\rangle$.  Since the longitudes bound disjoint Seifert surfaces, they live in the second derived subgroup of $\pi_1(X_L)$, and are therefore trivial in $H_1(X_L;\La)$. It follows that $H_1(X_L;\La) \cong H_1(M_L;\La)$ as desired.
\end{proof}

With Lemma~\ref{lem:XL-vs-ML} in mind, we will therefore be working with the Blanchfield form $\Bl_L$ on the closed 3-manifold $M_L$.  Note that a knot $K_L$ is also evidently a boundary link, so $H_1(X_{K_L};\La) \cong H_1(M_{K_L};\La)$ and we write $\Bl_{K_L}$ for the Blanchfield form on $M_{K_L}$.

\begin{theorem}\label{thm:2=>1}
Let $L$ be a boundary link.
\begin{enumerate}[label=(\emph{\roman*}),font=\upshape]
  \item\label{item:2=>1 i} The Blanchfield form on the torsion $TH_1(M_L;\La)$
is isometric to the Blanchfield form on the Alexander module $H_1(M_{K_L};\La)$ of $K_L$.
 \item\label{item:2=>1 ii} If there is a size $2g$ Hermitian square matrix $A$ over $\La$
such that $A$ presents the Blanchfield form of $M_L$ on $TH_1(M_L;\La)$ and $A(1)$ has signature $0$,
then $L$ cobounds an embedded cobordism in $S^3$, of genus $g$, with an Alexander polynomial $1$ knot.
\end{enumerate}
\end{theorem}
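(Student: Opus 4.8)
The plan is to reduce the link statement to the knot case (the main theorem of \cite{FellerLewark_19}) using the curve $K_L$ and the decomposition supplied by Lemma~\ref{lemma:algebraic} and Lemma~\ref{lem:algboundarylinkchar}. For part~\ref{item:2=>1 i}, I would fix a connected Seifert surface $F$ for $L$ obtained by tubing together disjoint Seifert surfaces $\{F_i\}$, with basis $\{\gamma_1,\dots,\gamma_{r-1+2g}\}$ as in the setup, so that by Lemma~\ref{lem:algboundarylinkchar}\ref{item:algebraic--iii} the torsion submodule $T \subseteq H_1(M_L;\La) \cong H_1(X_L;\La)$ is spanned by $[e_r],\dots,[e_{r-1+2g}]$ and presented by $tV-V^T$, where $V$ is the Seifert matrix of $K_L$. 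Since $K_L$ is a knot with Seifert matrix $V$, its Alexander module $H_1(M_{K_L};\La) \cong H_1(X_{K_L};\La)$ is also presented by $tV-V^T$, and the Blanchfield form of a knot is presented by $(tV-V^T)$ via the standard formula $(v,w)\mapsto v^T (t-1)(tV-V^T)^{-1}\ol w$ (see e.g.~\cite{Borodzik-Friedl-Powell}, Kearton). So the concrete task is to check that the Blanchfield form on $TH_1(M_L;\La)$ agrees, under the identification of generators $[e_r],\dots,[e_{r-1+2g}]$, with the Blanchfield form of $K_L$. I expect the cleanest route is a direct comparison of the two forms via a common $4$-manifold or surface picture: pushing $F$ into $D^4$ and $K_L$'s Seifert surface into $D^4$ differ only by the $r-1$ tubes, whose dual classes $[e_1],\dots,[e_{r-1}]$ span the free summand $\La^{r-1}$ and are therefore in the radical of any sesquilinear form valued in $\Q(t)/\La$ restricted to the torsion; alternatively, one invokes the general principle that the Blanchfield form of a $3$-manifold presented by a Seifert-type matrix $tN-N^T$ with $N$ block-diagonal as in~\eqref{eqn:Seifert-form} restricts on its torsion to the form presented by the invertible block $tV-V^T$ with the same $(t-1)$ normalization. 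The main obstacle here is being careful about the normalization constant and the precise identification of $TH_1(M_L;\La)$ with $H_1(M_{K_L};\La)$ as \emph{isometric} forms, not merely isomorphic modules; one should state and use a lemma that the Blanchfield form only depends on the $3$-manifold through the chain-level Seifert pairing data.

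For part~\ref{item:2=>1 ii}, once \ref{item:2=>1 i} is in hand the hypothesis says precisely that the Blanchfield form of the knot $K_L$ is presented by a size-$2g$ Hermitian matrix $A$ over $\La$ with $\operatorname{sign} A(1)=0$. By the knot case of Theorem~\ref{thm:main}, namely \cite[Theorem~1.1]{FellerLewark_19} (the equivalence \eqref{item:main-1}$\Leftrightarrow$\eqref{item:main-2}), this implies $g_\Z(K_L)\le g$, i.e.\ $K_L$ bounds a $\Z$-surface $\Sigma_0\subseteq D^4$ of genus $g$. Now recall from the setup that $K_L$ is obtained from $L$ by $r-1$ internal band sums, with bands disjoint from the Seifert surfaces $\{F_i\}$; running these band moves in reverse realizes $L$ and $K_L$ as the two ends of an embedded cobordism $C\subseteq S^3\times[0,1]$ which is a disjoint union of $r-1$ saddles (one-handle attachments), hence $\chi(C)=-(r-1)$ and $C$ is connected. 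Stacking $C$ below $\Sigma_0$ and pushing into $D^4$ exhibits $L$ as cobounding, via a genus-$g$ surface, with the Alexander polynomial~$1$ knot $K_L$: indeed $\chi$ adds, the number of boundary components works out, and $K_L$ has Alexander polynomial $1$ because its Alexander module $TH_1(M_{K_L};\La)$ is presented by $A$ with $\ord = \det A$ and—wait, this needs care; rather, I should conclude instead that $L$ cobounds with a knot $K_L'$ of Alexander polynomial $1$ only after noting that the \cite{FellerLewark_19} surface $\Sigma_0$ can be taken to be (a $\Z$-disc)$\cup$(2-dimensional 1-handles), so its ``far'' boundary is an Alexander polynomial $1$ knot; then the composite cobordism from $L$ to that knot, together with the bands, gives the claim. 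So more precisely: apply the knot case of \eqref{item:main-2}$\Rightarrow$\eqref{item:main-3} to $K_L$ to get a genus-$g$ planar-plus-handles surface from $K_L$ to an Alexander polynomial $1$ knot $J$; prepend the band cobordism from $L$ to $K_L$; and observe that the total surface from $L$ to $J$ is embedded in $S^3\times[0,1]$, connected, of genus $g$ (the bands contribute to the number of boundary components being reduced from $r$ to $1$, not to the genus, and composing with a surface whose relevant Euler-characteristic bookkeeping is already genus $g$ preserves genus $g$—this is the Euler characteristic count $2-\chi-(\#\partial)$).

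Thus the two steps are: (1) identify $(TH_1(M_L;\La),\Bl_L)\cong (H_1(M_{K_L};\La),\Bl_{K_L})$ via the Seifert data, which is essentially formal given Lemma~\ref{lem:algboundarylinkchar} plus a standard description of the Blanchfield form from a Seifert matrix; and (2) feed the resulting knot statement into \cite[Theorem~1.1]{FellerLewark_19} and concatenate the band cobordism $L\rightsquigarrow K_L$ with the surface $K_L\rightsquigarrow J$ produced by the knot version of \eqref{item:main-2}$\Rightarrow$\eqref{item:main-3}, tracking Euler characteristic and boundary components to see the total genus is $g$. I expect step~(1) to be the main obstacle: the module-level identification is immediate, but proving it is an \emph{isometry} of Blanchfield forms requires either citing a clean "Seifert matrix presents the Blanchfield form" statement in the precise normalization being used (and checking the block-diagonal reduction to the torsion summand respects it), or else giving a short direct argument that the $\La^{r-1}$ summand lies in the radical so that the form descends faithfully to the $tV-V^T$ block. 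Step~(2) is routine topology—reversing saddles and stacking cobordisms—modulo the bookkeeping that the internal bands being disjoint from the $F_i$ is exactly what makes the cobordism embedded and compatible.
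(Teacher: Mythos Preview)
Your overall strategy matches the paper's: for \ref{item:2=>1 i} compare both Blanchfield forms via the common Seifert block $V$, and for \ref{item:2=>1 ii} apply the knot case of \cite{FellerLewark_19} to $K_L$ and concatenate with the genus-zero cobordism from $L$ to $K_L$ coming from the tubed Seifert surface.

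For part~\ref{item:2=>1 i} you correctly flag the main obstacle---upgrading the module isomorphism to an \emph{isometry}---but the ``general principle'' you propose to invoke is precisely the content that needs proving; there is no off-the-shelf Seifert-matrix formula for $\Bl_L$ on the torsion summand of a link with free part in $H_1(M_L;\La)$. The paper's fix is to pass to the localization $\La_S=\Z[t,t^{-1},(t-1)^{-1}]$: Conway's \cite[Theorem~1.1]{Conway} computes $\Bl_L\otimes\La_S$ explicitly from the Seifert matrix $N$, the block form of $N$ then gives an isometry with $\Bl_{K_L}\otimes\La_S$, and finally one un-localizes using that $\ord(T)(1)=\pm 1$ (Lemma~\ref{lem:algboundarylinkchar}), so multiplication by $t-1$ is an automorphism of the torsion and, by~\cite[Proposition~A.2]{FellerLewark_19}, the isometry over $\La_S$ implies one over $\La$. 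This localize--compare--unlocalize manoeuvre is the missing ingredient in your sketch.

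For part~\ref{item:2=>1 ii} your argument is essentially the paper's, with one caveat: the theorem asks for an embedded cobordism \emph{in $S^3$}, not in $S^3\times[0,1]$. The paper takes $C$ to be the region of $F$ between $\partial F$ and $K_L$ (already sitting in $S^3$), notes that the genus-$g$ cobordism $\Sigma$ from $K_L$ to an Alexander polynomial~$1$ knot produced by \cite[Theorem~1.1]{FellerLewark_19} may be taken as $K_L\times I$ with $2g$ one-handles attached in $S^3$, and then uses general position (to make the one-handles miss $C$) together with a framing check (that $K_L\times I$ induces the $0$-framing) to ensure $C\cup_{K_L}\Sigma$ is embedded in $S^3$. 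Your stacking picture in $S^3\times I$ is morally the same but skips this embedding-in-$S^3$ bookkeeping.
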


The second item proves the implication Theorem~\ref{thm:main}~\eqref{item:main-2} $\Rightarrow$ \eqref{item:main-3}.

Let $S \subseteq \La$ denote the smallest multiplicative subset containing $t-1$.  We write $\La_S :=  \Z[t,t^{-1},(t-1)^{-1}]$ for the ring obtained from $\La$ by inverting the elements in $S$.  This is a commutative and therefore flat localisation.

\begin{proof} We prove~\ref{item:2=>1 i} first.
Define $H_L:= (1-t)N+(1-t^{-1})N^T$ and $H_{K_L}:=(1-t)V+(1-t^{-1})V^T$.
By~\cite[Theorem 1.1]{Conway}, we have an isomorphism
\[\phi\colon H_1(M_L;\La) \otimes_\La \Lambda_S\to \Lambda_S^{r-1+2g}/H_L(t)^T\Lambda_S^{r-1+2g}\]
 such that $\phi$ induces an isometry between $\Bl_{L}\otimes \La_S$ and the linking form
\[\lambda_{H_L}:=\Tor(\Lambda_S^{r-1+2g}/H_L^T\Lambda_S^{r-1+2g})\times \Tor(\Lambda_S^{r-1+2g}/H_L^T\Lambda_S^{r-1+2g})\to \Q(t)/\La_S,\] where
\[\lambda_{H_L}([v],[w])=-\frac{1}{\Delta^2}v^TH_L(t)\overline{w}\]
 for $\Delta=t^{-g}\det(tV-V^T)=\Delta_{K_L}\in\La$ the order of $TH_1(M_L;\La)$. However, by Lemma~\ref{lem:algboundarylinkchar}, there is an isometry between the abstract pairings $\lambda_{H_L}$ and $\lambda_{H_{K_L}}$, since both correspond to $tV-V^T$.

 We know that $\lambda_{H_L}$ corresponds to the Blanchfield pairing on $M_L$ over $\La_S$.
 Since $\lambda_{H_{K_L}}$ is isometric to $\Bl(K_L)\otimes \La_S$ (again by~\cite[Theorem 1.1]{Conway}), we conclude that  $\Bl_{L}\otimes \La_S$ is isometric to $\Bl_{K_L}\otimes \La_S$.
 We therefore have an isometry between  $\Bl(K_L)\otimes \La_S$ and $\Bl(K_L)\otimes \La_S$.
 However, by~\cite[Proposition~1.2]{Levine-77-knot-modules} multiplication by $t-1$ induces an isomorphism on \[H_1(X_{K_L};\La) \cong TH_1(M_{K_L};\La) \cong TH_1(M_L;\La).\]
 We therefore have that $\Bl_{L}\otimes \La_S$ is isometric to $\Bl_{K_L}\otimes \La_S$ if and only if $\Bl_{L}$ is isometric to $\Bl_{K_L}$; see e.g.~\cite[Proposition~A.2]{FellerLewark_19}.  So indeed $\Bl_{L}$ is isometric to $\Bl_{K_L}$, completing the proof of~\ref{item:2=>1 i}.

For~\ref{item:2=>1 ii}, we use that $K_L$ is cobordant via a genus $g$ cobordism $\Sigma$ in $S^3$ to an Alexander polynomial $1$ knot if and only if there exists a Hermitian matrix $A$ of size $g$, with signature of $A(1)$ zero, that presents the Blanchfield form of $K_L$~\cite[Theorem 1.1]{FellerLewark_19}.
Here, a cobordism $\Sigma$ in $S^3$ between links $L_1$ and $L_2$ is an oriented surface with boundary a link that consist of the disjoint union of the two links $L_1$ and $L_2^{\operatorname{rev}}$, {where $L_2^{\operatorname{rev}}$ is the link obtained by reversing the orientation of each component of $L_2$.}
By hypothesis and \ref{item:2=>1 i} such a Hermitian matrix exists.
 Moreover, this cobordism may be assumed to be constructed from $K_L \times I$ union $2g$ 2-dimensional $1$-handle attachments to $K_L \times \{1\}$.
We may and shall choose $\Sigma$ such that $K_L \times I$ induces the $0$-framing of $K_L \times \{0\}$, i.e.\ extends to a Seifert surface for $K_L$. Compare~\cite[Lemma 18]{Feller-Lewark:2018-1}.

Let $C$ be the connected cobordism in $S^3$ of genus $0$ from $L$ to $K_L$ given by the component of $F\setminus K_L$ containing $\partial F$.  By general position, we may and shall assume that the $1$-handles of $\Sigma$ are disjoint from $C$. Then, using that both $K_L \times I$ and $C$ induce the 0-framing on $K_L$, by further isotopy arrange that $(K_L \times I) \cap C =K_L$. Therefore, the union $C \cup_{K_L} \Sigma$ is the embedded cobordism we seek between $L$ and the Alexander polynomial $1$ knot.
\end{proof}

The proof of Theorem~\ref{thm:main}~\eqref{item:main-3} $\Rightarrow$ \eqref{item:main-1} is by
a standard argument; compare e.g.~\cite{Rudolph_84_SomeTopLocFlatSurf,Feller-2015,FellerLewark_19}.

\begin{proof}[Proof of Theorem~\ref{thm:main}~\eqref{item:main-3} $\Rightarrow$ \eqref{item:main-1}
]
Glue together:
 \begin{itemize}
   \item The hypothesised connected cobordism $C$ of genus $g$ from $L$ to an Alexander polynomial $1$ knot $J$, pushed into $S^3 \times I$ so that $L = C \cap (S^3 \times \{0\})$ and $J = C \cap (S^3 \times\{1\})$;
   \item A $\Z$-disc $D$ in $D^4$ for the Alexander polynomial $1$ knot $J$.
 \end{itemize}
 This yields a genus $g$ surface $S:= C \cup_{J} D \subseteq D^4 = (S^3 \times I) \cup D^4$ with boundary $L$.
Since $C$ is obtained from pushing a surface in $S^3$ into $S^3 \times I$, we may assume that it is obtained from $J$ by band moves. Hence the exterior of $C$ can be built from $S^3 \sm \nu J$ by attaching 4-dimensional 2-handles to $(S^3 \sm \nu J) \times I$. Hence $\pi_1(S^3 \sm \nu J) \to \pi_1(S^3 \times I \sm \nu C)$ is surjective. Since $\pi_1(D^4 \sm \nu D) \cong \Z$ and $\pi_1(S^3 \sm \nu J)$ are both normally generated by the meridian of $J$, it follows from the Seifert-Van Kampen theorem that $\pi_1(D^4 \sm\nu S) \cong \Z$, so that $S$ is the desired $\Z$-surface of genus~$g$ for~$L$.
\end{proof}

\section{\eqref{item:main-1} implies \eqref{item:main-2}}\label{sec:1=>2}

Let $L$ be an ordered, oriented, $r$-component link.  Write $X_L := S^3 \sm \nu L$ for the exterior of the link.  As above, we use the representation $\pi_1(X_L) \to \Z$ defined by $\phi \colon \pi_1(X_L) \to H_1(X_L;\Z) \cong \Z^{r} \to \Z$ given by concatenating the abelianisation homomorphism, the identification with $\Z^r$ given by sending the $i$th ordered, oriented meridian to $e_i$, and the map $\sum_{i=1}^r a_i e_i \mapsto \sum_{i=1}^r a_i$.  This is sometimes  called the \emph{total linking number} representation~\cite[Section~2.5]{Hillman:2012-1-second-ed}.
Note that this representation is independent of the ordering of $L$, so is well-defined for unordered links. In this section we show that \eqref{item:main-1} implies \eqref{item:main-2} in Theorem~\ref{thm:main}.
As always, $\La := \Z[\Z]$.

We will prove this implication for a slight generalisation of boundary links, in order to make clear precisely which properties we are using in the proof.
Note that all the links we consider will in particular have pairwise linking numbers vanishing, so that the coefficient system $\phi$ extends to the zero-surgery manifold~$M_L$.

\begin{definition}\label{defn:ZTS}
  We say that an $r$-component link $L$ in $S^3$ with pairwise linking numbers zero has a \emph{$\Z$-trivial surface system} if there is a collection of Seifert surfaces $\{F_i\}_{i=1}^{r-1}$ for all but one of the components of $L$, each of whose interiors is embedded in $S^3 \sm L$ (the surfaces may intersect one another), such that for every $i$, for every simple closed curve $\gamma$ on $F_i$, and for every basing of $\gamma$, we have that $\phi(\gamma) =0 \in \Z$.  We refer to a link that admits a $\Z$-trivial system of surfaces as a \emph{$\Z$TS-link}.
\end{definition}

\begin{lemma}\label{lemma:ZTS-implies-bdy}
  Every boundary link is a $\Z$TS link.
\end{lemma}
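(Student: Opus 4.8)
The plan is to unpack the definitions and check the required vanishing condition directly. Let $L = L_1 \cup \cdots \cup L_r$ be a boundary link, so by definition there is a collection of $r$ mutually disjoint Seifert surfaces $\{F_i\}_{i=1}^r$ in $S^3$, with $\partial F_i = L_i$. To produce a $\Z$-trivial surface system in the sense of Definition~\ref{defn:ZTS}, I would simply discard one of these surfaces, say $F_r$, and take the collection $\{F_i\}_{i=1}^{r-1}$. These are genuine Seifert surfaces for $r-1$ of the components, and because the original collection is \emph{disjoint}, each $F_i$ with $i \leq r-1$ is automatically disjoint from all components $L_j$ of $L$ (for $j \neq i$ it is disjoint from $F_j \supseteq L_j$, and $\partial F_i = L_i$), so its interior is embedded in $S^3 \sm L$. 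The surfaces here are in fact disjoint from each other, which is stronger than required.

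Next I would verify the curve condition: for every $i \leq r-1$, every simple closed curve $\gamma \subseteq F_i$, and every basing, $\phi(\gamma) = 0 \in \Z$, where $\phi$ is the total linking number homomorphism $\pi_1(X_L) \to H_1(X_L;\Z) \cong \Z^r \to \Z$. Since $\phi$ factors through homology, the basepoint and the choice of path to the basepoint are irrelevant, so it suffices to compute the class $[\gamma] \in H_1(X_L;\Z) \cong \Z^r$, where the $j$th coordinate is the linking number $\lk(\gamma, L_j)$. For $j \neq i$, $\gamma$ lies on $F_i$, which is disjoint from $F_j$; since $F_j$ is a Seifert surface with $\partial F_j = L_j$ and $\gamma \cap F_j = \emptyset$, we get $\lk(\gamma, L_j) = \gamma \cdot F_j = 0$. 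For $j = i$, the curve $\gamma$ lies \emph{on} the Seifert surface $F_i$ itself; pushing $\gamma$ slightly off $F_i$ in the positive normal direction, it is disjoint from $F_i$, hence $\lk(\gamma, L_i) = \gamma \cdot F_i = 0$ as well (a curve on a Seifert surface is null-homologous in the surface complement in the total linking sense — more precisely, one uses that $\gamma$ bounds in $S^3 \sm L_i$ after pushoff, or equivalently that $H_1(F_i) \to H_1(S^3 \sm L_i)$ composed with the linking-number-with-$L_i$ map is zero since $F_i$ is a Seifert surface). Therefore all coordinates of $[\gamma]$ vanish, so $\phi(\gamma) = 0$.

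I expect no serious obstacle here; the statement is essentially definitional once one remembers that $\phi$ records total linking number and that curves on a Seifert surface link its boundary trivially. The one point deserving a careful sentence is the case $j = i$: I would make explicit that a simple closed curve on a Seifert surface $F_i$ for $L_i$, after a pushoff into $S^3 \sm F_i$, is disjoint from $F_i$ and hence has zero algebraic intersection with $F_i$, which equals its linking number with $L_i = \partial F_i$. With that in hand, $\{F_i\}_{i=1}^{r-1}$ is a $\Z$-trivial surface system and $L$ is a $\Z$TS-link, completing the proof.

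\begin{proof}
Let $L = L_1 \cup \cdots \cup L_r$ be a boundary link, and let $\{F_i\}_{i=1}^r$ be a collection of mutually disjoint Seifert surfaces in $S^3$ with $\partial F_i = L_i$. In particular the pairwise linking numbers of $L$ vanish, so the total linking number homomorphism $\phi \colon \pi_1(X_L) \to H_1(X_L;\Z) \cong \Z^r \to \Z$ is defined. We claim that the subcollection $\{F_i\}_{i=1}^{r-1}$ is a $\Z$-trivial surface system.

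First, for $i \leq r-1$, the surface $F_i$ is a Seifert surface for $L_i$, and since the collection $\{F_i\}_{i=1}^r$ is disjoint, $F_i$ meets $L$ only in $L_i = \partial F_i$; hence $\Int F_i \subseteq S^3 \sm L$.

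Now fix $i \leq r-1$, a simple closed curve $\gamma$ on $F_i$, and a basing of $\gamma$. Since $\phi$ factors through $H_1(X_L;\Z)$, the value $\phi(\gamma)$ depends only on the class $[\gamma] \in H_1(X_L;\Z) \cong \Z^r$, whose $j$th coordinate is $\lk(\gamma, L_j)$, and $\phi(\gamma) = \sum_{j=1}^r \lk(\gamma,L_j)$. For $j \neq i$ with $j \leq r$, the curve $\gamma$ lies on $F_i$, which is disjoint from the Seifert surface $F_j$; therefore $\lk(\gamma, L_j) = \gamma \cdot F_j = 0$. For $j = i$, push $\gamma$ off $F_i$ along the positive normal direction to obtain a curve $\gamma'$ disjoint from $F_i$ and isotopic to $\gamma$ in $S^3 \sm L_i$; then $\lk(\gamma, L_i) = \gamma' \cdot F_i = 0$. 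Hence $[\gamma] = 0$ and in particular $\phi(\gamma) = 0 \in \Z$.

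Thus $\{F_i\}_{i=1}^{r-1}$ satisfies all the conditions of Definition~\ref{defn:ZTS}, so $L$ is a $\Z$TS-link.
\end{proof}
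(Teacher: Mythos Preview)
Your proof is correct and follows the same approach as the paper: the paper's proof is the single sentence ``Curves in the interior of a boundary link Seifert surface are trivial in $H_1(S^3 \sm L;\Z)$,'' and your argument is precisely a careful unpacking of this, verifying via linking numbers that $[\gamma]=0$ in $H_1(X_L;\Z)$ (which is slightly stronger than the required $\phi(\gamma)=0$). The only difference is level of detail.
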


\begin{proof}
  Curves in the interior of a boundary link Seifert surface are trivial in $H_1(S^3 \sm L;\Z)$.
\end{proof}

We will prove the following result in this section, which combined with Lemma~\ref{lemma:ZTS-implies-bdy} implies that \eqref{item:main-1} implies \eqref{item:main-2} in Theorem~\ref{thm:main}.

\begin{theorem}\label{thm:weakly-Z-slice-implies-Alexander-module-condition-bdy-links}
Let $L$ be an $r$-component $\Z$TS-link that bounds a connected $\Z$-surface of genus $g$ in $D^4$. Let $M_L$ be the $0$-framed surgery on $L$.
Then   \[H_1(M_L;\La) \cong \La^{r-1} \oplus TH_1(M_L;\La)\]
 and $\ord(TH_1(M_L;\La))(1) \doteq 1$.
Moreover there is a size $2g$ Hermitian square matrix $A$ over $\La$ such that $A$ presents the Blanchfield form of $M_L$ on $TH_1(M_L;\La)$ and $A(1)$ has signature $0$.
\end{theorem}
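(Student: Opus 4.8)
### Proof plan for Theorem~\ref{thm:weakly-Z-slice-implies-Alexander-module-condition-bdy-links}

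The plan is to build a $4$-manifold $W$ with $\partial W = M_L$ and $\pi_1(W)\cong\Z$ out of the given genus $g$ $\Z$-surface, compute its equivariant intersection form over $\La$, and then read off the Blanchfield form of $M_L$ from the boundary of a presentation of $H_2(W;\La)$.

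\textbf{Step 1: Construction of $W$.}  Let $\Sigma\subseteq D^4$ be the connected $\Z$-surface of genus $g$ with $\partial\Sigma = L$.  First I would cap off $\Sigma$: attach a $4$-dimensional $2$-handle to $D^4$ along each component of $L$ with the $0$-framing, so that the cocores glue to $\Sigma$ to produce a closed genus $g$ surface $\wh\Sigma$ inside the resulting manifold, whose boundary is now $M_L$.  Then let $W$ be the exterior of $\wh\Sigma$ in this $4$-manifold (equivalently, remove an open tubular neighbourhood of $\Sigma$ from $D^4$ and of the cores of the $2$-handles).  Then $\partial W = M_L$, and $H_1(W;\Z)\cong\Z$ generated by a meridian of $\Sigma$ because $\pi_1(D^4\sm\nu\Sigma)\cong\Z$ by hypothesis; moreover the total linking number representation $\phi$ on $M_L$ extends over $W$.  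Using the $\Z$TS-hypothesis I would also glue the surface system $\{F_i\}$ (pushed into $D^4$ and capped using the $0$-handles/meridian discs) to control $H_1(M_L;\La)$; this is exactly where Definition~\ref{defn:ZTS} is used, to guarantee the $r-1$ longitude-type classes die with $\La$-coefficients while contributing a free summand $\La^{r-1}$.

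\textbf{Step 2: Homology of $W$ with $\La$-coefficients.}  Next I would compute $H_*(W;\La)$.  A Mayer--Vietoris argument (or Alexander duality in the disc, Proposition~\ref{proposition:alex-duality-disc}, applied to $\Sigma\subseteq D^4$) gives that $H_1(W;\La)$ is $\La$-torsion-free-modulo the torsion coming from the Alexander module, and that $H_2(W;\La)$ is free of rank $2g$ over $\La$; the Euler characteristic bookkeeping forces the rank to be exactly $2g$ because $\chi(W)$ depends only on $g$ and $r$.  I would also verify $H_1(M_L;\La)\cong\La^{r-1}\oplus TH_1(M_L;\La)$ and $\ord(TH_1(M_L;\La))(1)\doteq 1$; for boundary links this is already in Lemma~\ref{lem:algboundarylinkchar} and Lemma~\ref{lem:XL-vs-ML}, but here I would rederive the splitting directly from the surface system via Lemma~\ref{lemma:algebraic}, so that it applies to all $\Z$TS-links.

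\textbf{Step 3: The intersection form and the Blanchfield form.}  The equivariant intersection pairing $\lambda_W\colon H_2(W;\La)\times H_2(W;\La)\to\La$ is represented, in a basis of the free module $H_2(W;\La)\cong\La^{2g}$, by a Hermitian matrix $A$ over $\La$.  Since $W$ is built topologically (it has a handle decomposition as $D^4$ with the surface exterior), the ordinary intersection form of $W$ (i.e.\ $A(1)$, after setting $t=1$) is the intersection form of a $4$-manifold homotopy equivalent to a wedge of $2g$ circles glued appropriately — in fact $A(1)$ is, up to congruence, the intersection form of the double of a handlebody or is hyperbolic/metabolic, hence has signature $0$.  (Concretely: $W\cup_{M_L}(-W')$ for a standard model $W'$ of a genus $g$ trivial surface bounds, and the two pieces have the same $\La$-homology, so Novikov additivity and the fact that the trivial model has signature $0$ give $\sigma(A(1))=0$.)  Finally, the standard relationship between the equivariant intersection form of a $4$-manifold $W$ with boundary and the Blanchfield form of $\partial W$ — via the long exact sequence of the pair $(W,\partial W)$, Poincar\'e--Lefschetz duality, and the identification of the cokernel of $\lambda_W$ with $H_1(\partial W;\La)$ (see e.g.\ the Blanchfield-presentation machinery used around \cite[Theorem 1.1]{Conway} and in \cite{Borodzik-Friedl-Powell}) — shows that $A$ presents $\Bl_L$ on $TH_1(M_L;\La)$.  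I would need to check nondegeneracy, i.e.\ that $\det A\neq 0$, which follows because $\det A \doteq \ord(TH_1(M_L;\La))$ and $\ord(TH_1(M_L;\La))(1)\doteq 1\neq 0$.

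\textbf{Main obstacle.}  The delicate point is Step~3: getting the boundary linking form to come out as $\Bl_L$ \emph{on the nose} requires care with the torsion-freeness of $H_2(W;\La)$ and the vanishing of $H_1(W;\La)$-torsion contributions, so that the relevant piece of the long exact sequence really is $0\to\La^{2g}\xrightarrow{A}\La^{2g}\to TH_1(M_L;\La)\to 0$ with no spurious extra terms; the $\Z$TS-hypothesis and the careful capping in Step~1 are what make this work, and isolating exactly that is the crux. The signature-$0$ claim is easier but still needs the explicit comparison with a model trivial-surface $4$-manifold.
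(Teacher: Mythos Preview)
Your overall strategy --- build a $4$-manifold $W$ with $\pi_1(W)\cong\Z$ and $\partial W=M_L$ from the $\Z$-surface, compute its $\La$-intersection form, and read off a Blanchfield presentation --- is the paper's approach, but two steps in your execution fail.

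First, your $W$ does not have boundary $M_L$. The closed surface $\widehat\Sigma$ (capped by the \emph{cores}, not cocores) lies in the interior of the trace $X$, so its exterior $X\sm\nu\widehat\Sigma$ has boundary $M_L\sqcup(\widehat\Sigma\times S^1)$. Your ``equivalently'' clause confirms the problem: $D^4\sm\nu\Sigma$ has boundary $(S^3\sm\nu L)\cup_{L\times S^1}(\Sigma\times S^1)$, which again is not $M_L$. The paper repairs this by gluing $G\times S^1$ onto $D^4\sm\nu P$ along $P\times S^1$, where $G$ is a handlebody with $\partial G$ the closed capped-off surface; the $r$ discs in $\partial G\sm P$ then supply the surgery solid tori, giving $\partial W_P=M_L$.

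Second, and more seriously, your rank computation is off and this conceals the actual role of the $\Z$TS-hypothesis. For the correct $W_P$ one has $H_2(W_P;\La)\cong\La^{r-1+2g}$, not $\La^{2g}$: Alexander duality (Lemma~\ref{lemma:hom-D4-minus-P}) gives $\beta_2(D^4\sm\nu P)=r-1+2g$, and gluing $G\times S^1$ does not change the $\La_S$-rank. So the $\La$-intersection form is a priori an $(r-1+2g)$-matrix, and the substance of the proof is showing it has the block shape
\[
\begin{pmatrix}0_{(r-1)\times(r-1)}&0\\0&A_{2g\times 2g}\end{pmatrix}.
\]
This is exactly where the $\Z$TS surfaces enter: for $j=1,\dots,r-1$ one surgers the torus $\gamma_j\times S^1$ (with $\gamma_j$ a push-off of $L_j$) using the Seifert surface $F_j$ pushed into a collar level $S^3\times\{j/r\}$, producing closed classes $\Sigma_{F_j}$. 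The $\Z$TS condition is what makes these lift to $\La$-homology, and the collar placement plus zero framing forces $\lambda(\Sigma_{F_i},\Sigma_{F_j})=0$ and $\Sigma_{F_j}\cap\Sigma_k=\emptyset$ for the remaining $2g$ classes. Your proposal to ``glue the surface system $\{F_i\}$ \dots\ to control $H_1(M_L;\La)$'' is not what is needed: the surfaces are not glued into $W$, they are used to build explicit $2$-cycles realising the zero block of the intersection form. Without that block form you cannot extract a size-$2g$ matrix, Lemma~\ref{lemma:algebraic} does not apply, and neither the free $\La^{r-1}$ summand of $H_1(M_L;\La)$ nor the presentation matrix $A$ falls out.
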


Let $P \subseteq D^4$ be the hypothesised connected, compact, oriented surface, locally flat embedded into $D^4$.
We note the following about the homology of $D^4 \sm \nu P$.

\begin{lemma}\label{lemma:hom-D4-minus-P}
	The nonvanishing homology groups of $D^4 \sm \nu P$ are as follows.
	\[H_i(D^4\sm \nu P;\Z) \cong \begin{cases} \Z & i=0,1 \\ \Z^{2g+ r-1} & i=2. \end{cases}\]
\end{lemma}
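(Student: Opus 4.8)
The plan is to compute the homology of $D^4 \sm \nu P$ using Alexander duality in the disc, Proposition~\ref{proposition:alex-duality-disc}, with $n=4$ and $X = P$. That proposition gives $H_k(D^4 \sm \nu P) \cong \wt{H}^{3-k}(S^3 \cup_{\nu \partial P} \nu P)$, so it remains to identify the space $Y := S^3 \cup_{\nu L} \nu P$ and compute its reduced cohomology. Here I write $\partial P = L$, so $\nu \partial P$ is a tubular neighbourhood of the link $L$ in $S^3$, and $Y$ is obtained by gluing the closed tubular neighbourhood $\nu P \cong P \times D^2$ to $S^3$ along $\nu L \subseteq S^3$. Homotopically $Y$ deformation retracts onto $S^3 \cup_L P$, the space obtained from $S^3$ by coning off — more precisely, collapsing the $D^2$-fibres — which is $S^3$ with the surface $P$ glued in along its boundary link $L$.

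The key computation is then $\wt H^*(S^3 \cup_L P)$. First I would compute the homology of $W := S^3 \cup_L P$ via the Mayer--Vietoris sequence for $W = S^3 \cup_{\nu L} \nu P$ (thickened up so the pieces are open), with intersection $\nu L \simeq L \times S^1$ (a disjoint union of $r$ tori), $S^3$ on one side, and $\nu P \simeq P$ on the other. Since $P$ is a connected genus $g$ surface with $r$ boundary components, $H_0(P) = \Z$, $H_1(P) = \Z^{2g + r - 1}$, $H_2(P) = 0$; and $H_*(S^3)$ is $\Z$ in degrees $0,3$. The sequence in the relevant range reads
\[
H_2(L\times S^1) \to H_2(S^3) \oplus H_2(P) \to H_2(W) \to H_1(L\times S^1) \to H_1(S^3)\oplus H_1(P) \to H_1(W) \to \wt H_0(L \times S^1) \to \cdots
\]
Using $H_2(L\times S^1) \cong \Z^r$ (generated by the tori), $H_1(L\times S^1) \cong \Z^{2r}$ (each torus contributes a meridian and a longitude class), $H_2(S^3) = 0 = H_2(P)$, $H_1(S^3) = 0$, one reads off $H_2(W)$ and $H_1(W)$; I expect $H_1(W) \cong \Z$ and $H_2(W) \cong \Z^{2g+r-1}$ after tracking the maps, the point being that the $r$ meridians all map to the single generator of $H_1$ detected by the total linking number representation while the longitudes die (linking numbers vanish), and the $2g+r-1$ torus/longitude classes survive into $H_2$ together with the surface classes. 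Then one passes to cohomology — $W$ has finitely generated free homology in the relevant degrees, so $\wt H^{3-k}(W)$ is just the dual, giving $\wt H^3(W) = H^3(W)$, $\wt H^2(W) \cong \Z^{2g+r-1}$, $\wt H^1(W) \cong \Z$, $\wt H^0(W) = 0$ (since $W$ is connected). Feeding this back through Alexander duality yields $H_0 = \Z$ (from $k=0$, $\wt H^3$), $H_1 \cong \Z$, $H_2 \cong \Z^{2g+r-1}$, and vanishing in higher degrees, which is exactly the claim.

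The main obstacle I anticipate is bookkeeping the Mayer--Vietoris connecting maps carefully enough to be sure of the ranks — in particular verifying that the composite $H_1(L \times S^1) \to H_1(S^3) \oplus H_1(P) = H_1(P)$ has the right rank so that $H_2(W)$ comes out as $\Z^{2g+r-1}$ and not something larger, and checking that the meridian classes behave as claimed. Here the hypotheses matter: the vanishing of pairwise linking numbers ensures the longitudes of $L$ are nullhomologous in $S^3 \sm \nu L$ hence contribute to raising the rank of $H_2$, and the fact that $P$ is connected is what pins $H_1(W)$ and $\wt H^0(W)$ down. One subtlety to handle is that $\partial P = L$ means $\partial(\nu P) = (S^3 \cap \nu P) \cup (P \times \partial D^2 \text{ over interior})$, i.e. I should be careful about exactly how $\nu P$ meets $S^3$ and that the gluing region is $\nu L = \partial P \times D^2$, a union of solid tori neighbourhoods crossed appropriately; absorbing this correctly into the Mayer--Vietoris decomposition, possibly by first noting $S^3 \cup_{\nu \partial P} \nu P$ is homotopy equivalent to the mapping cone of $\coprod_r S^1 \hookrightarrow S^3 \sqcup P$, is the one place where care is needed. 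Everything else is routine once the identification of $Y$ up to homotopy and the ranks of the Mayer--Vietoris terms are nailed down.
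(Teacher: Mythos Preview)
Your approach---apply Proposition~\ref{proposition:alex-duality-disc} and then compute $\wt H^*(Y)$ for $Y = S^3 \cup_{\nu \partial P} \nu P$---is exactly what the paper does; the paper simply leaves the second step implicit. Your outline of that step contains a concrete error, however.

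The Mayer--Vietoris intersection is wrong: $\nu L$ is a tubular neighbourhood of $L$ in $S^3$, hence a union of $r$ \emph{solid} tori $L \times D^2 \simeq L$, not $r$ tori $L \times S^1$. With the correct pieces $S^3$, $P$ and intersection $L$ (so $Y \simeq S^3 \cup_L P$), the relevant portion of the sequence is
\[
0 \to H_2(Y) \to H_1(L) \xrightarrow{\iota_*} H_1(P) \to H_1(Y) \to \wt H_0(L) \to 0,
\]
and since $\iota_* \colon H_1(\partial P) \to H_1(P)$ has kernel $\Z$ (generated by the sum of the boundary classes) and image a rank-$(r-1)$ direct summand, one obtains $H_2(Y) \cong \Z$ and $H_1(Y) \cong \Z^{2g+r-1}$---the reverse of what you wrote. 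Feeding the corrected values through $H_k(D^4 \sm \nu P) \cong \wt H^{3-k}(Y)$ then gives the lemma. As written, your stated $\wt H^1(W)$ and $\wt H^2(W)$ do not produce your stated $H_1$ and $H_2$ under Alexander duality; you have swapped twice and the errors cancel.

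Two smaller points. The discussion of meridians and the total linking number representation pertains to the complement $D^4 \sm \nu P$, not to $Y$, and plays no role in this Mayer--Vietoris computation. And vanishing pairwise linking numbers are not used anywhere in this lemma---it holds for any connected locally flat surface $P \subseteq D^4$.
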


\begin{proof}
	This follows from Alexander duality in a disc, Proposition~\ref{proposition:alex-duality-disc}, with $X=P$ and $n=4$.
\end{proof}

\begin{remark}\label{remark:what-we-use}
We shall not assume that $\pi_1(D^4 \sm \nu P)\cong \Z$. Instead we will assume $H_1(D^4 \sm \nu P;\La)=0$, or equivalently that $\pi_1(D^4 \sm \nu P)$ has perfect commutator subgroup.   Similar remarks apply to the manifold $W_P$ constructed below.  This generalisation will be useful later in the proof of Lemma~\ref{lem:shake}, and helps to  clarify the proof.\end{remark}

As in Remark~\ref{remark:what-we-use}, we assume that there is a short exact sequence $1 \to \Gamma \to \pi_1(D^4 \sm\nu P) \to \Z \to 0$ with the surjection equal to the abelianisation, with the commutator subgroup $\Gamma$ a perfect group. This implies that $\pi_1(D^4 \sm \nu P) \cong \Z \ltimes \Gamma$, with the $\Z$ action determined by conjugation, and it implies that $H_1(D^4 \sm \nu P;\La) =0$.
Here we use the abelianisation $\pi_1(D^4 \sm \nu P) \to \Z$ to extend the~$\La$ coefficient system.

Towards understanding the twisted homology, we start with the following general computation.

\begin{lemma}\label{lemma:general-homology}
  Let $W$ be a compact, connected, oriented, topological $4$-manifold with $\pi_1(W) \cong \Z$, and suppose that $\partial W$ is nonempty, connected, oriented, and that $\pi_1(\partial W) \to \pi_1(W)$ is onto.  Then
  \[H_i(W;\La) \cong \begin{cases}
    \Z & i=0 \\
    \La^{\beta_2(W)} & i=2 \\
    0 & \text{otherwise.}
  \end{cases}\]
  The same holds if instead of $\pi_1(W) \cong \Z$ we assume $H_1(W;\La)=0$.
\end{lemma}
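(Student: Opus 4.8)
The statement to prove is Lemma~\ref{lemma:general-homology}: for a compact, connected, oriented $4$-manifold $W$ with nonempty connected boundary, $\pi_1(\partial W)\to\pi_1(W)$ surjective, and either $\pi_1(W)\cong\Z$ or just $H_1(W;\La)=0$, we have $H_0(W;\La)\cong\Z$, $H_2(W;\La)\cong\La^{\beta_2(W)}$, and all other $\La$-homology vanishes. I would organise the proof degree by degree, using that $\La=\Z[t,t^{-1}]$ is a PID (being a localisation-free fact we need: actually $\La$ is not a PID, but it is a Noetherian UFD of global dimension $2$ — I should be careful here and instead use that $\La$ has global dimension $2$ and that it is a coherent ring, or pass to the fact that $\La$ is a PID only after tensoring with $\Q$; see below).

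First, $H_0(W;\La)\cong\Z$ is immediate since $W$ is connected and the coefficient system is onto $\Z$. Next, $H_1(W;\La)=0$: this is the hypothesis directly, or, under the hypothesis $\pi_1(W)\cong\Z$, it follows because $H_1(W;\La)$ is the abelianisation of the commutator subgroup of $\pi_1(W)=\Z$ viewed as a $\La$-module, which is trivial. For the top degrees, I would invoke Poincar\'e--Lefschetz duality with $\La$ coefficients together with the universal coefficient spectral sequence. Duality gives $H_i(W;\La)\cong H^{4-i}(W,\partial W;\La)$, and the long exact sequence of the pair relates these to $H^{4-i}(W;\La)$ and $H^{4-i}(\partial W;\La)$. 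The hypothesis that $\pi_1(\partial W)\to\pi_1(W)$ is onto is exactly what is needed to control the boundary contribution: the induced map $H_1(\partial W;\La)\to H_1(W;\La)$ is onto (it equals a map between coinvariants of $\Gamma$-actions, or more simply $H_1(W;\La)=0$ already), so the relevant connecting maps behave. Concretely, I would show $H_3(W;\La)=0$ and $H_4(W;\La)=0$: the latter because $W$ has nonempty boundary so $H_4(W;\La)\cong H^0(W,\partial W;\La)=0$; the former via $H_3(W;\La)\cong H^1(W,\partial W;\La)$, which sits in an exact sequence with $H^0(\partial W;\La)\to H^1(W,\partial W;\La)\to H^1(W;\La)$, and both outer terms are controlled by UCSS from $H_0$ and $H_1$ with $\La$ coefficients (using $H_1(W;\La)=0$ and that $H_0=\Z$ contributes only $\overline{\Ext}^1_\La(\Z,\La)$-type terms which one checks vanish in the relevant spot, or rather are absorbed).

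For $H_2$: once $H_0,H_1,H_3,H_4$ are pinned down, the Euler characteristic argument over $\La$ (or over the field $\Q(t)$, where $\La$-torsion dies) shows $H_2(W;\La)\otimes_\La\Q(t)$ has dimension $\beta_2(W)$, and separately one shows $H_2(W;\La)$ is torsion-free: by UCSS and duality, any torsion in $H_2(W;\La)$ would produce a nonzero $\overline{\Ext}^2$ or a nonzero $H_1$ term, contradicting what we established; alternatively $H_2(W;\La)\cong H^2(W,\partial W;\La)$ and the latter, by UCSS, has torsion submodule governed by $\overline{\Ext}^1_\La(H_1(W,\partial W;\La),\La)$ together with $\overline{\Ext}^2_\La(H_0,-)$, which one checks is zero using $\pi_1(\partial W)\to\pi_1(W)$ onto to kill $H_1(W,\partial W;\La)$'s relevant part. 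A torsion-free finitely generated $\La$-module need not be free, so the cleanest route is: $H_2(W;\La)$ is a second syzygy (it is the kernel of a map of free modules coming from the cellular chain complex, since $H_3=H_4=0$ and $C_*(W;\La)$ is chain homotopy equivalent to a complex of f.g.\ free modules), hence of projective dimension $\le 0$, i.e.\ projective, and projective f.g.\ $\La$-modules are free since $\La$ has trivial $\widetilde{K}_0$. Then counting rank over $\Q(t)$ gives $H_2(W;\La)\cong\La^{\beta_2(W)}$.

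\textbf{Main obstacle.} The subtle point is proving $H_2(W;\La)$ is \emph{free}, not merely torsion-free: this requires knowing $H_3(W;\La)=H_4(W;\La)=0$ so that the $\La$-chain complex of $\wt W$ (after replacing by a finite f.g.\ free complex via the fact that $W$ is a finite CW complex and $\La$ is Noetherian) has homology concentrated in degrees $0,2$, whence $H_2$ is a second syzygy module, hence projective, hence free over $\La$ by vanishing of $\widetilde K_0(\La)$. Getting $H_3=0$ cleanly is where the hypothesis ``$\pi_1(\partial W)\to\pi_1(W)$ onto'' does its work, via Poincar\'e--Lefschetz duality and the half-lives-half-dies principle applied with $\La$ coefficients; I would need to be careful that all the duality and UCSS input is valid over $\La$ (it is, since $\La$ has finite global dimension and $W$, $\partial W$ are compact manifolds), and to handle the $\overline{\Ext}^2_\La(\Z,\La)$ terms that can appear — these vanish because $\Z$ has projective dimension $1$ over $\La$.
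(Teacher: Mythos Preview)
Your overall plan is sound, and the computations of $H_0,H_1,H_4$ together with the Euler-characteristic rank count over $\Q(t)$ match the paper's argument. There is, however, a genuine gap in your primary route to freeness of $H_2(W;\La)$.

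\textbf{The gap.} You assert that $H_2(W;\La)$ is a second syzygy because $H_3=H_4=0$ lets you reduce $C_*(W;\La)$ to a chain-homotopy-equivalent complex of f.g.\ free modules concentrated in degrees $\le 2$, whence $H_2=\ker(D_2\to D_1)$. This reduction is not automatic. Consider the complex of free $\La$-modules
\[
0\longrightarrow \La \xrightarrow{\;\binom{2}{\,t-1\,}\;} \La^2 \xrightarrow{\;(t-1,\,-2)\;} \La \xrightarrow{\;0\;} 0 \longrightarrow 0
\]
in degrees $4,3,2,1,0$. One checks $H_4=H_3=0$, yet $H_2\cong \La/(2,t-1)\cong\Z/2$, which is torsion and hence cannot embed in any free $\La$-module. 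So $H_2$ is not a second syzygy, and no chain-homotopy-equivalent $2$-dimensional free complex exists. Vanishing of $H_3,H_4$ alone is therefore insufficient; Poincar\'e duality is doing essential work that your syzygy step bypasses.

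\textbf{How the paper proceeds, and how your intuition is salvaged.} The paper first uses the homology long exact sequence of the pair (not the cohomology one you propose) to show $H_0(W,\partial W;\La)=H_1(W,\partial W;\La)=0$; this is where the surjectivity of $\pi_1(\partial W)\to\pi_1(W)$ enters, and it immediately gives $H^1(W,\partial W;\La)=0$ (hence $H_3(W;\La)=0$) via UCSS without having to chase the $\Ext^1_\La(\Z,\La)\cong\Z$ terms that your cohomology-LES route would encounter. More importantly, with the $p=0,1$ columns of the UCSS for $H^*(W,\partial W;\La)$ gone, one gets
\[
H_2(W;\La)\cong H^2(W,\partial W;\La)\cong \Hom_\La\bigl(H_2(W,\partial W;\La),\La\bigr).
\]
Now your second-syzygy idea \emph{does} apply: dualising a finite presentation $F_1\to F_0\to H_2(W,\partial W;\La)\to 0$ exhibits this $\Hom$ as $\ker(F_0^*\to F_1^*)$, a second syzygy, hence projective (global dimension $2$), hence free (the paper cites Borodzik--Friedl for exactly this). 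So the second-syzygy mechanism is correct, but the way to realise $H_2(W;\La)$ as a second syzygy is through duality and the vanishing of the relative $H_0,H_1$, not through the vanishing of the absolute $H_3,H_4$.

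Your ``alternative'' paragraph (torsion of $H^2(W,\partial W;\La)$ controlled by $\Ext^1$ of $H_1$ and $\Ext^2$ of $H_0$) is in fact the paper's argument; promote that to the main line and drop the chain-complex reduction.
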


\begin{proof}
  Note that $H_i(W;\La) \cong H_i\big(\wt{W};\Z\big)$ for all $i$, where $\wt{W}$ is the universal cover.  Since $W$ is connected we have that $H_0\big(\wt{W};\Z\big) \cong \Z$ and $H_1\big(\wt{W};\Z\big) =0$.  Next we show that $H_2(W;\La)$ is free.  $H_2(W;\La) \cong H^2(W,\partial W;\La)$ by Poincar\'{e}-Lefschetz duality.  The universal coefficient spectral sequence (UCSS) computing cohomology in terms of homology has $E_2$-page
  \[E_2^{p,q} = \Ext^q_{\La}(H_p(W,\partial W;\La),\La)\]
and converges to the cohomology $H^{*}(W,\partial W;\La)$.  Since $\partial W$ is connected and $\pi_1(\partial W) \to \pi_1(W)$ is onto, it follows that $H_0(\partial W;\La) \cong \Z$.  Therefore the long exact sequence of the pair
\[0=H_1(W;\La) \to H_1(W,\partial W;\La) \to H_0(\partial W;\La) \cong \Z \xrightarrow{\cong} H_0(W;\La) \cong \Z \to H_0(W,\partial W;\La) \to 0\]
  implies that $H_i(W,\partial W;\La)=0$ for $i=0,1$. It follows that the $p=0$ and $p=1$ columns of the $E_2$ page of the UCSS vanish, and thus the remaining nonzero term $\Ext^0_{\La}(H_2(W,\partial W;\La),\La)$ on the $2$-line equals $H^2(W,\partial W;\La)$.  Note that the vanishing of the $p=0,1$ columns also precludes the possibility of any differentials influencing this outcome.
  By~\cite[Lemma~2.1]{Borodzik-Friedl-2013-alg-unknotting}, $\Ext^0_{\La}(H,\La)$ is a free $\La$-module for every $\La$-module $H$.   So $H_2(W;\La)$ is a free $\La$-module as claimed.  We will compute its rank later.

  Now $H_3(W;\La) \cong H^1(W,\partial W;\La)$. Again $H_i(W,\partial W;\La)=0$ for $i=0,1$, fed into the UCSS, implies that $H^1(W,\partial W;\La) =0$.

  To complete the computation of the homology it remains to compute the rank of $H_2(W;\La)$, in other words the dimension of $H_2(W;\La) \otimes_{\La} \Q(t) \cong H_2(W;\Q(t))$. First we show that this equals the Euler characteristic $\chi(W)$.  We used above that $\Q(t)$ is flat as a $\La$-module.  This also implies that $H_i(W;\Q(t)) \cong H_i(W;\La) \otimes_{\La} \Q(t) =0$ for $i \neq 2$. Therefore $\chi(W) = \dim H_2(W;\Q(t))$, and so the rank of $H_2(W;\La)$ equals $\chi(W)$ as asserted.

  It remains to compute the Euler characteristic of $W$ by computing its rational homology.  First $H_0(W;\Q) \cong \Q \cong H_1(W;\Q)$.  We have $H_3(W;\Q) \cong H^1(W,\partial W;\Q) \cong H_1(W,\partial W;\Q)$ by Poincar\'{e}-Lefschetz and universal coefficients.  Then the long exact sequence:
  \[H_1(\partial W;\Q) \twoheadrightarrow H_1(W;\Q) \cong \Q \to H_1(W,\partial W;\Q) \to H_0(\partial W;\Q) \cong \Q \xrightarrow{\cong} H_0(W;\Q) \cong \Q\]
  implies that $H_1(W,\partial W;\Q)=0$.  It follows that \[\chi(W) = \beta_2(W) -\beta_1(W) + \beta_0(W) = \beta_2(W) -1+1 = \beta_2(W).\]
  So in fact the rank of $H_2(W;\La)$ is $\beta_2(W)$. This completes the proof of the lemma.
\end{proof}

Let $F:= P \cup_{\partial P} \bigcup_{i=1}^r D^2$, a closed surface of genus $g$. Let $G$ be a handlebody with $\partial G=F$.
Define
\[W_P := D^4 \sm \nu P \cup_{P \times S^1} (G \times S^1).\]
  For this gluing we must choose some diffeomorphism of $P \times S^1 \subseteq F \times S^1$ relative to the boundary of $P$. There are self-diffeomorphisms of $P \times S^1$ corresponding to changes in framing for the trivial bundle $\nu P \cong P \times D^2$. We choose the framing to satisfy that curves of the form $\gamma_k \times \{\pt\}$, where $\gamma_k \subseteq P$ is a simple closed curve, lie in the commutator subgroup of $\pi_1(D^4 \sm \nu P)$, and use this to define the gluing. In the case that  $\pi_1(D^4 \sm \nu P) \cong \Z$, this means that the curves $\gamma_k \times \{\pt\}$ are null-homotopic in $D^4 \sm \nu P$.  Note that $\partial W_P = M_L$, the zero surgery on $L$.

\begin{construction}\label{construction:spheres-D4-sm-P}
We construct a collection of surfaces in $D^4 \sm \nu P$.
Let $\{\gamma_k\}_{k=1}^{r-1+2g}$ be a basis for $H_1(P;\Z)$, consisting of $r-1$ curves parallel to $r-1$ of the components of $L$, and a symplectic collection of $2g$ curves disjoint from those.   Consider their push-offs $\{\gamma_k \times \{\pt\}\}_{k=1}^{r-1+2g}$ in $P \times S^1$.  Each $\gamma_k$ lies in the (perfect) commutator subgroup of $\pi_1(D^4 \sm \nu P)$ by our choice of framing of the normal bundle of $P$ made above. For each $k$ let $D_k \looparrowright D^4 \sm \nu P$ be an immersed $\Z$-trivial surface with boundary $\gamma_k \times \{\pt\}$. That is the induced map $\pi_1(D_k) \to \pi_1(D^4 \sm \nu P) \xrightarrow{\phi} \Z$ is the trivial homomorphism.
Use $D_k$ to surger the torus $\gamma_k \times S^1$ to an immersed surface, that we call $\S_k$.
  \end{construction}

Recall that we write $\La_S :=  \Z[t,t^{-1},(t-1)^{-1}]$.

\begin{lemma}\label{lemma:H2-and-basis-D4-P}
  The nonvanishing homology groups of $D^4 \sm \nu P$ are as follows.
	\[H_i(D^4\sm \nu P;\La) \cong \begin{cases} \Z & i=0 \\ \La^{2g+ r-1} & i=2. \end{cases}\]
A basis for $H_2(D^4\sm \nu P;\La_S)$ is given by the collection of immersed surfaces $\{\S_k\}_{k=1}^{r-1+2g}$.
\end{lemma}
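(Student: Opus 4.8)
The plan is to prove this in two stages: first establish the $\La$-homology computation, then identify the basis.

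\textbf{Stage 1: the homology groups.} I would apply Lemma~\ref{lemma:general-homology} with $W = D^4 \sm \nu P$. By Remark~\ref{remark:what-we-use} we are assuming $H_1(D^4 \sm \nu P; \La) = 0$, which is exactly the hypothesis needed for the second clause of Lemma~\ref{lemma:general-homology}. The manifold $D^4 \sm \nu P$ is compact, connected, oriented, with boundary $M_L$, which is connected; and $\pi_1(M_L) \to \pi_1(D^4 \sm \nu P)$ is onto because $M_L = \partial W_P$ and, more directly, because any loop in $D^4 \sm \nu P$ can be pushed off to the boundary $S^3 \sm \nu L$ which sits inside $M_L$ (alternatively cite that $X_L \subseteq \partial(D^4 \sm \nu P)$ surjects). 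So Lemma~\ref{lemma:general-homology} gives $H_i(D^4 \sm \nu P;\La) \cong \Z$ for $i=0$, a free module $\La^{\beta_2}$ for $i=2$, and $0$ otherwise. To pin down $\beta_2$, I would note that $\beta_2(D^4 \sm \nu P) = \beta_2$ is computed from the rational homology, and by Lemma~\ref{lemma:hom-D4-minus-P} we have $H_2(D^4 \sm \nu P;\Z) \cong \Z^{2g+r-1}$, hence $\beta_2 = 2g+r-1$. (Alternatively, $\chi(D^4 \sm \nu P) = \chi(D^4) - \chi(P) = 1 - (1 - 2g - (r-1)) = 2g+r-1$ by a Mayer--Vietoris/excision count, using $\chi(\nu P) = \chi(P) = 2 - 2g - r$ for a genus $g$ surface with $r$ boundary circles.)

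\textbf{Stage 2: the basis.} Each $\S_k$ is a closed immersed surface in $D^4 \sm \nu P$, obtained by surgering the torus $\gamma_k \times S^1$ along a $\Z$-trivial surface $D_k$; since it is closed and oriented it carries a fundamental class, and pushing forward gives $[\S_k] \in H_2(D^4 \sm \nu P;\Z)$, hence also in $H_2(D^4 \sm \nu P;\La_S)$ after tensoring (here I use flatness of $\La_S$ over $\La$, so $H_2(D^4 \sm \nu P;\La_S) \cong H_2(D^4 \sm \nu P;\La) \otimes_\La \La_S \cong \La_S^{2g+r-1}$). To see these $r-1+2g$ classes form a basis of a free rank $r-1+2g$ module, it suffices to show they are linearly independent over the field of fractions $\Q(t)$, i.e.\ that the images in $H_2(D^4 \sm \nu P;\Q(t))$ are independent; equivalently, that the intersection/evaluation pairing of the $\S_k$ against a dual basis of $H_2(D^4 \sm \nu P, M_L;\Q(t))$ (or $H^2$) is nonsingular. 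The natural dual classes are the cocores of the normal bundles: the meridional disc $\mu_k \times D^2$-type classes, or more precisely classes represented by $\{\pt\} \times D^2$ linking each $\gamma_k$. I would argue that $\S_k \cdot (\text{cocore of } \gamma_\ell)$ is $\delta_{k\ell}$ up to a unit and zero off the diagonal — this is essentially the Alexander-duality pairing between $H_1(P)$ (generated by the $\gamma_k$) and $H_1(S^3 \sm P)$ (generated by the duals $e_k$), transported into $D^4 \sm \nu P$, and the key input is that $\{e_k\}$ generate $H_1(X_L;\La)$ (cited before Lemma~\ref{lem:algboundarylinkchar}). So the matrix of pairings is the identity, proving both independence and spanning.

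\textbf{Main obstacle.} The delicate point is Stage 2: rigorously identifying the homology classes $[\S_k]$ with the prescribed generators of $H_2(D^4 \sm \nu P;\La_S)$, rather than merely with \emph{some} generating set. The cleanest route is via a nonsingular pairing argument as above — pairing the $\S_k$ with the relative classes coming from normal cocores of $P$ along the curves $\gamma_k$, and showing this pairing matrix is (up to units) the identity by reducing to the classical Alexander-duality pairing $H_1(P) \times H_1(S^3 \sm P) \to \Z$. One must be careful that the surgery on the torus $\gamma_k \times S^1$ using the $\Z$-trivial $D_k$ does not change the relevant intersection numbers with the cocores — this holds because $D_k$ lies in $D^4 \sm \nu P$ and the cocore of $\gamma_\ell$ meets $\nu P$ only near $\gamma_\ell \subseteq P$, so $D_k \cdot (\text{cocore})= 0$ and only the original torus contributes. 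I expect this transversality/intersection bookkeeping, together with justifying that the pairing lands in units of $\La_S$ (which is why we localize at $t-1$: the relevant determinant is $\pm 1$ at $t=1$ by Lemma~\ref{lem:algboundarylinkchar}\ref{item:algebraic--ii}), to be the part requiring the most care.
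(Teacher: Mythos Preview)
Your Stage~1 matches the paper's argument, with one correctable slip: $\partial(D^4\sm\nu P)$ is not $M_L$ but $(S^3\sm\nu L)\cup_{L\times S^1}(P\times S^1)$. This is harmless, since the boundary is still connected and, as you yourself note, $X_L\subseteq\partial(D^4\sm\nu P)$ already gives the surjection on $\pi_1$ that Lemma~\ref{lemma:general-homology} needs.

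Stage~2 has a genuine gap. A minor point first: $\Q(t)$-linear independence of $n$ elements in a free $\La_S$-module of rank $n$ does not make them a basis (consider $2e_1,\dots,2e_n$), so you really must produce a pairing matrix invertible over $\La_S$, as you then attempt. The real problem is that your proposed dual classes are not well-defined. The curves $\gamma_k$ lie on $P\subset D^4$, not on a surface in $S^3$, so there is no Alexander-duality pairing $H_1(P)\times H_1(S^3\sm P)\to\Z$ to invoke; the generators $e_k$ cited before Lemma~\ref{lem:algboundarylinkchar} are dual to curves on a \emph{Seifert} surface $F\subset S^3$, a different surface from $P$. And the ``cocores $\{\pt\}\times D^2$'' you describe are normal discs to $P$ sitting inside $\nu P$, not in its complement; they all represent the same Thom class in $H_2(\nu P,\partial\nu P)\cong\Z$ and cannot distinguish the $\gamma_k$. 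So no candidate dual basis in $H_2(D^4\sm\nu P,\partial;\La_S)$ has been produced, and the pairing computation cannot begin.

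The paper's route avoids intersection pairings. It uses the long exact sequence of the pair $(D^4\sm\nu P,\,P\times S^1)$: since $H_i(P\times S^1;\La)\cong H_i(P;\Z)$ is annihilated by $t-1$, it vanishes over $\La_S$, giving $H_2(D^4\sm\nu P;\La_S)\xrightarrow{\cong}H_2(D^4\sm\nu P,\,P\times S^1;\La_S)$. One checks, via Poincar\'e--Lefschetz duality with the pair $(D^4\sm\nu P,\,S^3\sm\nu L)$ and the UCSS, that the relative $H_2$ over $\La$ is also free of rank $r-1+2g$, so that over $\La$ the sequence reads $0\to\La^{r-1+2g}\to\La^{r-1+2g}\to\Z^{r-1+2g}\to 0$. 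Lifts $[D_k]$ of the generators $[\gamma_k\times\{\pt\}]$ of the right-hand $\Z^{r-1+2g}$ give a basis of the relative group; the key identity $[\S_k]=(t-1)[D_k]$ in that group then shows the $[\S_k]$ form a $\La_S$-basis there, and the isomorphism above transports this back to $H_2(D^4\sm\nu P;\La_S)$. This --- killing $H_*(P\times S^1;\La)$ and inverting the factor $t-1$ relating $\S_k$ to $D_k$ --- is the actual reason for the localisation, not the determinant condition of Lemma~\ref{lem:algboundarylinkchar}.
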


\begin{proof}
  By Lemma~\ref{lemma:hom-D4-minus-P}, $\beta_2(D^4 \sm \nu P) = r-1+2g$.  The computation of the homology groups with $\La$ coefficients then follows from Lemma~\ref{lemma:general-homology}, noting that $W=D^4 \sm \nu P$ indeed satisfies the hypotheses of that lemma.

We show that the $\{\S_k\}$ are a basis over $\La_S$.
To do this we consider the exact sequence
\begin{equation}\label{eqn:PxS1}
H_2(P \times S^1;\La) \to H_2(D^4 \sm \nu P;\La) \to H_2(D^4 \sm \nu P,P \times S^1;\La) \to H_1(P \times S^1;\La) \to 0.
\end{equation}
We know that \[H_i(P \times S^1;\La) \cong H_i(P \times \R;\Z) \cong H_i(P;\Z)\]
for $i=1,2$. For $i=1$ we therefore have $H_i(P \times S^1;\La)\cong \Z^{r-1+2g}$ generated by the $\{\gamma_k\times \{\pt\}\}_{k=1}^{r-1+2g}$, while for $i=2$ we have $H_2(P\times S^1;\La) \cong H_2(P;\Z) =0$.
Therefore $H_i(P \times S^1;\La_S) =0$ for $i=1,2$, so over $\La_S$
\begin{equation}\label{eqn:La_s-iso}
  H_2(D^4 \sm \nu P;\La_S) \xrightarrow{\cong} H_2(D^4 \sm \nu P,P \times S^1;\La_S)
\end{equation}
is an isomorphism.

Since $H_2(D^4 \sm \nu P;\La)\cong \La^{r-1+2g}$ it follows that $H_2(D^4 \sm \nu P,P \times S^1;\La_S) \cong \La_S^{r-1+2g}$.  We claim the following.

\begin{claim*}
 We have that $H_2(D^4 \sm \nu P,P \times S^1;\La) \cong \La^{r-1+2g}$ is a free module of the same rank as $H_2(D^4 \sm \nu P;\La)$.
\end{claim*}

  To see this note that $H_2(D^4 \sm \nu P,P \times S^1;\La) \cong H^2(D^4 \sm \nu P,S^3 \sm \nu L;\La)$.
Now \[H_0(S^3;\sm \nu L;\La) \cong \Z \xrightarrow{\cong} H_0(D^4 \sm \nu L;\La) \cong \Z\] is an isomorphism. Combined with $H_1(D^4\sm \nu P;\La)=0$ and the long exact sequence of the pair we deduce that $H_i(D^4 \sm \nu P,S^3 \sm \nu L;\La)=0$ for $i=0,1$.  Then similarly to the proof of Lemma~\ref{lemma:general-homology}, the UCSS and ~\cite[Lemma~2.1]{Borodzik-Friedl-2013-alg-unknotting} imply that $H_2(D^4 \sm \nu P,P \times S^1;\La)$ is a free module.  The rank must be $r-1+2g$ since we know that this is the rank over $\La_S$, so indeed $H_2(D^4 \sm \nu P,P \times S^1;\La) \cong \La^{r-1+2g}$ as claimed.

We have now seen that the exact sequence \eqref{eqn:PxS1} is equivalent to
\[0 \to \La^{r-1+2g} \to \La^{r-1+2g} \to \Z^{r-1+2g} \to 0.\]
Here we consider $\Z$ as a $\La$-module where $t$ acts as the identity.
Representing generators of $\Z^{r-1+2g}$ by curves $\gamma_k \times \{\pt\}$ in $P \times S^1$, we can lift them to a basis of $\La^{2g+1-r}$, by extending them to elements of $H_2(D^4 \sm \nu P, P \times S^1;\La)$. That is, choose a $\Z$-trivial surface $D_k \looparrowright D^4 \sm \nu P$ with boundary $\gamma_k \times \{\pt\}$, for each $k=1,\dots,2g+1-r$, as in Construction~\ref{construction:spheres-D4-sm-P}.

The surfaces $\S_k$ from Construction~\ref{construction:spheres-D4-sm-P} satisfy $[\S_k] = (t-1)\cdot [D_k] \in H_2(D^4 \sm \nu P,P \times S^1;\La)$. Therefore the $\{\S_k\}$ also represent a basis for $H_2(D^4 \sm \nu P,P \times S^1;\La)$ over $\La_S$.  Since the $\{\S_k\}$ are closed surfaces they lift to $H_2(D^4 \sm \nu P;\La_S)$. Then by \eqref{eqn:La_s-iso} it follows that the $\{\S_k\}$ also represent a basis for $H_2(D^4 \sm \nu P;\La_S) \cong \La_S^{r-1+2g}$.
\end{proof}

Recall that $W_P := D^4 \sm \nu P \cup_{P \times S^1} (G \times S^1)$.

\begin{lemma}
  The inclusion induced map $H_i(D^4\sm \nu P;\La_S) \to H_i(W_P;\La_S)$ is an isomorphism for every $i$.
  The nonvanishing homology groups of $W_P$ are as follows.
	\[H_i(W_P;\La) \cong \begin{cases} \Z & i=0 \\ \La^{2g+ r-1} & i=2. \end{cases}\]
Over $\La_S$, a basis for $H_2(W_P;\La_S)$ is given by the collection of immersed surfaces $\{\S_k\}_{k=1}^{r-1+2g}$.
\end{lemma}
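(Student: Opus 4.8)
The plan is to compute $H_*(W_P;\La)$ and $H_*(W_P;\La_S)$ by applying Mayer--Vietoris to the decomposition $W_P = (D^4\sm\nu P)\cup_{P\times S^1}(G\times S^1)$ and then invoking Lemma~\ref{lemma:general-homology}. First I would pin down the restriction of the coefficient system $\phi$ to the two pieces. On $\pi_1(G\times S^1)=\pi_1(G)\times\pi_1(S^1)$: the free generators of $\pi_1(G)$ are represented by curves among the $\gamma_k$, which lie in the commutator subgroup of $\pi_1(D^4\sm\nu P)$ by the framing used to define the gluing, hence have trivial image under $\phi$; the $\pi_1(S^1)$-factor is a meridian of $P$, which generates $H_1(D^4\sm\nu P;\Z)\cong\Z$ and so maps to a generator. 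Thus $\phi$ restricted to $G\times S^1$ is projection onto the $S^1$-factor, the corresponding infinite cyclic cover is $G\times\R$, and $H_*(G\times S^1;\La)\cong H_*(G;\Z)$ with $t$ acting as the identity; in particular $H_*(G\times S^1;\La_S)=0$. The corresponding vanishing $H_*(P\times S^1;\La_S)=0$ was already observed.

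Over $\La_S$ the conclusion is then immediate: since $H_*(P\times S^1;\La_S)=0$, the Mayer--Vietoris sequence gives $H_i(D^4\sm\nu P;\La_S)\oplus H_i(G\times S^1;\La_S)\xrightarrow{\cong}H_i(W_P;\La_S)$, and as the second summand also vanishes, the inclusion $D^4\sm\nu P\hookrightarrow W_P$ induces an isomorphism $H_i(D^4\sm\nu P;\La_S)\xrightarrow{\cong}H_i(W_P;\La_S)$ for every $i$. Combined with Lemma~\ref{lemma:H2-and-basis-D4-P}, which identifies the $\{\S_k\}$ with a basis of $H_2(D^4\sm\nu P;\La_S)$, this shows that $\{\S_k\}_{k=1}^{r-1+2g}$ is a basis of $H_2(W_P;\La_S)$.

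Over $\La$ I would first establish $H_1(W_P;\La)=0$. In the Mayer--Vietoris sequence with $\La$ coefficients, $H_1(D^4\sm\nu P;\La)=0$ by hypothesis, and $H_*(G\times S^1;\La)=H_*(G;\Z)$, $H_*(P\times S^1;\La)=H_*(P;\Z)$ with trivial $t$-action, so the relevant portion is $H_1(P;\Z)\to H_1(G;\Z)\to H_1(W_P;\La)\to H_0(P;\Z)\to H_0(D^4\sm\nu P;\La)\oplus H_0(G;\Z)$. The last map is injective, and the first map is the surjection $H_1(P;\Z)\to H_1(G;\Z)$ obtained by composing the surjection $H_1(P;\Z)\to H_1(F;\Z)$ (capping off the $r$ boundary circles of $P$) with the surjection $H_1(F;\Z)\to H_1(G;\Z)$ (including a closed surface into a handlebody it bounds); hence $H_1(W_P;\La)=0$. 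Now $W_P$ is a compact connected oriented $4$-manifold with nonempty connected boundary $M_L$, and $\phi$ restricted to $\pi_1(M_L)$ is surjective (meridians map to $1$), so $H_0(M_L;\La)\cong\Z$ — which is the only consequence that the proof of Lemma~\ref{lemma:general-homology} draws from the hypothesis that $\pi_1(\partial W)\to\pi_1(W)$ is onto. Thus Lemma~\ref{lemma:general-homology} applies and yields $H_0(W_P;\La)\cong\Z$, $H_2(W_P;\La)\cong\La^{\beta_2(W_P)}$, and $H_i(W_P;\La)=0$ otherwise. Finally $\beta_2(W_P)=\chi(W_P)=\chi(D^4\sm\nu P)+\chi(G\times S^1)-\chi(P\times S^1)=\chi(D^4\sm\nu P)=2g+r-1$ by Lemma~\ref{lemma:hom-D4-minus-P}, since $\chi(G\times S^1)=\chi(P\times S^1)=0$. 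Together with the previous paragraph this proves the lemma.

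The main obstacle is pinning down $H_2(W_P;\La)$ on the nose: Mayer--Vietoris with $\La$ coefficients only produces a short exact sequence $0\to\La^{2g+r-1}\to H_2(W_P;\La)\to\Z^{g+r-1}\to 0$, which does not by itself force $H_2(W_P;\La)$ to be a free module of rank $2g+r-1$. The freeness comes only from Poincaré--Lefschetz duality together with the universal coefficient spectral sequence, as packaged in Lemma~\ref{lemma:general-homology}, and the one genuinely nontrivial point to check before invoking that lemma is $H_1(W_P;\La)=0$, for which the surjectivity of $H_1(P;\Z)\to H_1(G;\Z)$ is the essential input.
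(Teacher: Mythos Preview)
Your proof is correct and follows essentially the same approach as the paper: Mayer--Vietoris over $\La_S$ (using that $H_*(Q\times S^1;\La_S)=0$ for $Q\in\{P,G\}$) for the inclusion-induced isomorphism and the basis statement, together with Lemma~\ref{lemma:general-homology} for the $\La$-homology. The only notable differences are cosmetic: you verify $H_1(W_P;\La)=0$ explicitly via Mayer--Vietoris and the surjection $H_1(P;\Z)\to H_1(G;\Z)$, whereas the paper treats this as implicit from the setup (cf.\ Remark~\ref{remark:what-we-use}); and you obtain the rank of $H_2(W_P;\La)$ from Euler-characteristic additivity, whereas the paper simply reads it off from the $\La_S$-isomorphism $H_2(W_P;\La_S)\cong H_2(D^4\sm\nu P;\La_S)\cong\La_S^{r-1+2g}$.
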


\begin{proof}
Lemma~\ref{lemma:general-homology} tells us the homology of $W_P$ with $\La$ coefficients, except for the rank of $H_2(W_P;\La)$.
For $Q \in \{P,G\}$ we have $H_i(Q \times S^1;\La) \cong H_i(Q \times \R;\Z) \cong H_i(Q;\Z)$ for every $i$. Therefore $H_i(Q\times S^1;\La)$ is annihilated by $t-1$ and so
\[H_i(Q \times S^1;\La_S) \cong H_i(Q \times S^1;\La) \otimes_{\La} \La_S = 0.\]
It then follows from the Mayer-Vietoris sequence for homology with $\La_S$ coefficients, using the decomposition  $W_P := D^4 \sm \nu P \cup_{P \times S^1} (G \times S^1)$, that $H_i(D^4\sm \nu P;\La_S) \xrightarrow{\cong} H_i(W_P;\La_S)$ is an isomorphism for all~$i$. In particular this implies that $H_2(W_P;\La_S) \cong H_2(D^4 \sm \nu P;\La_S) \cong \La_S^{r-1+2g}$, so indeed $H_2(W_P;\La) \cong \La^{r-1+2g}$ as claimed.
The fact that the inclusion induced map is an isomorphism over $\La_S$ implies that the same immersed surfaces $\{\S_k\}_{k=1}^{r-1+2g}$ for $H_2(D^4\sm \nu P;\La)$ in Construction~\ref{construction:spheres-D4-sm-P} and Lemma~\ref{lemma:H2-and-basis-D4-P} also represent a basis for $H_2(W_P;\La_S) = H_2(W_P;\La) \otimes_{\La} \La_S$.
\end{proof}

\begin{construction}\label{construction:surfaces-using-SSs}
We use a $\Z$-trivial surface system (Definition~\ref{defn:ZTS}) to modify the $\S_k$ for $k=1,\dots,r-1$.
  We may suppose that there is a collar $S^3 \times I \subseteq D^4$ and that $P \cap (S^3 \times I)  = L \times I$.  For $j=1,\dots,r-1$, we consider $\gamma_j := L \times \{j/r\} \subseteq L \times I \subseteq P$. Push the Seifert surface $F_j$ for the $j$th component of $L$ to the level $S^3 \times \{j/r\}$.  Now use $F_j$ in place of the immersed surface $D_j$ in Construction~\ref{construction:spheres-D4-sm-P} to surger $\gamma_j \times S^1$ to another embedded surface $\S_{F_j}$. Since $F_j$ is part of a $\Z$-trivial surface system, the embedded surface $\S_{F_j}$ has the property that every curve on it represents the trivial element of $\pi_1(W_P) \cong \Z$.  Therefore $\S_{F_j}$ represents a homology class in $H_2(W_P;\La)$.  Note that this is a special case of the surfaces from Construction~\ref{construction:spheres-D4-sm-P}.

  By using these surfaces that originate from surfaces in $S^3$, we obtain some crucial control on intersections. For $j \neq k$, we have $\S_{F_j} \cap \S_{F_k} = \emptyset$.  Moreover, in the construction of the rest of the surfaces $\S_k$, for $k=r,\dots,r-1+2g$, as in Construction~\ref{construction:spheres-D4-sm-P}, we may assume without loss of generality that the surfaces $D_k$ are disjoint from the collar $S^3 \times I$. It follows that $\S_{F_j} \cap \S_k = \emptyset$ for every $j \in \{1,\dots,r-1\}$ and for every $k \in \{r,\dots,r-1+2g\}$.
\end{construction}

\begin{lemma}
  $\lambda(\S_{F_i},\S_{F_i})=0$.
\end{lemma}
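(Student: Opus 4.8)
The plan is to exploit that, by Construction~\ref{construction:surfaces-using-SSs}, the surface $\S_{F_i}$ (for $i\in\{1,\dots,r-1\}$) is built entirely inside the collar $S^3\times I\subseteq D^4\subseteq W_P$: it is obtained by surgering the torus $\gamma_i\times S^1$ along two parallel copies of the Seifert surface $F_i$, all of which have been pushed into the single level $S^3\times\{i/r\}$. So the first step is to check that $\S_{F_i}$ may be taken to lie in a thin slab $X_L\times[i/r-\eps,i/r+\eps]$, using that $X_L\times I=(S^3\sm\nu L)\times I=(S^3\times I)\sm\nu P$ sits inside $D^4\sm\nu P\subseteq W_P$. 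This comes down to the observations that $\gamma_i\times S^1\subseteq\partial\nu P$ is isotopic in $\partial\nu P$ to the torus $\partial\nu_{S^3}(L_i)\times\{i/r\}$, that $F_i$ (whose interior is embedded in $S^3\sm L$) can be isotoped into $X_L\times\{i/r\}$ with boundary $\gamma_i\times\{\pt\}$, and that the two surgery pushoffs of $F_i$ can be kept supported in the slab.

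With this in hand, the second step is purely geometric: translating the collar coordinate by $2\eps$ carries $\S_{F_i}\subseteq X_L\times[i/r-\eps,i/r+\eps]$ to a parallel copy $\S_{F_i}'\subseteq X_L\times[i/r+\eps,i/r+3\eps]$, where we take $\eps$ small enough that $i/r+3\eps<1$. This translation is a (compactly supported) ambient isotopy of $W_P$, so $[\S_{F_i}']=[\S_{F_i}]$ in $H_2(W_P;\La)$, and after a further shrinking we may arrange that $\S_{F_i}$ and $\S_{F_i}'$ are disjoint.

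The last step feeds this into the equivariant intersection pairing. By homological invariance of $\lambda$ we get $\lambda(\S_{F_i},\S_{F_i})=\lambda(\S_{F_i},\S_{F_i}')$, and the right-hand side is a signed count of the intersections of a lift of $\S_{F_i}$ with the deck translates of a lift of $\S_{F_i}'$ in the infinite cyclic cover $\wt W_P$. Since $\S_{F_i}$ and $\S_{F_i}'$ are disjoint in $W_P$, all of their lifts to $\wt W_P$ are disjoint, so every one of these intersection numbers vanishes and hence $\lambda(\S_{F_i},\S_{F_i})=0$. I expect the only point requiring genuine care to be the first step, namely confirming that Construction~\ref{construction:surfaces-using-SSs} really keeps $\S_{F_i}$ inside the product collar $S^3\times I$ and off $\nu P$, so that there is room to slide it off itself in the collar direction; the remaining steps are routine, and in particular no computation with $\lambda$ is needed beyond the fact that disjoint surfaces pair trivially.
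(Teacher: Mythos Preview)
Your argument is correct and follows essentially the same strategy as the paper's: produce a parallel copy of $\S_{F_i}$ that is disjoint from the original and represents the same class in $H_2(W_P;\La)$, so that the equivariant self-intersection vanishes. The only difference is the direction of the pushoff. You place $\S_{F_i}$ inside a thin slab $X_L\times[i/r-\eps,i/r+\eps]$ of the collar and translate in the $I$-direction; the paper instead stays at the single level $S^3\times\{i/r\}$ and pushes the torus $\gamma_i\times S^1$ and the two capping copies of $F_i$ off in the normal direction to $F_i$ in $S^3$, invoking that a Seifert surface induces the zero framing so that further parallel copies of $F_i$ remain disjoint. Your collar translation is arguably the more robust formulation since it does not rely on the zero-framing observation, while the paper's version has the advantage of taking place entirely in one $3$-dimensional slice. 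Either way, the conclusion is immediate once the disjoint parallel is in hand.
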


\begin{proof}
  The torus in the construction of $\S_{F_i}$ can be pushed off itself. Since the $F_i$ induce the zero framing, this can be extended to two disjoint parallel copies of $F_i$ in $S^3 \times \{i/r\}$.
\end{proof}

Now we use these surfaces to compute the intersection form.

\begin{lemma}\label{lemma:int-form-form}
  The intersection form $\lambda \colon H_2(W_P;\La) \times H_2(W_P;\La) \to \La$ can be represented by a matrix of the form
  \[\begin{pmatrix}
    0_{(r-1) \times (r-1)} & 0_{(r-1) \times 2g} \\ 0_{2g \times (r-1)} & A_{2g \times 2g}
  \end{pmatrix}\]
for some matrix $A$ over $\La$ such that $A(1)$ has signature $0$ and $\det (A(1)) \neq 0$.
\end{lemma}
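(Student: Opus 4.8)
The plan is to compute the intersection form $\lambda$ on $H_2(W_P;\La)$ using the geometric basis of immersed surfaces $\{\S_k\}_{k=1}^{r-1+2g}$ built in Constructions~\ref{construction:spheres-D4-sm-P} and~\ref{construction:surfaces-using-SSs}, remembering that these only form a basis after inverting $S$, so I must first argue that computing $\lambda$ on the $\La_S$-basis determines it up to the required form over $\La$. First I would note that $\lambda$ on $H_2(W_P;\La) \cong \La^{r-1+2g}$ is a Hermitian form over $\La$ whose localisation $\lambda \otimes \La_S$ is represented, in the basis $\{\S_k\}$, by the matrix of intersection numbers $\lambda(\S_j,\S_k) \in \La_S$; since $\La_S \to \Q(t)$ is injective and the entries will turn out to lie in $\La$, this pins down $\lambda$ over $\La$ up to the change-of-basis matrix relating $\{\S_k\}$ to an honest $\La$-basis, which is block-triangular and the identity modulo $(t-1)$ (recall $[\S_k] = (t-1)[D_k]$ in the relative group, but the $\S_k$ themselves, being closed, lift and the transition is unimodular over $\La_S$). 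I should be slightly careful here and phrase it as: the matrix of $\lambda$ in \emph{any} $\La$-basis has the same block shape and the same signature at $t=1$ as the $\La_S$-matrix in the $\{\S_k\}$-basis, because changes of basis over $\La_S$ preserve the block structure inherited from the disjointness of the first $r-1$ surfaces from everything, and $\lambda(1)$ is a topological invariant.

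Next I would establish the vanishing of the top-left and off-diagonal blocks. The surfaces $\S_{F_1},\dots,\S_{F_{r-1}}$ are built from a $\Z$-trivial surface system pushed into disjoint collar levels $S^3 \times \{j/r\}$, and Construction~\ref{construction:surfaces-using-SSs} already records that $\S_{F_j} \cap \S_{F_k} = \emptyset$ for $j \neq k$ and $\S_{F_j} \cap \S_k = \emptyset$ for $k \geq r$. Hence every intersection number $\lambda(\S_{F_j}, \S_{F_k})$ for $j,k \leq r-1$ and $\lambda(\S_{F_j},\S_k)$ for $j \leq r-1 < k$ is literally zero at the chain level (no intersection points at all, over any cover), giving the claimed $0$ blocks. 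Together with the preceding lemma $\lambda(\S_{F_i},\S_{F_i}) = 0$ this handles the diagonal of the top-left block too, but the stated block form is even stronger and follows directly from disjointness. So $\lambda$ is represented by $\begin{pmatrix} 0 & 0 \\ 0 & A\end{pmatrix}$ with $A = (\lambda(\S_j,\S_k))_{r \leq j,k \leq r-1+2g}$ a $2g \times 2g$ Hermitian matrix over $\La$.

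Then I would identify $\lambda(1)$, i.e.\ the ordinary integral intersection form of $W_P$ (the image of $\lambda$ under $t \mapsto 1$, equivalently $\lambda \otimes_\La \Z$), and show it has signature $0$ and that $A$ has nonzero determinant. For the signature: $W_P$ is a closed-up version of $D^4 \sm \nu P$, and I would compute $H_2(W_P;\R)$ via Mayer--Vietoris and check that the ordinary intersection form of $W_P$ is metabolic — concretely, that $W_P$ bounds or that half of $H_2(W_P;\Q)$ is represented by surfaces of square zero whose span is its own annihilator; the $\S_{F_j}$ give $r-1$ of these, and on the symplectic $2g$ curves the gluing of $G \times S^1$ is designed so that the relevant half-dimensional subspace is Lagrangian for the ordinary form. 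Since the signature of $\lambda(1)$ equals the signature of $\begin{pmatrix}0&0\\0&A(1)\end{pmatrix}$, which equals $\operatorname{sign} A(1)$, we get $\operatorname{sign} A(1) = 0$. For $\det A \neq 0$ (equivalently $A$ nondegenerate over $\Q(t)$), I would use that $H_2(W_P;\La)$ is $\La$-torsion-free of rank $\beta_2 = r-1+2g$ and that over $\La_S$ the form $\lambda$ presents the Blanchfield pairing of $M_L = \partial W_P$ via the localisation exact sequence $H_2(W_P;\La_S) \to H_2(W_P,M_L;\La_S) \to H_1(M_L;\La_S) \to 0$ (using $H_1(W_P;\La_S) = 0$); nondegeneracy of the Blanchfield form on the torsion part, which corresponds exactly to the $A$ block by Lemma~\ref{lemma:algebraic}, forces $\det A \neq 0$. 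The main obstacle I anticipate is the bookkeeping in this last paragraph: pinning down that the $A$-block of $\lambda$ over $\La$ (not just $\La_S$) really presents $\Bl_{M_L}$ on $TH_1(M_L;\La)$ and simultaneously has $\det A(1) \neq 0$ requires carefully threading the localisation sequence, the identification $H_2(W_P;\La_S) \cong H_2(D^4\sm\nu P;\La_S)$, and Poincar\'e--Lefschetz duality for $(W_P, M_L)$, while keeping the geometric basis $\{\S_k\}$ aligned with the algebraic splitting $\La^{r-1} \oplus T$ — everything else (the block zeros, the disjointness, the self-intersection vanishing) is essentially already in hand from the constructions above.
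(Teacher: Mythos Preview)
Your proposal has the right geometric picture for the block zeros --- the disjointness recorded in Construction~\ref{construction:surfaces-using-SSs} does force $\lambda(\S_{F_i},\S_k)=0$ for all $k$ --- but there are two genuine gaps.

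\textbf{Passing from $\La_S$ to $\La$.} Your assertion that ``changes of basis over $\La_S$ preserve the block structure inherited from the disjointness'' is false in general: congruence over $\La_S$ does not preserve having a block of zeros over $\La$. What the paper does instead is pick a $\La$-basis $\{e_i\}$ of $H_2(W_P;\La)$ with $\S_k=(t-1)^{y_k}e_k$ for suitable integers $y_k$ (using that the $\{\S_k\}$ form a $\La_S$-basis), and then observe that for $i\le r-1$,
\[
0=\lambda(\S_{F_i},\S_j)=(-t)^{-y_j}(t-1)^{y_i+y_j}\lambda(e_i,e_j),
\]
so $\lambda(e_i,e_j)=0$ because $\La$ is an integral domain. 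This divisibility/integral-domain trick is the step you are missing; it replaces the need to know anything about how block structure behaves under congruence.

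\textbf{$\det A(1)\neq 0$ versus $\det A\neq 0$.} The lemma asks for $\det(A(1))\neq 0$, which is strictly stronger than $\det A\neq 0$ in $\La$. Your Blanchfield nondegeneracy argument only yields the latter (nondegeneracy over $\Q(t)$). The paper instead runs the long exact sequence of $(W_P,M_L)$ with $\Q$-coefficients,
\[
\Q^{r-1+2g}\xrightarrow{\lambda_\Q}\Q^{r-1+2g}\to H_1(M_L;\Q)\cong\Q^{r}\to H_1(W_P;\Q)\cong\Q\to 0,
\]
so $\coker\lambda_\Q\cong\Q^{r-1}$; since $\lambda_\Q$ is represented by $\bigl(\begin{smallmatrix}0&0\\0&A(1)\end{smallmatrix}\bigr)$ (the $\{\S_k\}$ descend to a $\Q$-basis), this forces $A(1)$ to be nonsingular. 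For the signature, your metabolic sketch is not a proof; the paper simply invokes \cite[Lemma~5.4]{Nagel-Powell}, which shows the ordinary intersection form of $W_P$ has signature zero for any $\Z$-surface when the pairwise linking numbers vanish.
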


\begin{proof}
 Let \[\mathcal{S}:= \{\S_{F_i}\}_{i=1}^{r-1} \cup \{\S_j\}_{j=1}^{2g}.\]  Let $\{e_i\}_{i=1}^{r-1+2g}$ be a basis for the homology $H_2(W_P;\La) \cong \La^{r-1+2g}$, and suppose that, for integers $y_i$, we have that $(t-1)^{y_i} e_i = \S_{F_i}$ for $i=1,\dots,r-1$, and that $(t-1)^{y_i} e_i = \S_{i}$ for $i=r,\dots,r-1+2g$.  We may make this supposition since we know that $\mathcal{S}$ represents a basis for $H_2(W_P;\La_S) \cong H_2(W_P;\La) \otimes_{\La} \La_S$.

 Since for every $i \in \{1,\dots,r-1\}$, we have that $\S_{F_i}$ is disjoint from all the other surfaces in $\mathcal{S}$, it follows that for $i=1,\dots,r-1$ and for every $j\in \{1,\dots,r-1+2g\}$ we have
 \begin{align*}
   0 & = \lambda((t-1)^{y_i} e_i, (t-1)^{y_j}e_j) = (t-1)^{y_i}\lambda(e_i,e_j)(t^{-1}-1)^{y_j} \\  &= (t-1)^{y_i}\lambda(e_i,e_j)(-t)^{-y_j}(t-1)^{y_j} = (-t)^{-y_j}(t-1)^{y_i+y_j}\lambda(e_i,e_j).
 \end{align*}
  Since $\La$ is an integral domain, it follows that $\lambda(e_i,e_j)=0$.  The matrix representing $\lambda$ is therefore of the form claimed.

To see that the matrix $A(1)$ has nonzero determinant, we consider the long exact sequence
\[H_2(W_P;\Q) \to H_2(W_P,M_L;\Q) \to H_1(M_L;\Q) \to H_1(W_P;\Q)\]
which reduces to
\[\Q^{r-1+2g} \xrightarrow{\lambda_\Q} \Q^{r-1+2g} \to \Q^{r} \to \Q \to 0.\]
The map $\lambda_\Q$ can be represented by a matrix for the ordinary $\Q$-valued intersection form of $W_P$, which can in turn be represented by
\[\begin{pmatrix}
    0 & 0 \\
    0 & A(1)
  \end{pmatrix},\]
  because the basis $\mathcal{S}$ descends to a basis for $H_2(W_P;\Q) \cong H_2(W_P;\La) \otimes_{\La} \Q$, where this isomorphism follows from the UCSS for homology.
Since $\ker (\Q^r \to \Q) \cong \Q^{r-1}$, it follows by exactness that $A(1)$ is nonsingular over $\Q$ and so $\det(A(1)) \neq 0$.

Finally, it was shown in \cite[Proof~of~Lemma~5.4]{Nagel-Powell} that the signature of the intersection form of $W_P$ is zero for links whose pairwise linking numbers are all zero.  The proof there was for a pushed in Seifert surface, but the part that computes the ordinary signature works for any $\Z$-surface.
\end{proof}

Lemma~\ref{lemma:end-pf} below completes the proof of Theorem~\ref{thm:weakly-Z-slice-implies-Alexander-module-condition-bdy-links} \eqref{item:main-1} $\Rightarrow$ \eqref{item:main-2}, which was the last remaining implication we needed to prove.  It uses the following result.

\begin{theorem}\label{lem:general-fact-conway}
  Let $L$ be a boundary link.  The intersection form of a compact, connected, oriented $4$-manifold $W$ with $\partial W \cong M_L$, and the inclusion induced map $\phi \colon \pi_1(\partial W)\to \pi_1(W) \cong \Z$ onto, presents the Blanchfield form on the torsion part $TH_1(M_L;\La_S)$, where the $\La$ coefficients are determined by $\phi$.
\end{theorem}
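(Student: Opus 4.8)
The plan is to work over the localised ring $\La_S=\Z[t,t^{-1},(t-1)^{-1}]$, whose field of fractions is $\Q(t)$, and to run the standard comparison between the equivariant intersection form of a $4$-manifold and the Blanchfield form of its boundary, in the spirit of~\cite{Borodzik-Friedl-Powell} and~\cite{Nagel-Powell}.

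First I would record the relevant homology over $\La_S$. Since $\partial W\cong M_L$ is connected and $\pi_1(\partial W)\to\pi_1(W)\cong\Z$ is onto, Lemma~\ref{lemma:general-homology} applies and yields $H_1(W;\La)=H_3(W;\La)=0$ with $H_2(W;\La)\cong\La^{\beta_2(W)}$ free. Tensoring with the flat extension $\La_S$, and using that $H_0(W;\La)\cong\La/(t-1)$ is annihilated by $\La_S$, gives $H_0(W;\La_S)=H_1(W;\La_S)=H_3(W;\La_S)=0$ and $H_2(W;\La_S)\cong\La_S^{\beta_2(W)}$ free. On the boundary side $H_0(M_L;\La_S)=0$, while Lemma~\ref{lem:algboundarylinkchar} together with~\cite[Proposition~1.2]{Levine-77-knot-modules} (so that $t-1$ acts invertibly on the torsion) gives $H_1(M_L;\La_S)\cong\La_S^{r-1}\oplus T$ with $T:=TH_1(M_L;\La)=TH_1(M_L;\La_S)$.

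Next I would identify $H_1(M_L;\La_S)$ as the cokernel of the adjoint of the intersection form. Poincar\'e--Lefschetz duality gives $H_2(W,M_L;\La_S)\cong H^2(W;\La_S)$, and feeding the vanishing of $H_0(W;\La_S)$ and $H_1(W;\La_S)$ into the universal coefficient spectral sequence (with~\cite[Lemma~2.1]{Borodzik-Friedl-2013-alg-unknotting}) kills the $\overline{\Ext}$ terms, so $H^2(W;\La_S)\cong\overline{\Hom}_{\La_S}(H_2(W;\La_S),\La_S)$. Splicing this with the exact sequence
\[H_2(W;\La_S)\xrightarrow{j_*}H_2(W,M_L;\La_S)\xrightarrow{\partial}H_1(M_L;\La_S)\to H_1(W;\La_S)=0\]
of the pair exhibits the intersection-form adjoint $\widehat{\lambda}_W\colon H_2(W;\La_S)\to\overline{\Hom}_{\La_S}(H_2(W;\La_S),\La_S)$ as a map of free $\La_S$-modules with cokernel $H_1(M_L;\La_S)$. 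Hence a Hermitian matrix $B$ representing $\lambda_W$ in a basis satisfies $\La_S^{\beta_2}/B\La_S^{\beta_2}\cong\La_S^{r-1}\oplus T$; since the free rank is $r-1$, the $\Q(t)$-corank of $B$ is $r-1$, and restricting to the torsion submodule $T$ produces a well-defined linking form $\lambda_B$, independent of the chosen basis. When $B$ is put into the block form $\left(\begin{smallmatrix}0&0\\0&A\end{smallmatrix}\right)$ of Lemma~\ref{lemma:int-form-form}, this linking form is literally the pairing $\ell_A([v],[w])=v^{\transpose}A(t^{-1})^{-1}\overline{w}\bmod\La_S$ presented by the nonsingular block~$A$; this is what it means for the intersection form of $W$ to present a pairing on $T$.

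The crux is to identify $\lambda_B$ on $T$ with $\Bl_L$. For this I would assemble the commutative ladder relating the homology long exact sequence of $(W,M_L)$ over $\La_S$, its Poincar\'e--Lefschetz dual cohomology sequence of the pair, the Kronecker evaluation maps, and the Bockstein sequences for $0\to\La_S\to\Q(t)\to\Q(t)/\La_S\to0$. Naturality of all of these with respect to the inclusion $M_L=\partial W\hookrightarrow W$ forces the connecting map $\partial$ to intertwine the data on $W$ that defines $\lambda_B$ with the data on $M_L$ that defines $\Bl_L\otimes_\La\La_S$ --- namely Poincar\'e duality, the projection $TH^2\to\overline{\Ext}^1(TH_1,\La_S)$, and the Bockstein. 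I expect this bookkeeping to be the main obstacle: tracking signs and the $t\mapsto t^{-1}$ conjugations, and in particular checking that the degeneracy introduced by the free summand $\La_S^{r-1}$ does not interfere, so that the two pairings restrict compatibly to $T$. To finish, since $TH_1(M_L;\La_S)=TH_1(M_L;\La)$ and $\Bl_L$ is isometric to $\Bl_L\otimes_\La\La_S$ by~\cite[Proposition~1.2]{Levine-77-knot-modules} and~\cite[Proposition~A.2]{FellerLewark_19}, we conclude that the intersection form of $W$ presents $\Bl_L$ on $TH_1(M_L;\La_S)$, as claimed. (Alternatively one could glue $W$ along $M_L$ to a model filling built from a connected Seifert surface pushed into $D^4$, for which Conway's formula~\cite[Theorem~1.1]{Conway} computes the relevant pairing; but the direct argument avoids having to control $\pi_1$ of the resulting closed $4$-manifold.)
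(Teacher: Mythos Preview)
Your approach and the paper's diverge sharply. The paper does not redo any homological algebra: it simply observes that Conway's proof in~\cite{Conway} of the analogous statement for the exterior of a pushed-in totally connected $C$-complex uses only that $H_1(W;\La)=0$ and that $\pi_1(\partial W)\to\pi_1(W)\cong\Z$ is onto, combines this with $H_1(X_L;\La_S)\cong H_1(M_L;\La_S)$ from Lemma~\ref{lem:XL-vs-ML}, and concludes that Conway's argument carries over verbatim to any such~$W$. Your direct route---identifying $H_1(M_L;\La_S)$ as the cokernel of the intersection-form adjoint via Poincar\'e--Lefschetz duality and the UCSS, then chasing the Bockstein ladder---is essentially what Conway's proof does under the hood, so you are reproducing the work the paper outsources. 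The paper's approach is quicker; yours is more self-contained.

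Two cautions on your write-up. First, your appeal to Lemma~\ref{lemma:int-form-form} to put $B$ into block form $\left(\begin{smallmatrix}0&0\\0&A\end{smallmatrix}\right)$ is only available for the specific $W_P$ constructed from a $\Z$TS surface system; for a general $W$ in the hypothesis no such block decomposition is given, so ``presents the Blanchfield form'' must be formulated intrinsically as the linking form induced on the torsion submodule of $\coker\widehat\lambda_W$, without reference to a block splitting. Second, the commutative-ladder step you flag as ``the main obstacle'' is in fact the entire content of the theorem; as written you have identified the correct diagram but not assembled it, so the proposal remains a plan rather than a proof. Your parenthetical alternative---gluing $W$ to a model filling coming from a Seifert surface---is actually closer to the paper's strategy, and your worry about $\pi_1$ of the glued manifold is unfounded: one only needs $H_1(-;\La)=0$ on the pieces and Mayer--Vietoris over $\La_S$, not fundamental-group control of the closed result.
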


\begin{proof}
Most proofs of variants of this, such as in \cite{Borodzik-Friedl-2015-unknotting-number-and-classical-invariants}, assume that $H_1(\partial W;\La)$ is $\La$-torsion.
However Conway~\cite{Conway} works with link exteriors, and shows how to compute the Blanchfield pairing on $H_1(X_L;\La_S)$ in terms of a totally connected $C$-complex, by computing the intersection pairing of the complement $W$ of the $C$-complex pushed in to $D^4$, and relating the homology of $\partial W$ to the homology of $X_L$.   But we have $H_1(X_L;\La_S) \cong H_1(M_L;\La_S)$ by Lemma~\ref{lem:XL-vs-ML}.   In the proof, the only property that Conway uses of the complement $W$ of the totally connected $C$-complex is that $H_1(W;\La)=0$, and $\pi_1(\partial W)\to \pi_1(W) \cong \Z$ is onto. So in fact his proof also proves the statement we want, for a more general $4$-manifold with boundary $M_L$.
\end{proof}

\begin{lemma}\label{lemma:end-pf}
  The rank of $H_1(M_L;\La)$ is $r-1$, the Blanchfield form on  $TH_1(M_L;\La)$ is presented by the Hermitian matrix~$A$, which is of size $2g$ and has $\sigma(A(1))=0$.
\end{lemma}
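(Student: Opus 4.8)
The plan is to assemble the pieces already in place. By Lemma~\ref{lemma:general-homology} applied to $W = W_P$ (which has $H_1(W_P;\La)=0$ by the hypothesis recorded in Remark~\ref{remark:what-we-use}, since $\pi_1(D^4\sm\nu P) \cong \Z\ltimes\Gamma$ with $\Gamma$ perfect, and the gluing of $G\times S^1$ does not change $H_1$ with $\La$ coefficients), together with the preceding lemma identifying $\beta_2(W_P)$, we have $H_2(W_P;\La)\cong\La^{r-1+2g}$ and all other $\La$-homology vanishing except $H_0 \cong \Z$. First I would feed the long exact sequence of the pair $(W_P,M_L)$ with $\La$ coefficients, $H_2(W_P;\La)\xrightarrow{j_*} H_2(W_P,M_L;\La) \to H_1(M_L;\La)\to H_1(W_P;\La)=0$, and use Poincaré--Lefschetz duality $H_2(W_P,M_L;\La)\cong H^2(W_P;\La)$ together with the universal coefficient spectral sequence to identify $j_*$ with the adjoint of the intersection form $\lambda$. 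By Lemma~\ref{lemma:int-form-form}, $\lambda$ is represented by the block matrix with an $(r-1)$-dimensional zero block and a $2g\times 2g$ block $A$ with $\det(A(1))\neq 0$ and $\sigma(A(1))=0$; in particular $A$ is Hermitian, and since $\det(A(1))\neq0$ we have $\det A \neq 0$, so $A$ is nonsingular over $\Q(t)$.

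Next I would apply Lemma~\ref{lemma:algebraic} with $Q = \coker\big(H_2(W_P;\La)\xrightarrow{\lambda} H_2(W_P;\La)^*\big)$, identified via the above with a submodule of $H_1(M_L;\La)$ presented by the block matrix $B$ whose nontrivial part is $A$. Strictly, the map $j_*$ need not be exactly $\lambda$ (there can be an $\overline{\Ext}^1$ term from $H_1(W_P;\La)$, but that vanishes), and one should check that the cokernel of $j_*$ really is $H_1(M_L;\La)$ and that the presentation matrix has the stated block form; this follows because $H_1(W_P;\La)=0$ kills the relevant $\Ext$-contributions in the UCSS, exactly as in the proof of Lemma~\ref{lemma:general-homology}. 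Lemma~\ref{lemma:algebraic}\ref{item:algebraic-iii} then gives $H_1(M_L;\La)\cong\La^{r-1}\oplus TH_1(M_L;\La)$, so the free rank is $r-1$, and part~\ref{item:algebraic-i} gives that $TH_1(M_L;\La)$ is presented by the $2g\times 2g$ matrix $A$, with $\ord(TH_1(M_L;\La)) \doteq \det A$, whence $\ord(TH_1(M_L;\La))(1) \doteq \det(A(1)) \doteq 1$ (using the argument of Lemma~\ref{lemma:int-form-form} that $A(1)$ is nonsingular, combined with the known form of the Seifert-theoretic order from Lemma~\ref{lem:algboundarylinkchar}, or more directly from $\det(V-V^T)=1$).

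Finally, to promote the presentation statement to the Blanchfield \emph{form}: by Theorem~\ref{lem:general-fact-conway}, since $W_P$ is a compact, connected, oriented $4$-manifold with $\partial W_P = M_L$ and $\pi_1(\partial W_P)\to\pi_1(W_P)\cong\Z$ onto (the $\La$-coefficients being determined by the total linking number map $\phi$), the intersection form $\lambda$ of $W_P$ presents the Blanchfield form on $TH_1(M_L;\La_S)$. Combined with the block decomposition of $\lambda$ from Lemma~\ref{lemma:int-form-form}, the $A$-block alone presents $\Bl_L$ on $TH_1(M_L;\La_S)$, and then the localisation argument already used in the proof of Theorem~\ref{thm:2=>1}\ref{item:2=>1 i} (multiplication by $t-1$ is an isomorphism on the torsion module, so isometry over $\La_S$ is equivalent to isometry over $\La$; see \cite[Proposition~A.2]{FellerLewark_19}) upgrades this to a presentation of $\Bl_L$ on $TH_1(M_L;\La)$ by $A$. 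Since $A$ is Hermitian of size $2g$ with $\sigma(A(1))=0$, this is exactly the assertion of the lemma. I expect the main obstacle to be the bookkeeping in the UCSS/duality step identifying the cokernel of the intersection-form adjoint with $H_1(M_L;\La)$ on the nose, including checking that no higher differentials or $\Ext$-terms intervene — but this is controlled entirely by the vanishing $H_1(W_P;\La)=0$, exactly as in Lemma~\ref{lemma:general-homology}, so it is routine rather than deep.
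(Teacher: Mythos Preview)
Your proposal is correct and follows essentially the same approach as the paper: combine Lemma~\ref{lemma:int-form-form} with Lemma~\ref{lemma:algebraic} (via the long exact sequence of $(W_P,M_L)$, Poincar\'e--Lefschetz duality, and the UCSS) to obtain the module decomposition and the presentation of the torsion by $A$, invoke Theorem~\ref{lem:general-fact-conway} for the Blanchfield form over $\La_S$, and then upgrade to $\La$ using $(t-1)$-invertibility on the torsion (via Lemmas~\ref{lem:XL-vs-ML} and~\ref{lem:algboundarylinkchar}), exactly as in the proof of Theorem~\ref{thm:2=>1}. The paper is terser on the duality/UCSS bookkeeping you flag as the main obstacle, but the content is identical; one small wording slip is that $\coker(j_*)$ is all of $H_1(M_L;\La)$ rather than merely a submodule, which you in fact use correctly a sentence later.
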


\begin{proof}
By Lemma~\ref{lemma:int-form-form} and Lemma~\ref{lemma:algebraic}, we deduce that $H_1(M_L;\La) \cong \La^{r-1} \oplus TH_1(M_L;\La)$, where $TH_1(M_L;\La)$ satisfies $\ord TH_1(M_L;\La)(1) = \pm 1$ and is presented by $A$, where  \[\begin{pmatrix}
   0_{(r-1) \times (r-1)} & 0_{(r-1) \times 2g} \\ 0_{2g \times (r-1)} & A_{2g \times 2g}
  \end{pmatrix}\]
  represents the intersection form over $\La$ of the compact, oriented $4$-manifold $W_P$, whose boundary is $M_L$ and with $\pi_1(W_P) \cong \Z$ and $\pi_1(M_L) \to \pi_1(W_P)$ onto.
 It therefore follows from Theorem~\ref{lem:general-fact-conway} that $A$, which is of size $2g$, presents the Blanchfield form on $TH_1(M_L;\La_S)$.
 By Lemma~\ref{lem:XL-vs-ML}, $TH_1(M_L;\La) \cong TH_1(X_L;\La)$, and by Lemma~\ref{lem:algboundarylinkchar} we know that $\ord TH_1(X_L;\La)(1) = \pm 1$. Therefore $\ord TH_1(M_L;\La)(1) = \pm 1$.  As in the proof of Theorem~\ref{thm:2=>1}, this implies that multiplication by $t-1$ induces an isomorphism on $TH_1(M_L;\La)$, so in fact $A$ computes the Blanchfield form over $\La$ as well.
\end{proof}

\section{Applications}\label{sec:proofs-applications}

In this section we prove the applications stated in the introduction. First we recall the notion of the algebraic genus of a knot, and present a corollary to Theorem~\ref{thm:main} about it.

The \emph{algebraic genus} of a knot $K$, denoted by $\ga(K)$, is defined by
\[\ga(K):=\min
\left\{m-n\ \middle| \begin{array}{l}\text{$K$ admits an $2m\times 2m$ Seifert matrix of the form }\begin{pmatrix}
A & * \\
* & *
\end{pmatrix},\\
\text{where $A$ is a } 2n\times 2n \text{ submatrix with }\det(tA-A^T)=t^n\end{array}\right\}.\]
It was proven in \cite[Corollary 1.5]{FellerLewark_19} that $g_\Z(K) = \ga(K),$ and moreover~\cite[Theorem 1.1]{FellerLewark_19} that $\ga(K)$ is equal to the minimal $g$ for which the Blanchfield pairing of $K$ can be presented by a size $2g$ Hermitian matrix $A$ over $\Lambda$ with the signature of $A(1)$ zero. Using the above terminology, we can prove Corollary~\ref{cor:bandsum}. We restate the corollary.

\begin{corollary}\label{cor:gzisgalg}Let $L$ be an $r$-component boundary link and let $K_L$, $K_L'$ be knots, both of which are obtained by performing $r-1$ internal band sums on $L$. Furthermore, suppose that internal bands for $K_L$ are performed disjoint from some collection of disjoint Seifert surfaces for $L$. Then $$g_\Z(L) = g_\Z(K_L) \leq g_\Z(K_L').$$
\end{corollary}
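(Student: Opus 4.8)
The plan is to chain together the three equivalent conditions in Theorem~\ref{thm:main} together with the knot case from \cite{FellerLewark_19}. First I would establish the equality $g_\Z(L) = g_\Z(K_L)$ for the distinguished knot $K_L$ whose internal bands are disjoint from a collection of disjoint Seifert surfaces for $L$. By Theorem~\ref{thm:2=>1}\ref{item:2=>1 i}, the Blanchfield form of $M_L$ on its torsion submodule $TH_1(M_L;\La)$ is isometric to the Blanchfield form of $M_{K_L}$ on $H_1(M_{K_L};\La)$; note that in the construction preceding Theorem~\ref{thm:2=>1}, the knot appearing there is precisely an internal band sum of $L$ performed disjointly from the Seifert surfaces $\{F_i\}$, matching the hypothesis on $K_L$. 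Now combine this with the characterisation of $g_\Z$ for knots: by \cite[Theorem~1.1]{FellerLewark_19}, $g_\Z(K_L)$ equals the minimal $g$ for which $\Bl_{K_L}$ is presented by a size $2g$ Hermitian matrix $A$ over $\La$ with $\sigma(A(1))=0$. By the isometry just described, this minimal $g$ is the same as the minimal $g$ for which $\Bl_L$ on $TH_1(M_L;\La)$ is presented by such a matrix, which by Theorem~\ref{thm:main}\eqref{item:main-1}$\Leftrightarrow$\eqref{item:main-2} equals $g_\Z(L)$. Hence $g_\Z(L) = g_\Z(K_L)$.

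For the inequality $g_\Z(K_L) \leq g_\Z(K_L')$, where $K_L'$ is an arbitrary knot obtained by $r-1$ internal band sums on $L$ (with no disjointness hypothesis), I would argue as follows. An internal band sum is realised by an embedded cobordism in $S^3$: attaching $r-1$ bands to $L$ gives a genus $0$ connected cobordism $C'$ in $S^3$ from $L$ to $K_L'$. Pushing $C'$ into $S^3 \times I$ and stacking it below a minimal-genus $\Z$-surface $\Sigma'$ for $K_L'$ inside $D^4$ produces a connected surface $S' = C' \cup_{K_L'} \Sigma'$ in $D^4$ with $\partial S' = L$ and $\chi(S') = \chi(\Sigma') + \chi(C') - \chi(K_L') $; a short Euler characteristic bookkeeping (the band cobordism $C'$ from an $r$-component link to a knot has $\chi(C') = -(r-1)$, and crushing corrects for the change in boundary component count) shows $\mathrm{genus}(S') = \mathrm{genus}(\Sigma') = g_\Z(K_L')$. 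The fundamental group condition is handled exactly as in the proof of Theorem~\ref{thm:main}~\eqref{item:main-3}$\Rightarrow$\eqref{item:main-1}: since $C'$ is built from $K_L' \times I$ by band moves, $\pi_1(S^3 \sm \nu K_L') \to \pi_1((S^3 \times I) \sm \nu C')$ is surjective, and as $\pi_1(D^4 \sm \nu \Sigma')$ is normally generated by the meridian of $K_L'$, the Seifert--van Kampen theorem forces $\pi_1(D^4 \sm \nu S') \cong \Z$. Thus $S'$ is a $\Z$-surface for $L$ of genus $g_\Z(K_L')$, giving $g_\Z(L) \leq g_\Z(K_L')$; combined with $g_\Z(L) = g_\Z(K_L)$, this yields $g_\Z(K_L) \leq g_\Z(K_L')$.

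The main obstacle I anticipate is the bookkeeping in the first paragraph: one must be careful that the knot $K_L$ constructed in Section~\ref{sec:2=>1} (a collection of push-offs of the components of $L$ banded together along tubes disjoint from the $F_i$) genuinely represents \emph{every} knot obtained by internal band sums disjoint from a given collection of disjoint Seifert surfaces, up to the relevant equivalence. In fact it suffices to observe that any such $K_L$ has $V$ (the Seifert matrix on the symplectic sub-basis) as a Seifert matrix after an appropriate choice of tubed surface $F$, so that Lemma~\ref{lem:algboundarylinkchar} and Theorem~\ref{thm:2=>1} apply verbatim. The second paragraph is routine once the Euler characteristic arithmetic is set up correctly, and the $\pi_1$ argument is a direct transcription of an argument already given in the excerpt.
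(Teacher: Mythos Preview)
Your proposal is correct and follows essentially the same route as the paper: the inequality $g_\Z(L)\le g_\Z(K_L')$ via the genus-zero band cobordism glued to a minimal $\Z$-surface (with the same Seifert--van Kampen argument for $\pi_1$), and the identification $g_\Z(L)=g_\Z(K_L)$ via the Blanchfield isometry of Theorem~\ref{thm:2=>1}\ref{item:2=>1 i} combined with the matrix characterisations in Theorem~\ref{thm:main} and \cite{FellerLewark_19}. The paper organises this as two separate inequalities ($g_\Z(L)\le g_\Z(K_L)$ from the cobordism and $g_\Z(K_L)\le g_\Z(L)$ from the Blanchfield comparison), but the content is identical; your concern about matching the hypothesised $K_L$ to the $K_L$ of Section~\ref{sec:2=>1} is resolved exactly as you indicate, since the bands in the hypothesis serve as the tubes in that construction.
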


\begin{proof}
As in the proof of Theorem~\ref{thm:main}~\eqref{item:main-3} $\Rightarrow$ \eqref{item:main-1}, if $K_L'$ bounds a $\Z$-surface $\Sigma$ of genus $g$, then $L$ also bounds a $\Z$-surface of genus~$g$ obtained by gluing the genus $0$ cobordism from $L$ to $K_L'$ with $\Sigma$. Hence $g_\Z(L) \leq g_\Z(K_L')$ and similarly $g_\Z(L) \leq g_\Z(K_L)$.

By Theorem~\ref{thm:main}, if $L$ bounds a $\Z$-surface of genus $g$, then the torsion part of the Blanchfield form of $M_L$ is presented by a size $2g$ Hermitian square matrix $A$ over $\La$ with the signature of $A(1)$ zero. Moreover, by Theorem~\ref{thm:2=>1}, $A$ also presents the Alexander module of~$K_L$. This implies that $g_\Z(K_L) \leq g_\Z(L)$ and concludes the proof.
\end{proof}

\subsection{Shake genus and generalised cabling}

We start with a reformulation of  the $\Z$-shake genus of a knot $K$, denoted by $\gs(K)$.
Recall that the $\Z$-shake genus of a knot $K$ is the minimal genus of a surface $\Sigma$ representing a generator of $H_2(X_0(K);\Z)$ with $\pi_1(X_0(K) \sm \Sigma) \cong \Z$, generated by a meridian of $\Sigma$.
Also recall that $P_{p,n}(K)$ denotes the generalisation of a cable link obtained by $p+n$ parallel copies of $K$ with pairwise vanishing linking numbers, where $p$-components have the same orientation as $K$ and the remaining $n$-components have the opposite orientation.

\begin{lemma}\label{lem:shake}
 The $\Z$-shake genus of $K$ satisfies
\begin{equation*}
\gs(K)=\min \left\{g_\Z(P_{\ell+1,\ell}(K)) \mid \ell \in \mathbb{N}_0\right\}.
\end{equation*}
\end{lemma}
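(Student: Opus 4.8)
The plan is to prove the two inequalities separately, translating between the 2-handle description of the 0-trace $X_0(K)$ and the 4-ball picture.

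First I would set up the correspondence between surfaces in $X_0(K)$ representing a generator of $H_2(X_0(K);\Z)$ and surfaces in $D^4$ bounded by parallel copies of $K$. Recall $X_0(K) = D^4 \cup_{K\times D^2} D^2\times D^2$. A class $\Sigma$ generating $H_2(X_0(K);\Z)$, after making it transverse to the cocore $\{0\}\times D^2$ of the 2-handle, meets that cocore in some algebraically-coherent collection of points; since $[\Sigma]$ is a generator, the signed count of these intersections is $\pm 1$, and up to orientation we may take it to have $\ell+1$ positive and $\ell$ negative points for some $\ell\ge 0$ (after tubing away cancelling pairs we keep the genus under control, but in general we only get an inequality in one direction — this is the source of the $\min$). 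Removing a neighbourhood of the core $D^2\times\{0\}$ from $X_0(K)$ gives back $D^4$ (with $K$ on its boundary up to isotopy in the surgered $S^3 = \partial X_0(K)\setminus\nu(\text{core})$-picture), and the surface $\Sigma\cap D^4$ becomes a surface in $D^4$ whose boundary is $P_{\ell+1,\ell}(K)$, the $2\ell+1$ parallel copies with the indicated orientations and pairwise linking numbers $0$. Conversely, a $\Z$-surface in $D^4$ for $P_{\ell+1,\ell}(K)$ caps off, using $\ell+1$ meridian discs and $\ell$ oppositely-oriented meridian discs of the 2-handle (equivalently $2\ell+1$ copies of the core disc $D^2\times\{0\}$), to a surface in $X_0(K)$ representing $\pm$ a generator. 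Throughout, the genus is preserved because capping with discs changes neither the genus of a connected surface nor its connectivity once we tube the copies of the core together appropriately.

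Next I would check that the fundamental group condition matches up under this correspondence. On the $X_0(K)$ side we require $\pi_1(X_0(K)\setminus\Sigma)\cong\Z$ generated by a meridian of $\Sigma$; on the $D^4$ side we require $\pi_1(D^4\setminus\nu P)\cong\Z$ generated by a meridian of the surface. Removing the core from $X_0(K)\setminus\Sigma$ and using that the core's meridian is identified with the meridian of $K$, which is a meridian of $P$, a van Kampen argument shows the two $\pi_1$'s agree: attaching the 2-handle $D^2\times D^2$ along $K\times D^2$ away from $\Sigma$ adds one generator (the core) and one relation, and the meridian of the core is isotopic to a meridian of $\Sigma$, so it does not change the quotient down to $\Z$. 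Here I can quote Remark~\ref{remark:what-we-use}, which already flags that the relevant condition is really $H_1(\,\cdot\,;\La)=0$ rather than $\pi_1\cong\Z$, making the bookkeeping cleaner.

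Combining: every $\Z$-surface $\Sigma$ in $X_0(K)$ of genus $g$ yields a $\Z$-surface in $D^4$ for some $P_{\ell+1,\ell}(K)$ of genus $g$, giving $\gs(K)\ge \min_\ell g_\Z(P_{\ell+1,\ell}(K))$; and conversely a $\Z$-surface for $P_{\ell+1,\ell}(K)$ of genus $g$ caps off to a $\Z$-surface in $X_0(K)$ of genus $g$, giving $\gs(K)\le g_\Z(P_{\ell+1,\ell}(K))$ for every $\ell$, hence $\gs(K)\le\min_\ell g_\Z(P_{\ell+1,\ell}(K))$. The main obstacle I expect is the careful geometric argument that, after isotoping $\Sigma$ to meet the cocore in exactly $2\ell+1$ points with $\ell+1$ of one sign and $\ell$ of the other and then removing the core, the resulting boundary link in $\partial D^4$ is genuinely (isotopic to) $P_{\ell+1,\ell}(K)$ with vanishing pairwise linking numbers — one must see that the framing data of the 0-trace forces the $0$-framed parallels and that connectedness of $\Sigma$ can be preserved (or restored by tubing within $D^4$ without raising genus) so that the count of positive minus negative intersections is forced to be $\pm1$ but not necessarily $1$; controlling that the extra cancelling intersection pairs can always be tubed off inside $X_0(K)\setminus\Sigma$ compatibly with the $\pi_1$ condition is the delicate point.
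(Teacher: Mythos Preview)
Your capping-off argument for $\gs(K) \le \min_\ell g_\Z(P_{\ell+1,\ell}(K))$ matches the paper and is fine; the gap is in the reverse inequality, and it is not the geometric issue you flag at the end. The van Kampen computation does \emph{not} show $\pi_1(D^4\setminus\nu P)\cong\Z$. Decomposing $X_0(K)\setminus\nu\Sigma = (D^4\setminus\nu P)\cup (N\setminus\nu\Sigma)$ along the complement in $S^1\times D^2$ of $2k+1$ parallel cores, the piece $N\setminus\nu\Sigma$ has $\pi_1\cong F_{2k+1}$ and the longitude $\lambda$ of $K$ bounds a disc there. The pushout therefore only yields
\[
\pi_1(D^4\setminus\nu P)/\langle\langle\lambda\rangle\rangle \;\cong\; \pi_1(X_0(K)\setminus\nu\Sigma)\;\cong\;\Z.
\]
Since $\lambda$ lies in the second derived subgroup, this forces the commutator subgroup of $\pi_1(D^4\setminus\nu P)$ to be perfect, equivalently $H_1(D^4\setminus\nu P;\La)=0$, but that is strictly weaker than $\pi_1\cong\Z$; the paper in fact poses whether such a $P$ must have cyclic complement as an open question immediately after the proof. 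So the cut-open surface $P$ is not known to be a $\Z$-surface, and you cannot directly read off $g_\Z(P_{k+1,k}(K))\le g$.

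The paper's workaround is substantive, and is precisely what Remark~\ref{remark:what-we-use} is preparing for: using only the weaker hypothesis $H_1(D^4\setminus\nu P;\La)=0$, one runs the full machinery of Section~\ref{sec:1=>2} (building $W_P$, verifying $\pi_1(W_P)\cong\Z$, and computing the $\La$-intersection form) to produce a size~$2g$ Hermitian matrix presenting the Blanchfield form of $M_{P_{k+1,k}(K)}$, and only then invokes Theorem~\ref{thm:main} \eqref{item:main-2}$\Rightarrow$\eqref{item:main-1} to obtain a genuine $\Z$-surface of genus~$g$ for $P_{k+1,k}(K)$. Your appeal to Remark~\ref{remark:what-we-use} is thus on the right track, but it signals an indirect detour through the main theorem, not a mere simplification of bookkeeping.
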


\begin{proof}
First we show that $\gs(K) \geq \min \left\{g_\Z(P_{\ell+1,\ell}(K)) \mid \ell \in \mathbb{N}_0\right\}$. Let $S$ be a locally flat surface of genus $g$ in the $0$-trace, denoted by $X_0(K)$, representing a generator of $H_2(X_0(K);\Z) \cong \Z$, such that $\pi_1(X_0(K)\setminus \nu S)\cong\Z$.
To prove this we construct a $4$-manifold $W$ with $\partial W=M_{P_{k+1,k}(K)}$ for some $k$, using the same construction as in Section~\ref{sec:1=>2}. Moreover $W$ will satisfy that $\pi_1(W) \cong \Z$ and $H_2(W;\La) \cong \La^{2g + 2k}$. The proof of Theorem~\ref{thm:weakly-Z-slice-implies-Alexander-module-condition-bdy-links} will imply that there is a size $2g$ Hermitian square matrix $A$ over $\La$ such that  $A(1)$ has signature $0$ and such that $A$ presents the Blanchfield form of $M_{P_{k+1,k}(K)}$ on $TH_1(M_{P_{k+1,k}(K)};\La)$.  It will then follow that
\[\min \left\{g_\Z(P_{\ell+1,\ell}(K)) \mid \ell \in \Z\right\} \leq g_\Z(P_{k+1,k}(K)) \leq g.\]

 To achieve this, make $S$ transverse to the cocore of the 2-handle, and remove a neighbourhood $N$ of the cocore.
This yields a 4-manifold homeomorphic to $D^4$ with the link $P_{k+1,k}(K) = \partial N\cap S$ in $\partial D^4 = S^3$, for some $k$, extending to a locally flat genus $g$ surface in $D^4$. Since we removed a disjoint union of discs $N\cap S$, the result is a connected genus $g$ surface $P$.

Apply the Seifert-van Kampen theorem to the union
\[X_0(K)\setminus \nu S= (D^4\setminus \nu P)\cup (N\setminus \nu S),\]
where the union is over the complement in $S^1 \times D^2$ of $2k +1$ parallel copies of the core $S^1 \times \{0\}$.
This yields a push out
 \[\xymatrix{F_{2k +1} \times \Z \ar[r] \ar[d] & \pi_1(D^4 \sm \nu P) \ar[d] \\
 F_{2k+1} \ar[r] & \Z,}\] where $F_{2k +1}$ is the free group on $2k+1$ generators.  The $\Z$ in the top left corner is generated by a zero-framed longitude of $K$, while the $\Z$ in the bottom right corner is generated by a meridian of~$P$.
Hence we have that $\pi_1(D^4 \sm \nu P) / \langle \langle \lambda \rangle \rangle \cong \Z$, where $\lambda$ represents a longitude of $K$.  Since $\lambda$ lies in the second derived subgroup, it follows that the commutator subgroup, or first derived subgroup, equals the second derived subgroup, and is therefore perfect.

 Let $G$ be a handlebody with $\partial G = P \cup_{\partial P} \bigcup_{i=1}^{2k+1} D^2$, a closed surface of genus $g$, and then define
\[W = W_P := D^4 \sm \nu P \cup_{P \times S^1} (G \times S^1),\]
 as in Section~\ref{sec:1=>2}.
 As before, choose the framing of the normal bundle of $P$ so that every simple closed curve on $P \times \{\pt\} \subset P \times S^1$ lies in the commutator subgroup of $\pi_1(D^4 \sm \nu P)$.  Since gluing on $G \times S^1$ kills the longitude of each component of $P_{k+1,k}(K)$, it follows that
 \[\pi_1(W) \cong \pi_1(D^4 \sm \nu P) / \langle \langle \lambda,\gamma_1,\dots,\gamma_g \rangle \rangle\]
 where $\lambda$ represents the longitude of $K$ as above, and $\gamma_1,\dots,\gamma_k \subseteq P \times \{\pt\}$ are push offs to the normal circle bundle of curves on $P$ that generate $\ker(H_1(\partial G;\Z) \to H_1(G;\Z))$.  Since we know that $\pi_1(D^4 \sm \nu P) / \langle \langle \lambda \rangle \rangle \cong \Z$, and the $\gamma_i$ lie in the commutator subgroup of $\pi_1(D^4 \sm \nu P)$, we deduce that $\pi_1(W) \cong \Z$.

  In the proof of Theorem~\ref{thm:weakly-Z-slice-implies-Alexander-module-condition-bdy-links}, as noted in Remark~\ref{remark:what-we-use}, it was sufficient to assume that $\pi_1(D^4 \sm \nu P)$ is of the form $\Z \ltimes \Gamma$, where the commutator subgroup $\Gamma$ is perfect, and that $\pi_1(W_P) \cong \Z$.  The proof of Theorem~\ref{thm:weakly-Z-slice-implies-Alexander-module-condition-bdy-links} then implies that there is a size $2g$ Hermitian square matrix $A$ over $\La$ of the required form as in Theorem~\ref{thm:main}~\eqref{item:main-2}. Thence Theorem~\ref{thm:main} implies that $g_\Z(P_{k+1,k}(K)) \leq g$, as required.

For the other inequality, cap off a $\Z$-surface for $P_{\ell+1,\ell}(K)$ with $2\ell +1$ appropriately oriented parallel copies of the core of the 2-handle of $X_0(K)$, to construct a $\Z$-shake surface of genus $g$ in~$X_0(K)$.
\end{proof}

As a tangent, and to point to a subtlety which necessitates the above proof, we ask the following questions.
Does there exist a locally flat surface $P$ in $D^4$, with boundary $P_{\ell+1,\ell}(K) \subseteq S^3$ for some $K,\ell$, such that
$\pi_1(D^4\setminus \nu P)/\langle\langle\lambda\rangle\rangle\cong \Z$, where $\lambda$ represents a longitude of $K$, but for which $\pi_1(D^4\setminus \nu P)$ is not cyclic? A negative answer to this question would imply that every surface whose complement has cyclic fundamental group, representing a generator of $H_2(X_0(K);\Z)$, is isotopic to the union of parallel copies of the core of the 2-handle and a $\Z$-surface in $D^4$.

\begin{figure}[h]
\includegraphics[width=.6\textwidth]{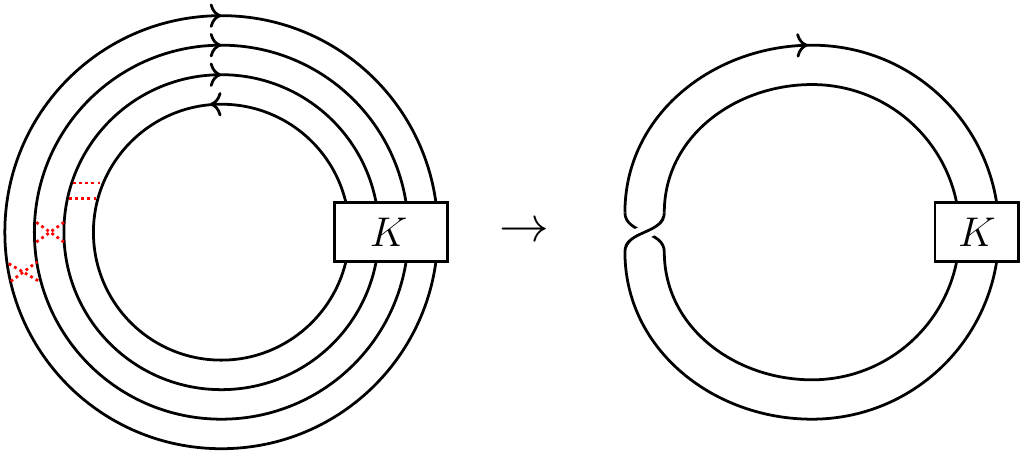}
\caption{Performing $3$ internal band sums on $P_{3,1}(K)$ yields the cabled knot $K_{2,1}$. Solid boxes indicate that all the strands passing through the box are tied into 0-framed parallels of the knot $K$.}\label{fig:shakeg}
\end{figure}

Using Corollary~\ref{cor:gzisgalg}, we prove Theorem~\ref{cor:shake} and Corollary~\ref{cor:parallel}. We prove them together, since the proofs are similar. We recall the statements.

\begin{corollary}\leavevmode
\begin{enumerate}[label=(\emph{\roman*}),font=\upshape]
\item\label{item:shake-i}   For every knot $K$, the $\Z$-genus of $K$ equals the $\Z$-shake genus of $K$.
\item\label{item:shake-ii} Let $p$ and $n$ be integers and let $w=p-n$. If $w=0$, then $P_{p,n}(K)$ is 
$\Z$-weakly slice, and otherwise
$$g_\Z(P_{p,n}(K))=g_\Z(K_{w,1})=g_\Z(K_{w,-1}) \leq g_\Z(K).$$
\end{enumerate}
\end{corollary}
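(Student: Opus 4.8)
The plan is to deduce both statements from the band‑sum results of the previous section: internal band sums on $P_{p,n}(K)$ turn it into cabled knots $K_{w,\pm 1}$, and Corollary~\ref{cor:gzisgalg} (equivalently Corollary~\ref{cor:bandsum}) identifies the $\Z$‑genus of the link with that of these knots. I will prove \ref{item:shake-ii} first and obtain \ref{item:shake-i} afterwards as the case $w=1$ together with Lemma~\ref{lem:shake}.

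For \ref{item:shake-ii} I first observe that $P_{p,n}(K)$ is a boundary link: $p+n$ mutually disjoint parallel push‑off copies of a Seifert surface for $K$ (the $n$ copies carrying the opposite orientation bounding the same surfaces with reversed orientation) form a disjoint Seifert surface system. Reversing all orientations carries $P_{p,n}(K)$ to $P_{n,p}(K)$ and leaves the $\Z$‑genus unchanged, while it carries $K_{a,b}\subseteq\nu K$ to $K_{-a,-b}$ and hence interchanges the values $g_\Z(K_{w,1})$ and $g_\Z(K_{w,-1})$; so the statement for $(p,n)$ is equivalent to the one for $(n,p)$, and we may assume $w=p-n\ge 0$. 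The geometric heart of the argument is then the claim that, for each sign $\varepsilon\in\{+1,-1\}$, there is a system of $p+n-1$ internal bands on $P_{p,n}(K)$, disjoint from a suitable parallel Seifert surface system, whose band sum is the cable knot $K_{w,\varepsilon}$ (and the unknot when $w=0$, since $K_{0,\pm 1}$ is a meridian of $K$). Concretely, one banding $n$ oppositely oriented adjacent copies together first — each such band, run in the annulus cobounded by the two copies, deletes a pair — and then bands the remaining $w$ coherently oriented copies together; routing these latter bands so as to avoid the $p+n$ sheets of the Seifert surface system forces the twisting that supplies the meridional winding $\varepsilon$, with the two routings around the sheets producing $+1$ and $-1$ respectively. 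This is the band configuration indicated schematically in Figure~\ref{fig:shakeg} for $P_{3,1}(K)$, and the general case is the evident extension.

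Granting this, Corollary~\ref{cor:gzisgalg} applies with $L=P_{p,n}(K)$ and $K_L=K_{w,\varepsilon}$ for either $\varepsilon$, yielding $g_\Z(P_{p,n}(K))=g_\Z(K_{w,1})=g_\Z(K_{w,-1})$. If $w=0$ this common value equals $g_\Z(\text{unknot})=0$, so $P_{p,p}(K)$ is $\Z$‑weakly slice. If $w\ne 0$, the remaining inequality $g_\Z(K_{w,1})\le g_\Z(K)$ is supplied by \cite[Theorem~1.2]{Feller-Miller-Pinzon-Caicedo:2019-1} and \cite[Theorem~4]{McCoy:2019-1}. Finally \ref{item:shake-i} follows: by Lemma~\ref{lem:shake}, $\gs(K)=\min_{\ell\ge 0}g_\Z(P_{\ell+1,\ell}(K))$, and applying \ref{item:shake-ii} with $p=\ell+1$, $n=\ell$, so $w=1$, gives $g_\Z(P_{\ell+1,\ell}(K))=g_\Z(K_{1,1})=g_\Z(K)$ because the $(1,1)$‑cable of $K$ is $K$ itself; hence $\gs(K)=g_\Z(K)$ (which also re‑derives the trivial bound $\gs(K)\le g_\Z(K)$ obtained by capping a $\Z$‑surface with the core of the $2$‑handle).

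The step I expect to be the main obstacle is the geometric one: checking carefully that the bands realizing $K_{w,\varepsilon}$ can be taken disjoint from a disjoint Seifert surface system for $P_{p,n}(K)$, that the framing bookkeeping of the avoidance routing produces exactly the $(w,\pm 1)$‑cable (and the unknot when $w=0$), and that all orientations are arranged so that the band sums are orientation‑coherent.
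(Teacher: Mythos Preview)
Your proposal is correct and follows essentially the same route as the paper: prove \ref{item:shake-ii} first by observing that $P_{p,n}(K)$ is a boundary link via parallel Seifert surfaces, band-sum it (disjointly from those surfaces) into $K_{w,\pm 1}$ as in Figure~\ref{fig:shakeg}, apply Corollary~\ref{cor:gzisgalg}, and cite \cite{Feller-Miller-Pinzon-Caicedo:2019-1,McCoy:2019-1} for the final inequality; then deduce \ref{item:shake-i} from Lemma~\ref{lem:shake} and the case $w=1$, using that $K_{1,1}=K$. Your explicit reduction to $w\ge 0$ via orientation reversal and your flagging of the band/framing verification are slightly more careful than the paper's treatment, but the argument is the same.
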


\begin{proof}
We prove~\ref{item:shake-ii} first.
Note that $P_{p,n}(K)$ is a $(p+n)$-component boundary link, and $(p+n)$ disjointly embedded Seifert surfaces are obtained by taking parallel copies of a Seifert surface for $K$ with appropriate orientations. Let $w=p-n$.
Perform $w-1=p+n-1$ internal band sums on~$L$ to obtain the unknot if $w=0$, and the knot $K_{w,1}$ if $w \neq 0$ (see Figure~\ref{fig:shakeg}).
Similarly, we can also construct $K_{w,-1}$ by performing $w=p+n-1$ internal band sums on~$L$.
It follows that $P_{p,n}(K)$ is $\Z$-weakly slice if $w=0$.
For $w \neq 0$, by Corollary~\ref{cor:gzisgalg}, in particular the equality $g_\Z(L) = g_{\Z}(K)$, where the knot $K$ is obtained from the $r$-component link $K$ by $r-1$ internal band sums away from a collection of disjoint Seifert surfaces for~$L$,
we conclude that $g_\Z(P_{p,n}(K))=g_\Z(K_{w,1})=g_\Z(K_{w,-1})$. The fact that $g_\Z(K_{w,1}) \leq g_\Z(K)$ follows from \cite[Theorem 1.2]{Feller-Miller-Pinzon-Caicedo:2019-1} and \cite[Theorem 4]{McCoy:2019-1}.  This completes the proof of Corollary~\ref{cor:parallel}.

Now we prove~\ref{item:shake-i}, which is Theorem~\ref{cor:shake}. By Lemma~\ref{lem:shake} and~\ref{item:shake-ii}, we have $$\gs(K)=\min \left\{g_\Z(P_{\ell+1,\ell}(K)) \mid \ell \in \Z\right\} = g_\Z(P_{1,1}(K)) = g_\Z(K).$$ This completes the proof of Theorem~\ref{cor:shake}.
\end{proof}

\subsection{Good boundary links}

Next we prove Corollary~\ref{cor:goodboundary}, whose statement we recall.

\begin{corollary}
Every good boundary link is $\Z$-weakly slice.
\end{corollary}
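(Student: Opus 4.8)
The plan is to derive this from Corollary~\ref{cor:weakly-slice-torsion-free}. First I would observe that a good boundary link $L$ is in particular a boundary link: the homomorphism $\theta\colon\pi_1(X_L)\twoheadrightarrow F_r$ sending meridians to generators is exactly the epimorphism appearing in the definition of a boundary link. So by Corollary~\ref{cor:weakly-slice-torsion-free} it suffices to show that the Alexander module $H_1(X_L;\La)$ is torsion-free; in fact I would show $H_1(X_L;\La)\cong\La^{r-1}$, which by Lemma~\ref{lem:algboundarylinkchar} is as small as it can be.

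The key observation is that the total linking number homomorphism $\phi\colon\pi_1(X_L)\to\Z$ factors as $\phi=\rho\circ\theta$, where $\rho\colon F_r\to\Z$ sends each generator to $1$: both $\phi$ and $\rho\circ\theta$ send every meridian to $1$, and since meridians generate $H_1(X_L;\Z)\cong\Z^r$ the two homomorphisms to $\Z$ agree. Hence $\ker\theta\subseteq\ker\phi$, and there is a tower of regular covers $\h{X}_L\to\wt{X}_L\to X_L$ with $\pi_1(\wt{X}_L)=\ker\phi$ and $\pi_1(\h{X}_L)=\ker\theta$. Thus $H_1(\wt{X}_L;\Z)=H_1(X_L;\La)$ as $\La$-modules, while $H_1(\h{X}_L;\Z)=0$ because $\ker\theta$ is perfect (and $H_0(\h{X}_L;\Z)=\Z$). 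The intermediate cover $\h{X}_L\to\wt{X}_L$ is regular with deck group $N:=\ker\phi/\ker\theta$; under the isomorphism $\pi_1(X_L)/\ker\theta\cong F_r$ this is the kernel of the homomorphism $F_r\to\Z$ sending each generator to $t$, so $N$ is a free group, and the conjugation action of $\pi_1(X_L)/\ker\phi\cong\Z$ on $N$ induces the relevant $\La$-action on $H_1(N;\Z)$.

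Next I would run the Cartan--Leray spectral sequence $H_p(N;H_q(\h{X}_L;\Z))\Rightarrow H_{p+q}(\wt{X}_L;\Z)$ for the covering $\h{X}_L\to\wt{X}_L$ (equivalently, use the classifying map $\wt{X}_L\to BN$ with homotopy fibre $\h{X}_L$, which has $H_0=\Z$ and $H_1=0$). Since $H_1(\h{X}_L;\Z)=0$ this yields an isomorphism of $\La$-modules $H_1(X_L;\La)=H_1(\wt{X}_L;\Z)\cong H_1(N;\Z)$. Finally I would compute $H_1(N;\Z)$ as a $\La$-module: since $N$ is free, $H_1(N;\Z)$ with its $\Z\cong F_r/N$-action is the first homology of the infinite cyclic cover of $\bigvee^r S^1$ associated to $F_r\to\Z$, $x_i\mapsto t$, whose cellular $\La$-chain complex is the two-term complex $\La^r\xrightarrow{\,\partial\,}\La$ with $\partial(a_1,\dots,a_r)=(t-1)\sum_i a_i$. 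As $\La$ is an integral domain this gives $H_1(N;\Z)\cong\{(a_1,\dots,a_r)\in\La^r:\sum_i a_i=0\}\cong\La^{r-1}$, a free and hence torsion-free $\La$-module. Therefore $H_1(X_L;\La)\cong\La^{r-1}$ is torsion-free, and Corollary~\ref{cor:weakly-slice-torsion-free} shows $L$ is $\Z$-weakly slice.

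The step I expect to require the most care is the bookkeeping of $\La$-module structures: one must check that the isomorphism $H_1(\wt{X}_L;\Z)\cong H_1(N;\Z)$ coming from the spectral sequence is $\La$-equivariant, where on the left $\La$ acts by deck transformations of $\wt{X}_L$ and on the right by the conjugation action of $F_r/N\cong\Z$ on $H_1(N;\Z)$, and that this action is precisely the one appearing in the wedge-of-circles computation. Everything else---the identification of $N$ with $\ker(F_r\to\Z)$, the use of perfectness to get $H_1(\h{X}_L;\Z)=0$, and the chain-level computation of $H_1(N;\Z)$---is routine.
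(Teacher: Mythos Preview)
Your proof is correct and follows essentially the same line as the paper's: both reduce to Corollary~\ref{cor:weakly-slice-torsion-free} by showing $H_1(X_L;\La)\cong\La^{r-1}$ via a spectral sequence that exploits the vanishing of $H_1$ with $\Z[F_r]$ coefficients (equivalently, perfectness of $\ker\theta$) and the identification $\Tor_1^{\Z F_r}(\Z,\La)\cong H_1(\vee^r S^1;\La)\cong\La^{r-1}$. The only cosmetic differences are that the paper works with $M_L$ rather than $X_L$ and phrases the spectral sequence as the change-of-rings universal coefficient spectral sequence rather than Cartan--Leray; these are equivalent packagings of the same computation.
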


\begin{proof}
By Corollary~\ref{cor:weakly-slice-torsion-free}, every boundary link $L$ with $TH_1(M_L;\La)=0$ is 
$\Z$-weakly slice.
  Let $L$ be an $r$-component good boundary link and let $F$ be the free group on $r$ generators.
  Then $H_1(M_L;\Z F) = 0$. We apply the universal coefficient spectral sequence~\cite[Theorem~10.90]{Rotman:2009} with $E_2$ page $\Tor_p^{\Z F}(H_q(M_L;\Z F),\La)$, computing $H_{p+q}(M_L;\La)$. Since \[H_1(M_L;\Z F) \otimes_{\Z F} \La \cong \Tor_0^{\Z F}(H_1(M_L;\Z F),\La) =0,\] we have:
  \[H_1(M_L;\La) \cong \Tor_1^{\Z F}(H_0(M_L;\Z F),\La) \cong H_1(\vee^r S^1;\La) \cong \La^{r-1}.\]
 Since $\La^{r-1}$ is free, $TH_1(M_L;\La)=0$, so $L$ is $\Z$-weakly slice by Corollary~\ref{cor:weakly-slice-torsion-free}.
\end{proof}

\subsection{Whitehead doubles}

Finally we prove Corollary~\ref{cor:Wh}. Here is the statement.

\begin{corollary}\label{cor:Wh-main-text}
If $L=L_1 \cup L_2$ is a $2$-component link, then
\[g_\Z(\Wh(L))=\left\{
\begin{array}{cl}
0&\text{if $\lk(L_1,L_2)=0$},\\
1&\text{otherwise.}\end{array}\right.\]
Moreover, if $L=L_1 \cup L_2 \cup L_3$ is a $3$-component link, then $\Wh(L)$ is 
$\Z$-weakly slice if and only if either $($i$)$ $L$ has vanishing linking numbers, or $($ii$)$ for some $i,j,k$ with $\{i,j,k\}=\{1,2,3\}$:
\begin{enumerate}[label=(\emph{\alph*}),font=\upshape]
  \item the signs of the clasps of $\Wh(L_i)$ and $\Wh(L_j)$ disagree,
  \item $\lk(L_i,L_j)=0$,  and
  \item $|\lk(L_i,L_k)|=|\lk(L_j,L_k)|.$
\end{enumerate}
\end{corollary}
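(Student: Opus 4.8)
The plan is to reduce the corollary to the computation of the torsion of the Alexander module of $\Wh(L)$, invoking Corollary~\ref{cor:weakly-slice-torsion-free}: a Whitehead double of a link with vanishing linking numbers is a good boundary link, hence $\Z$-weakly slice by Corollary~\ref{cor:goodboundary}; so the content is entirely in the case of nonzero linking numbers, where one must compute $TH_1(M_{\Wh(L)};\La)$ and decide when it vanishes, and in the $2$-component nonzero case also produce an explicit genus $1$ $\Z$-surface via Theorem~\ref{thm:main}\eqref{item:main-3}. First I would record that $\Wh(L)$ is always a boundary link: each $\Wh(L_i)$ bounds an obvious genus $1$ Seifert surface $F_i$ (a once-punctured torus realising the clasp), and these can be taken mutually disjoint because the doubling is untwisted and the $F_i$ live in disjoint tubular neighbourhoods of the $L_i$ together with a single band following $L_i$; linking of the band of $F_i$ with $F_j$ contributes $\pm\lk(L_i,L_j)$ with a sign governed by the clasp sign, so the $F_i$ can be made disjoint after a boundary-connected-sum-type adjustment only when the geometric intersections cancel — this is exactly where conditions (a)--(c) will enter.

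Next I would compute the Alexander module from a Seifert matrix. Tubing the $F_i$ together along $r-1$ tubes gives a connected genus $r$ Seifert surface $F$ for $\Wh(L)$, with $H_1(F;\Z)$ of rank $r-1+2r=3r-1$: the $r-1$ tube meridians, plus for each $i$ a symplectic pair $(a_i,b_i)$ on $F_i$ with $a_i$ the core of the clasp (self-linking $\eps_i\in\{0,-1\}$ depending on clasp sign, after the standard normalisation) and $b_i$ a meridional curve of the band. The Seifert form restricted to the $(a_i,b_i)$ blocks is the standard Whitehead-double block $\begin{pmatrix} \eps_i & 1\\ 0 & 0\end{pmatrix}$ (or its clasp-sign variant), and $\lk(b_i,a_j)$ (or $\lk(b_i,b_j)$) picks up $\pm\lk(L_i,L_j)$ according to clasp signs. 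By Lemma~\ref{lem:algboundarylinkchar}\ref{item:algebraic--iii}, $TH_1(M_{\Wh(L)};\La)$ is presented by $tV-V^T$ where $V$ is the $2r\times 2r$ block coming from the $(a_i,b_i)$; I would then compute $\det(tV-V^T)$, which factors into one Alexander-polynomial-of-a-Whitehead-double factor per component (of the form $nt-(n-1)$ or $\pm 1$) times cross-terms from the linking numbers. The $2$-component case: with clasp signs $\eps_1,\eps_2$ and $\ell=\lk(L_1,L_2)$ one gets $TH_1\cong 0$ iff $\ell=0$ (recovering the good boundary link case), and otherwise $TH_1$ is a nonzero torsion module presented by a $2\times 2$ Hermitian matrix $A$ with $\sigma(A(1))=0$ and $\det(A(1))=\pm 1$ — so Theorem~\ref{thm:main}\eqref{item:main-2} gives $g_\Z=1$ (and $g_\Z\le 1$ since $\Wh(L)$ bounds an obvious genus $1$ surface), proving the $2$-component formula.

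For the $3$-component case, the module $TH_1(M_{\Wh(L)};\La)$ is presented by a $6\times 6$ matrix $tV-V^T$ whose off-diagonal $2\times 2$ blocks involve $\lambda_{ij}:=\lk(L_i,L_j)$ weighted by clasp signs $\eps_i\eps_j$. I would determine exactly when this module is torsion-free: if all $\lambda_{ij}=0$ we are in the vanishing-linking-numbers case; otherwise, after row/column operations over $\La$ the presentation matrix reduces, and torsion-freeness forces the surviving diagonal-type entries to be units, which happens precisely when (up to relabelling) $\eps_i\eps_j<0$, $\lambda_{ij}=0$, and $|\lambda_{ik}|=|\lambda_{jk}|$ — conditions (a), (b), (c). The key linear-algebra lemma to isolate is: over $\La$, the $6\times 6$ matrix built from the three $2\times 2$ Whitehead blocks and the three weighted linking blocks has torsion cokernel iff that numerical criterion holds; I expect the main obstacle to be this determinant/Smith-normal-form bookkeeping over $\La$, keeping careful track of the clasp signs and the signs of the linking numbers (which is why the hypothesis is stated with $|\lambda_{ik}|=|\lambda_{jk}|$ rather than a signed equality). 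Once the numerical criterion is pinned down, the "if" direction follows from Corollary~\ref{cor:weakly-slice-torsion-free} and the "only if" direction from the fact that a nonzero torsion submodule obstructs $\Z$-weak sliceness by the same corollary.
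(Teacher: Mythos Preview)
Your overall strategy---write down the Seifert matrix for $\Wh(L)$ coming from the disjoint genus-one surfaces, pass to $tV-V^T$, and determine when the torsion vanishes---is exactly what the paper does (phrased there via Corollary~\ref{cor:gzisgalg} and the algebraic genus of the banded-together knot $K_L$, which amounts to the same computation). Two points need correcting.

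First, a confusion in the setup: the standard genus-one Seifert surfaces $F_i$ for the Whitehead doubles are \emph{always} mutually disjoint, for any linking numbers---each $F_i$ sits inside a tubular neighbourhood of $L_i$. The conditions (a)--(c) do not arise from any obstruction to disjointness; they come purely from the determinant computation. Relatedly, your description of the off-diagonal blocks of $V$ is not right: with the natural basis (both generators on $F_i$ are isotopic to $L_i$ in $S^3$), every entry of the $(i,j)$ off-diagonal $2\times 2$ block is $\lk(L_i,L_j)$, and the clasp sign enters only in the diagonal block as $\begin{pmatrix}0&a_i\\0&0\end{pmatrix}$ with $a_i\in\{\pm1\}$. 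This matters for the $3$-component determinant, whose leading coefficients are $-2n_1n_2n_3a_1a_2a_3$ and $12n_1n_2n_3a_1a_2a_3-\sum n_i^2 a_ja_k$; the criterion (a)--(c) drops out of setting these to zero.

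Second, and this is the genuine gap: in the $2$-component case with $\ell\neq 0$ you assert $g_\Z(\Wh(L))\le 1$ because ``$\Wh(L)$ bounds an obvious genus $1$ surface'' and because $TH_1$ is ``presented by a $2\times 2$ Hermitian matrix''. Neither is justified. Tubing the two genus-one Seifert surfaces gives a \emph{genus-two} connected surface; there is no obvious genus-one surface. And the presentation of $TH_1$ you have in hand is the $4\times 4$ matrix $tV-V^T$; Theorem~\ref{thm:main}\eqref{item:main-2} requires you to produce a size-$2$ Hermitian presentation of the Blanchfield form, which is precisely what is at stake. The paper closes this gap via the algebraic-genus characterisation: the $4\times 4$ Seifert matrix contains the $2\times 2$ submatrix $\begin{pmatrix}0&a_1\\0&0\end{pmatrix}$ with $\det(tA-A^T)=t$, so $\galg(K_L)\le 1$ and hence $g_\Z(\Wh(L))\le 1$ by Corollary~\ref{cor:gzisgalg}. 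You should either reproduce this step or give an explicit $2\times 2$ Hermitian $\La$-matrix presenting $\Bl_{\Wh(L)}$.
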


\begin{proof}
 Suppose $L=L_1 \cup L_2$ is a $2$-component links with $\lk(L_1,L_2)=n$. Let $F_1, F_2$ be the standard disjoint genus 1 Seifert surfaces for $\Wh(L)$. Then by performing an internal band sum, where the band does not intersect $F_1$ and $F_2$, we get a knot $K$ with a genus two Seifert surface and Seifert matrix
\[M=\begin{pmatrix}
    0 & a_1 & n& n \\
    0 & 0 & n& n \\
    n & n & 0& a_2 \\
    n & n & 0& 0 \\
  \end{pmatrix},\]
where $a_1, a_2 \in \{1, -1\}$. By Corollary~\ref{cor:gzisgalg}, we have $g_\Z(\Wh(L)) = \ga(K).$
The computation
$$\det(tM-M^T)=\sum_{i=0}^{2} c_i t^i + (4 a_1 a_2 n^2)\cdot t^3 +(-a_1a_2n^2)\cdot t^4$$
implies that $g_\Z(\Wh(L))=0$ if and only if $n=0$. Moreover, note that there is a $2\times 2$ submatrix
$$A= \begin{pmatrix}
    0 & a_1 \\
    0 & 0 \end{pmatrix} \text{ so that } \det(tA-A^T) = t.$$
    Hence, if $n\neq 0$, then $g_\Z(\Wh(L))=1$.

    Now, suppose $L=L_1 \cup L_2 \cup L_3$ is a $3$-component links with $\lk(L_1,L_2)=n_3, \lk(L_1,L_3)=n_2$, and $\lk(L_1,L_3)=n_1$. Again, performing two internal band sums, where the bands do not intersect the standard  disjoint Seifert surfaces, we obtain a knot $K$ with a genus three Seifert surface and a Seifert matrix
\[M=\begin{pmatrix}
    0 & a_1 & n_3& n_3& n_2& n_2 \\
    0 & 0 & n_3& n_3& n_2& n_2 \\
    n_3 & n_3 & 0& a_2& n_1& n_1  \\
    n_3 & n_3 & 0& 0& n_1& n_1  \\
    n_2 & n_2 & n_1& n_1& 0& a_3  \\
    n_2 & n_2 & n_1& n_1& 0& 0
  \end{pmatrix},\]
where $a_1, a_2, a_3 \in \{1, -1\}$. Again, we have $g_\Z(\Wh(L)) = \ga(K)$.

A straightforward computation yields that
$$\det(tM-M^T)=\sum_{i=0}^{4} c_it^i+(12 n_1n_2n_3a_1a_2a_3- n_1^2a_2a_3 - n_2^2 a_1a_3 - n_3^2a_1a_2)\cdot t^5+(-2n_1n_2n_3 a_1a_2a_3)\cdot t^6.$$ Note that $g_\Z(\Wh(L))=\ga(K)=0$ if and only if $\det(tM-M^T)=t^3$, and that this implies either $n_1=n_2=n_3=0$ or
$$a_i=-a_j, \quad n_k=0, \quad\text{and}\quad |n_i|=|n_j| \quad\text{for}\quad \{i,j,k\}=\{1,2,3\}.$$
Furthermore, it can be easily verified that the above assumptions imply that $\det(tM-M^T)=t^3$. This completes the proof of Corollary~\ref{cor:Wh}.
\end{proof}

\bibliographystyle{alpha}
\bibliography{research}
\end{document}